\renewcommand{\tocsection}[3]{%
  \indentlabel{\@ifnotempty{#2}{\bfseries\ignorespaces#1 #2\quad}}\bfseries#3}
\renewcommand{\tocsubsection}[3]{%
  \indentlabel{\@ifnotempty{#2}{\ignorespaces#1 #2\quad}}#3}
\newcommand\@dotsep{4.5}
\def\@tocline#1#2#3#4#5#6#7{\relax
  \ifnum #1>\c@tocdepth 
  \else
    \par \addpenalty\@secpenalty\addvspace{#2}%
    \begingroup \hyphenpenalty\@M
    \@ifempty{#4}{%
      \@tempdima\csname r@tocindent\number#1\endcsname\relax
    }{%
      \@tempdima#4\relax
    }%
    \parindent\z@ \leftskip#3\relax \advance\leftskip\@tempdima\relax
    \rightskip\@pnumwidth plus1em \parfillskip-\@pnumwidth
    #5\leavevmode\hskip-\@tempdima{#6}\nobreak
    \leaders\hbox{$\m@th\mkern \@dotsep mu\hbox{.}\mkern \@dotsep mu$}\hfill
    \nobreak
    \hbox to\@pnumwidth{\@tocpagenum{\ifnum#1=1\bfseries\fi#7}}\par
    \nobreak
    \endgroup
  \fi}
\renewcommand\csname r@tocindent0\endcsname{0pt}
\def\l@subsection{\@tocline{2}{0pt}{2.5pc}{5pc}{}}
\newcommand{\R}{\mathbb{R}}
\newcommand{\Z}{\mathbb{Z}}
\newcommand{\T}{\mathbb{T}}
\newcommand{\E}{\mathcal{E}}
\newcommand{\bR}{\bm{R}}
\newcommand{\bu}{\bm{u}}
\newcommand{\bx}{\bm{x}}
\newcommand{\X}{\bm{X}}
\newcommand{\Y}{\bm{Y}}
\newcommand{\be}{\bm{e}}
\newcommand{\bv}{\bm{v}}
\newcommand{\p}{\partial}
\renewcommand{\div}{{\rm{div}\,}}
\newcommand{\abs}[1]{\left\lvert #1 \right\rvert}
\newcommand{\norm}[1]{\left\lVert #1 \right\rVert}
\newcommand{\wh}[1]{\widehat{#1}}
\newcommand{\wt}[1]{\widetilde{#1}}
\newcommand{\mc}[1]{\mathcal{#1}}
\newtheorem{theorem}{Theorem}[section]
\newtheorem{lemma}[theorem]{Lemma}
\newtheorem{proposition}[theorem]{Proposition}
\newtheorem{corollary}[theorem]{Corollary}
\theoremstyle{definition}
\newtheorem{remark}{Remark}
\begin{document}
\title{The slender body free boundary problem}

\author{Laurel Ohm}
\address{Department of Mathematics, University of Wisconsin - Madison, Madison, WI 53706}
\email{lohm2@wisc.edu}

\begin{abstract} 
We consider the slender body free boundary problem describing the evolution of an inextensible, closed elastic filament immersed in a Stokes fluid in $\R^3$. The filament elasticity is governed by Euler-Bernoulli beam theory, and the coupling between this 1D elasticity law and the surrounding 3D fluid is governed by the slender body Neumann-to-Dirichlet (NtD) map, which treats the filament as a 3D object with constant cross-sectional radius $0<\epsilon\ll1$. This map serves as a mathematical justification for slender body theories wherein such 3D-1D couplings play a central role. We develop a solution theory for the filament evolution under this coupling. 
Our analysis relies on two main ingredients: (1) an extraction of the principal symbol of the slender body NtD map, from the author's previous work, and (2) a detailed treatment of the tension determination problem for enforcing the inextensibility constraint.
Our work provides a mathematical foundation for various computational models in which a slender filament evolves according to a 1D elasticity law in a 3D fluid. This forms a key development in our broader program to place slender body theories on firm theoretical footing.  
\end{abstract}

\maketitle

\tableofcontents

\setlength{\parskip}{8pt}
\section{Introduction}
We consider the evolution of an inextensible, closed elastic filament immersed in a Stokes fluid in $\R^3$. The filament elasticity is governed by Euler-Bernoulli beam theory, and the coupling between this 1D elasticity law and the surrounding 3D fluid is governed by the \emph{slender body Neumann-to-Dirichlet (NtD) map}, introduced by the author along with Mori and Spirn in \cite{closed_loop,free_ends}, with further developments in \cite{inverse,ohm2025angle,ohm2024free}.
In this coupling, at each time, the filament is considered as a 3D object with constant cross-sectional radius $0<\epsilon\ll1$\,.   

In particular, letting $\T:=\R^3/\Z$, the filament centerline is taken to be a closed, unit-length space curve $\X(s,t):\T\times[0,T]\to \R^3$, parameterized by arclength. Using the subscript $(\cdot)_s$ to denote differentiation in arclength $s$, let $\X_s(s,t)^\perp$ denote the plane perpendicular to $\X$ at cross section $s$ and time $t$.  
For $0<\epsilon\ll1$ sufficiently small (to be made more precise below), we define
\begin{equation}\label{eq:SigmaEps}
  \Sigma_\epsilon(t) = \big\{ \bx\in \X_s(s,t)^\perp\,:\, {\rm dist}(\bx,\X(s,t))<\epsilon\,, \, s\in \T \big\}\,.
\end{equation}
It will often be convenient to think of the surface $\p\Sigma_\epsilon$ as a function of arclength $s$ and angle $\theta$ about the cross section (see figure \ref{fig:filament}). 
To ensure that the geometry of $\Sigma_\epsilon$ makes sense, in particular, that each point $\bx$ in $\Sigma_\epsilon$ belongs to a uniquely determined cross section parameterized by $s$, we will require the following non-self-intersection condition on the curve $\X$. At each time, we define 
\begin{equation}\label{eq:star_norm}
\abs{\X(\cdot,t)}_\star := \inf_{s\neq s'} \frac{\abs{\X(s,t)-\X(s',t)}}{\abs{s-s'}} >0
\end{equation}
and require that this quantity remains positive. When considering the filament shape at a fixed time, we will typically drop the time dependence from our notation and will denote $\abs{\X}_\star=:c_\Gamma>0$. At any fixed time, given a closed $C^2$ curve satisfying \eqref{eq:star_norm}, there exists some maximal radius
\begin{equation}\label{eq:rstar}
  0<r_\star = r_\star(c_\Gamma,\norm{\X_{ss}}_{L^\infty(\T)}) \le \min\bigg\{\frac{c_\Gamma}{2}, \frac{1}{2\norm{\X_{ss}}_{L^\infty(\T)}}\bigg\}
\end{equation}
such that each point $\bx$ satisfying ${\rm dist}(\bx,\X)<r_\star$ belongs to a unique cross section $\X_s(s)^\perp$. We will require $\epsilon\le \frac{r_\star}{4}$ in the definition \eqref{eq:SigmaEps}.

Let $(\bu,p):\R^3\backslash\overline{\Sigma_\epsilon}(t) \to \R^3\times \R$ denote the velocity and pressure fields of a Stokes fluid in $\R^3$, and let $\bm{\sigma}[\bu]=\nabla\bu+\nabla\bu^{\rm T}-p{\bf I}$ denote the fluid stress tensor. 
We define the \emph{slender body free boundary problem} as the evolution problem for the filament centerline $\X(s,t)$ according to 
\begin{align}
-\Delta \bu + \nabla p &=0\,, \quad \div\bu=0  \hspace{2.9cm} \text{in }\R^3\backslash\overline{\Sigma_\epsilon}(t)  \label{eq:SB_FBP1}\\
\int_0^{2\pi}(\bm{\sigma}[\bu]{\bm n})\,\mc{J}_\epsilon(s,\theta,t)\,d\theta &= -\X_{ssss}(s,t)+(\tau(s,t)\X_s)_s \qquad \text{on } \p\Sigma_\epsilon(t)
\label{eq:SB_FBP2}\\
\bu\big|_{\p\Sigma_\epsilon(t)} &= \frac{\p\X}{\p t}(s,t) \label{eq:SB_FBP3}\\
\frac{\p\X_s}{\p t}\cdot\X_s &=0  \label{eq:SB_FBP4}
\end{align}
and $\abs{\bu}\to0$ as $\abs{\bx}\to\infty$.
Here $\mc{J}_\epsilon$ in \eqref{eq:SB_FBP2} is a Jacobian factor corresponding to the surface element on $\p\Sigma_\epsilon$, $\bm{n}$ is the unit normal vector to $\p\Sigma_\epsilon$, and the integral is over the filament surface at each fixed cross section $s$. This angle-averaged condition arises because the no-slip condition \eqref{eq:SB_FBP3} at the filament surface involves a geometric constraint: at each time, we consider the fluid velocity field at the filament surface to be a function of arclength only.\footnote{We understand this as an evolution law for the filament centerline rather than the filament surface. In particular, at each time, the boundary value problem \eqref{eq:SB_PDE} is solved and the centerline $\X$ is updated accordingly. We assume that the 3D body $\Sigma_\epsilon$ follows the centerline shape such that, at each time, the definition \eqref{eq:SigmaEps} is maintained. 
In this way, $\Sigma_\epsilon(t)$ may be considered as a regularization of a literal 1D curve, where the regularization is such that a Stokes boundary value problem is exactly satisfied about $\Sigma_\epsilon$ at each time.} 
These boundary conditions are explained in more detail in section \ref{subsec:SB_NTD}. The driver of the filament evolution is the forcing on the right hand side of \eqref{eq:SB_FBP2}, which comes from Euler-Bernoulli beam theory and models a simple elastic response of the filament to deformation. The scalar function $\tau(s,t)$ is a Lagrange multiplier enforcing the local inextensibility constraint $\abs{\X_s}^2=1$ for all time, which may be rewritten as \eqref{eq:SB_FBP4}. This function, corresponding to the filament tension, is an unknown that must be determined at each time by the instantaneous centerline shape.

\begin{figure}[!ht]
\centering
\includegraphics[scale=0.35]{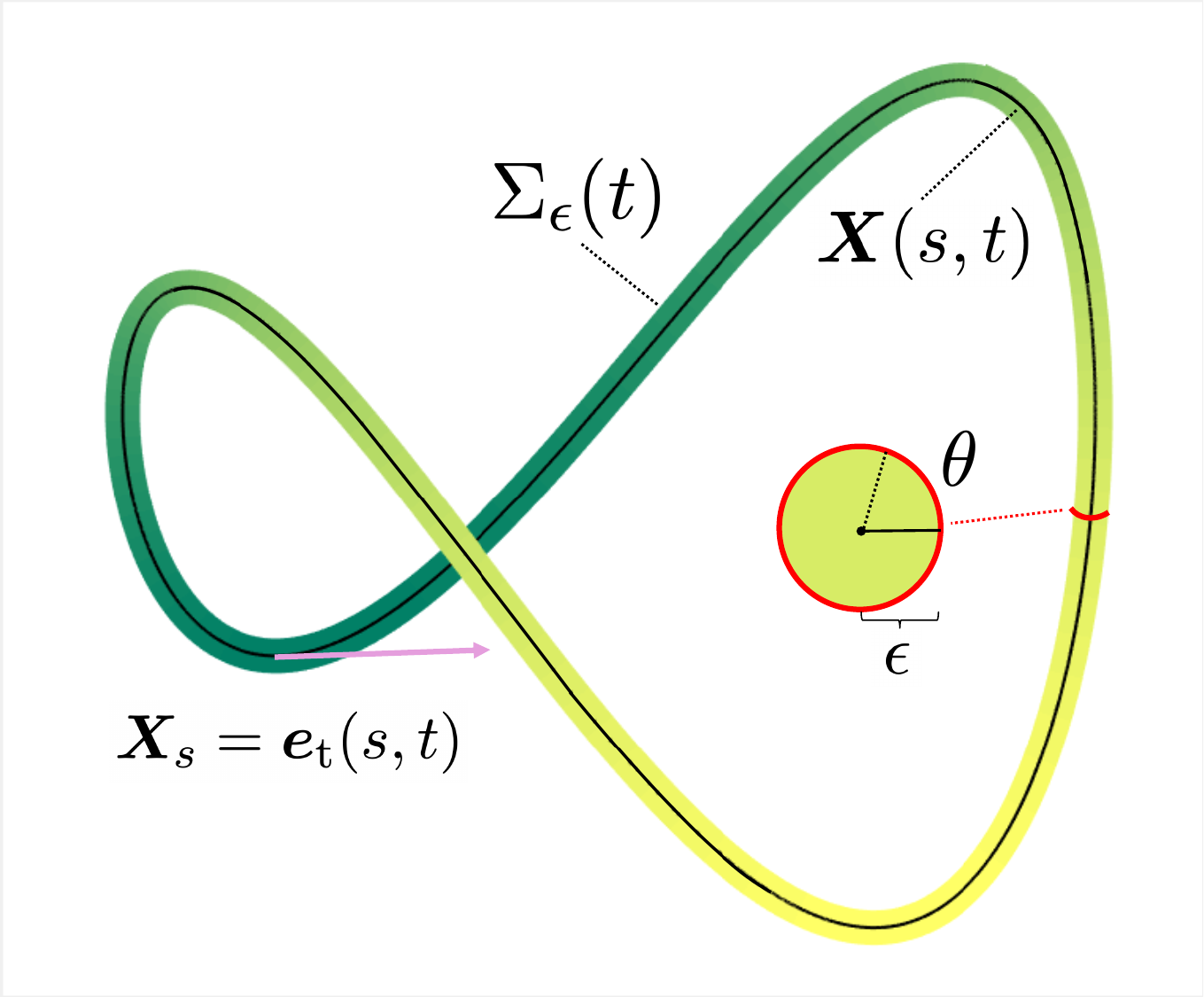}
\caption{Snapshot of a filament $\Sigma_\epsilon(t)$ with centerline $\X(s,t)$. }
\label{fig:filament}
\end{figure}

To analyze \eqref{eq:SB_FBP1}-\eqref{eq:SB_FBP4}, it will be useful for us to reformulate the slender body free boundary problem as the following curve evolution:
\begin{equation}\label{eq:evolution}
\frac{\p\X}{\p t} = -\mc{L}_\epsilon(\X)\big[(\X_{sss}-(\tau\X_s))_s\big] \,, \quad \abs{\X_s}^2=1\,.
\end{equation}
Here the map $\mc{L}_\epsilon(\X)$ is the slender body Neumann-to-Dirichlet (NtD) map, given at each time by the solution to the boundary value problem \eqref{eq:SB_PDE} about the filament $\Sigma_\epsilon$ with centerline $\X$ (see section \ref{subsec:SB_NTD}). This map was proposed in \cite{closed_loop,free_ends} as mathematical justification for (nonlocal) slender body theories, which seek a reduced-dimensional model of the form \eqref{eq:evolution} for the motion of an immersed elastic filament (see section \ref{subsec:lit}). The mapping properties of $\mc{L}_\epsilon(\X)$ for closed, curved $\X$ satisfying \eqref{eq:star_norm} were studied in detail by the author in \cite{ohm2024free}. As the name suggests, the slender body NtD map has the effect of smoothing one derivative, and, via the decompositions shown in this paper and \cite{ohm2024free}, it may be seen that the equation \eqref{eq:evolution} is third-order parabolic.

Here we establish local well-posedness for the evolution \eqref{eq:evolution}. In the following statement, we will use the notation $h^{k,\alpha}(\T)$ to denote the completion of smooth functions in $C^{k,\alpha}(\T)$.
\begin{theorem}[Well-posedness of the slender body free boundary problem]\label{thm:main}
  Given $M,m>0$, there exists $\epsilon_0>0$ depending on $M,m$ such that for all closed initial curves $\X^{\rm in}(s)\in h^{4,\alpha}(\T)$ satisfying 
  \begin{equation}\label{eq:Mm_bounds}
    \norm{\X^{\rm in}}_{C^{4,\alpha}(\T)}< M \quad \text{and} \quad
    \abs{\X^{\rm in}}_\star>m
  \end{equation}
  and for all $0<\epsilon<\epsilon_0$, there exists $T$ depending on $\X^{\rm in}$ and $\epsilon$ such that the slender body free boundary evolution \eqref{eq:evolution} admits a unique solution in $C([0,T],h^{4,\alpha}(\T))$ satisfying \eqref{eq:Mm_bounds}.
\end{theorem}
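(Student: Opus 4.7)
The plan is to decouple \eqref{eq:evolution} into (i) an instantaneous non-local elliptic equation determining the tension $\tau$ from the centerline $\X$, and (ii) a quasilinear third-order parabolic evolution for $\X$ with $\tau=\tau(\X)$ substituted in. With these two pieces in hand I would run a Banach fixed-point argument in a ball in $C([0,T], h^{4,\alpha}(\T))$ about the constant-in-time initial curve $\X^{\rm in}$, with $T$ chosen small in terms of $M$, $m$, and $\epsilon$.

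\textbf{Step 1: Tension determination.} Differentiating $\abs{\X_s}^2 = 1$ in time and substituting \eqref{eq:evolution} gives, at each $t$, a non-local equation $A(\X)\tau = g(\X)$, where $A(\X)\tau := \X_s\cdot\p_s\mc{L}_\epsilon(\X)\big[(\tau\X_s)_{ss}\big]$ and $g(\X) := \X_s\cdot\p_s\mc{L}_\epsilon(\X)\big[\X_{ssss}\big]$. Since $\mc{L}_\epsilon$ smooths one derivative, $A(\X)$ is a non-local elliptic operator of order $2$ acting on the scalar $\tau$. Using the principal-symbol decomposition of $\mc{L}_\epsilon(\X)$ from the author's earlier work, I would verify that the principal symbol of $A(\X)$ is scalar and strictly positive after contracting the tangential component against $\X_s$, yielding invertibility on a mean-zero subspace of $h^{2,\alpha}(\T)$ via Schauder theory for elliptic pseudodifferential operators on the circle, together with a Lipschitz dependence $\X\mapsto\tau(\X)$ from $h^{4,\alpha}(\T)$ into $h^{2,\alpha}(\T)$.

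\textbf{Step 2: Linearized parabolic problem.} Substituting $\tau=\tau(\X)$ into \eqref{eq:evolution} and freezing coefficients at $\X = \X^{\rm in}$, the leading-order evolution becomes $\p_t Y + \mc{L}_\epsilon(\X^{\rm in})\p_s^4 Y = F$, a non-local third-order parabolic equation. The principal symbol of $\mc{L}_\epsilon$ identified in the author's earlier work implies this operator generates an analytic semigroup on the little H\"older space $h^{4,\alpha}(\T)$. I would then invoke standard maximal-regularity estimates (of Da Prato--Grisvard / Lunardi type) in the little H\"older scale to obtain a bound of the form $\norm{Y}_{C([0,T],h^{4,\alpha})} + \norm{\p_t Y}_{C([0,T],h^{1,\alpha})} \le C\big(\norm{\X^{\rm in}}_{h^{4,\alpha}} + \norm{F}_{C([0,T],h^{1,\alpha})}\big)$, with $C$ stable under small perturbations of the coefficient $\X^{\rm in}$.

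\textbf{Step 3: Fixed point and main obstacle.} Define $\Phi:\X\mapsto\wt{\X}$ by (i) computing $\tau(\X)$ via Step 1, (ii) forming the full right-hand side of \eqref{eq:evolution} with $\mc{L}_\epsilon(\X)$, and (iii) solving the resulting parabolic problem by Step 2. On a closed ball in $C([0,T], h^{4,\alpha}(\T))$ about the constant extension of $\X^{\rm in}$, the bounds \eqref{eq:Mm_bounds} are preserved for $T$ sufficiently small, and the Lipschitz properties of $\tau(\X)$ and of $\mc{L}_\epsilon(\X)$ in $\X$ combine with the maximal-regularity estimate of Step 2 to give a strict contraction, hence a unique fixed point. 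Preservation of the constraint $\abs{\X_s}^2 = 1$ along the solution is automatic because $\frac{d}{dt}\abs{\X_s}^2$ satisfies a trivial equation by construction of $\tau$. The main obstacle is Step 1: controlling the non-local operator $A(\X)$ uniformly as $\X$ varies in the ball --- in particular, showing that the coercivity of its principal symbol survives the Lipschitz perturbation in $\X$, and that the lower-order, $\epsilon$-dependent remainder terms from the decomposition of $\mc{L}_\epsilon(\X)$ do not destroy invertibility in the admissible range of $\epsilon$. Once the tension solvability is pinned down, the parabolic step and contraction are comparatively routine.
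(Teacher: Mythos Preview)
Your outline has the right shape but contains a genuine gap in Step~1 that would prevent the contraction in Step~3 from closing.

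First, the force in \eqref{eq:evolution} is $(\tau\X_s)_s$, not $(\tau\X_s)_{ss}$; consequently the tension operator $\mc{Q}_\epsilon[\tau]=\X_s\cdot\p_s\mc{L}_\epsilon\big[(\tau\X_s)_s\big]$ is \emph{first}-order in $\tau$ (its principal part is $\overline{\mc{L}}_\epsilon^{\rm tang}\p_{ss}$, with $\overline{\mc{L}}_\epsilon^{\rm tang}$ of order $-1$), and the correct solvability statement yields only $\tau\in C^{1,\alpha}$ from $\bm{g}=\X_{ssss}\in C^{0,\alpha}$, not $h^{2,\alpha}$. With this regularity, $(\tau\X_s)_s\in C^{0,\alpha}$ and $\mc{L}_\epsilon[(\tau\X_s)_s]\in C^{1,\alpha}$, which is \emph{exactly} the same space as the main forcing $\mc{L}_\epsilon[\X_{ssss}]$. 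A coarse Lipschitz map $\X\mapsto\tau(\X)$ therefore gives the tension term no smallness in the Duhamel integral, and the contraction cannot close via your Step~2 semigroup estimate alone.

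The paper resolves this by a structural decomposition of $\tau$ rather than a single mapping bound. The main part of the tension is shown to depend only on the tangential component $\be_{\rm t}\cdot\X_{ssss}$, and the inextensibility identity $\X_s\cdot\X_{ssss}=-3\X_{ss}\cdot\X_{sss}$ recovers a full derivative there, pushing that piece into $C^{1,\gamma}$ for $\gamma>\alpha$. What remains is split into a \emph{smoother} piece $\tau_+\in C^{1,\gamma}$ (with $\epsilon$-dependent constants) and a piece $\tau_\epsilon\in C^{1,\alpha}$ that is \emph{not} smoother but carries an explicit $\epsilon^{1-\alpha^+}$ factor. The same dichotomy appears in the remainders of $\mc{L}_\epsilon$ itself. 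The contraction is then closed using two distinct estimates: the standard smoothing estimate of the semigroup $e^{-t\overline{\mc{L}}_\epsilon\p_s^4}$ (about the \emph{straight} cylinder, whose symbol is explicit---not $\mc{L}_\epsilon(\X^{\rm in})$ as you propose) handles the smoother pieces via a $T^{(\gamma-\alpha)/3}$ gain, while an $\epsilon$-quantitative maximal-regularity bound $\|\int_0^t e^{-(t-t')\overline{\mc{L}}_\epsilon\p_s^4}\bm{g}\,dt'\|_{C^{4,\alpha}}\lesssim(\epsilon+T^{1/4}+T)\|\bm{g}\|_{C^{1,\alpha}}$ handles the non-smoother pieces, the leading $\epsilon$ combining with $\epsilon^{-\alpha^+}$ to give smallness for $\epsilon<\epsilon_0$. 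Your claim that ``the parabolic step and contraction are comparatively routine'' underestimates precisely this balancing act, which is where the smallness of $\epsilon_0$ enters.
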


The proof, which appears in section \ref{sec:evolution}, relies on a detailed decomposition of the slender body NtD operator $\mc{L}_\epsilon(\X)$ established in the author's previous work \cite{ohm2024free}. In particular, at each fixed time (which we will omit from our notation), given a curved, closed filament $\Sigma_\epsilon$ with centerline $\X(s)$, the map $\mc{L}_\epsilon(\X)$ may be decomposed into the slender body NtD map about a straight cylinder (for which we have an explicit symbol) plus remainders which are lower order with respect to regularity or size in $\epsilon$. In \cite{ohm2024free}, this decomposition was then applied to study a toy evolution problem for $\X$ involving the same main part $\mc{L}_\epsilon\p_s^4$. 

The key difficulty addressed in this paper is the incorporation of the local inextensibility constraint $\abs{\X_s}^2=1$. To enforce this constraint, at each time, we will need to solve an auxiliary problem to determine the tension $\tau(s,t)$ based on the instantaneous centerline position $\X(s,t)$. This problem (see equation \eqref{eq:SB_PDE_inex} and its reformulation \eqref{eq:TDP}), is known as the \emph{tension determination problem} and involves a complicated dependence on the curve which must be unpacked.  
Analogous tension determination problems for simpler force-to-velocity maps have been studied previously in the context of the inextensible version of the 2D Peskin problem \cite{kuo2023tension,garcia2025immersed} and resistive force theory \cite{mori2023well,albritton2025rods}.
Much of the present work is devoted to deriving detailed bounds for the solution to the tension determination problem. This is especially delicate because the well-posedness of \eqref{eq:evolution} will rely on the fact that the main behavior of the tension acts only in the tangential direction along the filament centerline, which is more regular for an inextensible curve. 

The inextensibility constraint makes the evolution \eqref{eq:evolution} more physically relevant (compared to the toy evolution studied in \cite{ohm2024free}). In particular, it is physically reasonable to ask about the long time behavior of the filament. This is complicated by the filament's finite radius; in particular, the value of $\epsilon_0$ is set by the initial curve shape but must remain small enough relative to the fiber curvature throughout the entire evolution. Here we choose to prove a general theorem for filaments not necessarily close to equilibrium; however, using this setup, one can ask about the nonlinear stability of various steady states, including the circle and any admissible 3D Euler elasticae. This will be pursued in future work.

The remainder of the paper is structured as follows. Following a review of related curve and interface evolution models and results, we define the slender body NtD map $\mc{L}_\epsilon(\X)$ in section \ref{subsec:SB_NTD} and introduce the tension determination problem for $\tau(s,t)$ in section \ref{subsec:inext}. This section contains the statement of the second main result of this paper, Theorem \ref{thm:TDP}, which gives a decomposition of $\tau$ in terms of the curve. This decomposition is essential for obtaining the well-posedness result of Theorem \ref{thm:main}, and the proof forms the bulk of the analysis in this paper.
In section \ref{sec:straight}, we recall the crucial decomposition of the slender body NtD map $\mc{L}_\epsilon$ from \cite{ohm2024free}, along with additional old and new results for mapping properties of $\mc{L}_\epsilon$ about a straight cylinder.  
Section \ref{sec:TDP} is then devoted to the proof of Theorem \ref{thm:TDP}, and in section \ref{sec:evolution}, we prove Theorem \ref{thm:main}.

\subsection{Related models and results}\label{subsec:lit}
The formulation of the slender body free boundary problem comes from seeking a mathematically reasonable lower-dimensional representation of the evolution of an immersed filament in 3D, with dynamics driven by 1D beam theory. 
A primary contribution of our program is the novel type of NtD operator $\mc{L}_\epsilon(\X)$ coupling the fluid and the filament.

Computationally, evolutions of the form \eqref{eq:evolution} with simpler choices of fluid--filament coupling have enjoyed great success as models for undulatory swimming at low Reynolds number
\cite{cox1970motion, batchelor1970slender, hines1978bend, wiggins1998flexive, camalet2000generic, camalet1999self, johnson1979flagellar, keller1976swimming, pironneau1974optimal, lauga2009hydrodynamics, mori2023well}. A classical choice of coupling, dating back to the 1950s \cite{gray1955propulsion}, is \emph{resistive force theory} (or local slender body theory), given by 
\begin{equation}\label{eq:RFT}
\mc{L}_{\epsilon,\rm loc}(\X)= c\abs{\log\epsilon}({\bf I}+\X_s\otimes\X_s)\,.
\end{equation}
The analogue of \eqref{eq:evolution} using $\mc{L}_{\epsilon,\rm loc}(\X)$ then becomes a fourth-order semilinear parabolic equation for $\X(s,t)$, coupled with an explicit elliptic equation for $\tau(s,t)$. 
This evolution, with its greatly simplified analogue of the tension determination problem, has been studied from a PDE perspective by the author and others in \cite{albritton2025rods,mori2023well,moreau2025n}. In particular, in \cite{albritton2025rods}, we consider the resistive force theory analogue of \eqref{eq:evolution} as a gradient flow of the filament bending energy $\frac{1}{2}\int_\T\abs{\X_{ss}}^2\,ds$ and show global well-posedness as well as long-time convergence to the 3D critical points of the bending energy, known as Euler elasticae \cite{EulerOriginal, levien2008elastica, matsutani2010euler}. The resistive force theory evolution is closely related to ``curve-straightening flows'' in geometric analysis, where useful methods for dealing with the local inextensibility constraint $\abs{\X_s}^2=1$ were developed in \cite{koiso1996motion,oelz2011curve}.

Mathematically, $\mc{L}_{\epsilon,\rm loc}(\X)$ is a convenient choice of coupling operator in \eqref{eq:evolution} because it is simple. Physically, however, this coupling is a perhaps overly simplistic description of the viscous fluid's effects on the filament, capturing only the leading $O(\abs{\log\epsilon})$ effects as $\epsilon\to 0$. A natural candidate for incorporating more fluid effects into the coupling is nonlocal slender body theory, $\mc{L}_{\epsilon,\rm nloc}(\X)$ \cite{keller1976slender,johnson1980improved, tornberg2004simulating}. The operator $\mc{L}_{\epsilon,\rm nloc}(\X)$, which we will not write in full here, involves local slender body theory and a 1D singular integral operator describing nonlocal effects of the filament on itself due to the surrounding fluid. Nonlocal slender body theory is a popular choice of coupling in computational models of filament motion \cite{li2013sedimentation,spagnolie2011comparative,tornberg2006numerical,lauga2009hydrodynamics,cortez2012slender,NorwayPoF,maxian2021integral,maxian2022hydrodynamics}. However, due to well-known issues at high wavenumbers $\sim1/\epsilon$ \cite{gotz2000interactions, inverse, tornberg2004simulating, shelley2000stokesian}, the classical $\mc{L}_{\epsilon,\rm nloc}(\X)$ operator is not viable for making sense of \eqref{eq:evolution} from a PDE perspective. These high frequencies are typically truncated by numerical discretization, which skirts this issue, but the question remains: \emph{Is there a choice of more physically realistic coupling that yields a well-posed PDE of the form \eqref{eq:evolution}?}

To address this, in \cite{closed_loop,free_ends}, the author along with Mori and Spirn proposed the slender body NtD map $\mc{L}_\epsilon(\X)$ as a candidate for coupling the 1D filament dynamics to the 3D fluid. The operator is defined via the solution to a natural type of boundary value problem for the Stokes equations exterior to $\Sigma_\epsilon$ (see equation \eqref{eq:SB_PDE}) and thus incorporates more fluid physics while retaining ``nice'' mapping properties. An in-depth study of this operator's mapping properties was initiated by the author in the simpler Laplace setting \cite{ohm2025angle}, where it was shown that the main behavior about a curved filament is given by the slender body NtD map about a straight cylinder, with remainders that are smoother or small in $\epsilon$. In \cite{ohm2024free}, an analogous decomposition was shown to hold in the Stokes setting and applied to a toy extensible (but not physically realistic) filament evolution equation.

The slender body free boundary problem \eqref{eq:SB_FBP1}-\eqref{eq:SB_FBP4} is closely related to the Peskin problem for the evolution of a 1D closed filament in a 2D Stokes fluid 
\cite{cameron2024critical, chen2023peskin, gancedo2020global, garcia2023critical, garcia2023peskin, lin2019solvability, mori2019well, tong2021regularized, tong2023geometric}. The classical formulation involves an extensible filament enclosing a fluid region whose area must be conserved due to incompressibility. The inextensible version of the Peskin problem has been analyzed in \cite{kuo2023tension,garcia2025immersed} and shares many direct analogues with the slender body problem, including the basic formulation of the tension determination problem and the ultimately third-order parabolic nature of the evolution. The treatment of the tension determination problem in these works serves as the inspiration for the strategy here. 
However, new difficulties arise for the slender body free boundary problem, mostly due to the fact that the slender body NtD map $\mc{L}_\epsilon(\X)$ is much more complex than the analogous coupling operator for the 2D Peskin problem (see Remark \ref{rem:Peskin_TDP}). The main behavior of $\mc{L}_\epsilon(\X)$ is given by an explicit but complicated symbol (see Proposition \ref{prop:Leps_spectrum}), and, in addition to smoother remainder terms, the extraction of this symbol as the main behavior yields remainder terms which are small in $\epsilon$ but not necessarily smoother. This makes the slender body free boundary problem quasilinear.

Finally, we also highlight inspiration from the study of more classical interfacial fluids problems, including the Muskat problem \cite{alazard2020paralinearization, flynn2021vanishing, nguyen2020paradifferential}, Hele-Shaw and water waves problems \cite{alazard2014cauchy, alazard2009paralinearization, beale1993growth,hou1994removing,lannes2005well}. It is possible that the techniques developed to analyze these problems could be used to improve our results.

\subsection{The slender body NtD map}\label{subsec:SB_NTD}
We understand the slender body free boundary problem \eqref{eq:SB_FBP1}-\eqref{eq:SB_FBP4} as an evolution equation \eqref{eq:evolution} for the filament centerline $\X(s,t)$ where, at each time, the slender body NtD map $\mc{L}_\epsilon(\X)[\cdot]$ is defined via the solution to the following boundary value problem, introduced in \cite{closed_loop,free_ends}. 

For any fixed time, we let $\Sigma_\epsilon$ be as in \eqref{eq:SigmaEps}-\eqref{eq:rstar} and omit the $t$-dependence in our notation. We may think of the surface $\p\Sigma_\epsilon$ as a function of the arclength parameter $s$ and the cross sectional angle $\theta$ (see figure \ref{fig:filament}). We may likewise think of the surface Jacobian $\mc{J}_\epsilon=\mc{J}_\epsilon(s,\theta)$ as a function of the cross section $s$ and angle $\theta$.
Given a line force density $\bm{f}(s):\T\to\R^3$, we define the \emph{slender body boundary value problem} as $(\bu,p): \R^3\backslash \overline{\Sigma_\epsilon} \to \R^3\times\R$ satisfying 
\begin{equation}\label{eq:SB_PDE}
\begin{aligned}
-\Delta \bu +\nabla p &= 0\,, \quad \div\bu=0 \qquad \text{in }\R^3\backslash \overline{\Sigma_\epsilon} \\
\int_0^{2\pi}(\bm{\sigma}[\bu]\bm{n}) \, \mc{J}_\epsilon(s,\theta)\,d\theta &= \bm{f}(s) \qquad\qquad\qquad\; \text{on }\p\Sigma_\epsilon \\
\bu\big|_{\p\Sigma_\epsilon} &= \bv(s)\,, \qquad\qquad\quad\;\;\, \text{unknown but independent of }\theta\,,   
\end{aligned}
\end{equation}
along with $\abs{\bu}\to 0$ as $\abs{\bx}\to\infty$. Here the force data $\bm{f}(s)$ is understood as the total surface stress $\bm{\sigma}[\bu]\bm{n}$ per filament cross section, weighted by the fiber surface area through $\mc{J}_\epsilon$ -- an angle-averaged Neumann boundary condition. To uniquely determine a solution, this averaged Neumann condition is supplemented with a geometric constraint on the Dirichlet boundary value of $\bu$. In particular, the value of $\bu$ on the filament surface $\p\Sigma_\epsilon$ is unknown but constrained to be a function of arclength $s$ only.

The \emph{slender body Neumann-to-Dirichlet (NtD) map} $\mc{L}_\epsilon$ is then defined as the operator
\begin{equation}\label{eq:SB_NtD}
\mc{L}_\epsilon(\X) : \bm{f}(s) \mapsto \bm{v}(s)\,,
\end{equation}
i.e., solve the boundary value problem \eqref{eq:SB_PDE} for $\bu$ and evaluate on $\p\Sigma_\epsilon$. The map depends on both the filament radius $\epsilon$ and the centerline geometry $\X(s)$, although to avoid clutter we will typically drop the $\X$ dependence from our notation. Note that $\mc{L}_\epsilon$ is self-adjoint: letting $\bu$, $\bu_h$ be solutions to \eqref{eq:SB_PDE} with force data $\bm{f}(s)$, $\bm{h}(s)$ respectively, then 
\begin{equation}\label{eq:selfadjoint_Leps}
\begin{aligned}
  \int_\T \mc{L}_\epsilon[\bm{f}](s)\cdot\bm{h}(s)\,ds 
  = \int_{\Omega_\epsilon}2\E(\bu):\E(\bu_h)\,d\bx
  = \int_\T \bm{f}(s)\cdot\mc{L}_\epsilon[\bm{h}](s)\,ds\,,
\end{aligned}
\end{equation}
where $\E(\bu)=\frac{1}{2}(\nabla\bu+\nabla\bu^{\rm T})$ is the symmetric gradient.
We additionally recall some important results on the well-posedness of the slender body BVP and the basic mapping properties of $\mc{L}_\epsilon$. By \cite[Theorem 1.2]{closed_loop} and \cite[Lemma 1.14]{ohm2024free}, we have the following. 
\begin{proposition}[Well-posedness of SB BVP \cite{closed_loop,ohm2024free}]\label{prop:SB_BVP}
  Let $\Sigma_\epsilon$ be a filament as in \eqref{eq:SigmaEps}-\eqref{eq:rstar} with centerline $\X(s)\in C^2(\T)$. Given data $\bm{f}(s)\in L^2(\T)$, there exists a unique solution $(\bu,p)\in (\dot H^1\cap L^6)\times L^2(\R^3\backslash \overline{\Sigma_\epsilon})$ to the slender body boundary value problem \eqref{eq:SB_PDE} which satisfies 
  \begin{equation}\label{eq:up_ests}
    \norm{\nabla\bu}_{L^2(\R^3\backslash \overline{\Sigma_\epsilon})} + \norm{p}_{L^2(\R^3\backslash \overline{\Sigma_\epsilon})}\le c(\norm{\X}_{C^2},c_\Gamma)\abs{\log\epsilon}^{1/2}\norm{\bm{f}}_{L^2(\T)}\,.
  \end{equation}
  The slender body Neumann-to-Dirichlet map $\mc{L}_\epsilon$ satisfies the $L^2(\T)\to L^2(\T)$ bound
\begin{equation}
  \norm{\mc{L}_\epsilon[\bm{f}]}_{L^2(\T)} \le c(\norm{\X}_{C^2},c_\Gamma)\abs{\log\epsilon}\norm{\bm{f}}_{L^2(\T)}\,.
\end{equation} 
If $\X(s)\in C^{3,\alpha}(\T)$ and $\bm{f}(s)\in C^{0,\alpha}(\T)$, $0<\alpha<1$, then the slender body NtD map satisfies 
\begin{equation}
  \norm{\mc{L}_\epsilon[\bm{f}]}_{C^{1,\alpha}(\T)} \le c(\epsilon,\norm{\X}_{C^{3,\alpha}},c_\Gamma)\norm{\bm{f}}_{C^{0,\alpha}(\T)}\,.
\end{equation} 
\end{proposition}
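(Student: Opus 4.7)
The plan is to recover the result by combining a variational solution theory for \eqref{eq:SB_PDE} with a logarithmic trace inequality adapted to the slender geometry, and then to upgrade via self-adjointness and Schauder theory for the two estimates on $\mc{L}_\epsilon$. For the existence part, I would set up a weak formulation on the space
\[
V = \big\{\bv\in \dot H^1(\R^3\setminus\overline{\Sigma_\epsilon})\,:\, \div\bv=0,\ \bv|_{\p\Sigma_\epsilon}\text{ is a function of }s\text{ only}\big\},
\]
and look for $\bu\in V$ satisfying
\[
  \int_{\R^3\setminus\overline{\Sigma_\epsilon}} 2\E(\bu)\!:\!\E(\bv)\,d\bx = \int_\T \bm{f}(s)\cdot \bv(s)\,ds \quad\text{for all }\bv\in V.
\]
The pressure $p$ is recovered by the standard de Rham argument. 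The RHS is a well-defined linear functional on $V$ because the trace of any $\bv\in V$ on $\p\Sigma_\epsilon$ is $\theta$-independent and can be identified with a function on $\T$. Coercivity on $V$ follows from Korn's inequality in the exterior domain together with the decay $\bu\to 0$ at infinity, which makes $\|\E(\bu)\|_{L^2}$ equivalent to $\|\nabla\bu\|_{L^2}$.

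The main technical step is the logarithmic trace estimate
\[
  \|\bv(s)\|_{L^2(\T)} \le c(\|\X\|_{C^2},c_\Gamma)\,|\log\epsilon|^{1/2}\,\|\nabla\bv\|_{L^2(\R^3\setminus\overline{\Sigma_\epsilon})}\qquad \bv\in V.
\]
I would prove this by working in a tubular neighborhood of $\X$ in Fermi coordinates $(s,r,\theta)$, where $r$ is distance to the centerline. Because $\bv|_{r=\epsilon}$ is $\theta$-independent, averaging in $\theta$ and integrating $\int_\epsilon^{r_\star} \p_r \bv\, dr$ along radial lines gives the pointwise bound
\[
  |\bv(s)|^2 \lesssim |\widetilde\bv(s,r_\star)|^2 + |\log\epsilon|\int_\epsilon^{r_\star}\!\!\int_0^{2\pi} |\p_r\bv|^2\, r\,d\theta\,dr
\]
after a Cauchy--Schwarz with weight $r^{-1}$; the first term is controlled by a standard trace on the outer shell $\{r=r_\star\}$. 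Integrating in $s$ gives the claimed inequality, with the constant absorbing the Jacobian of the Fermi parametrization, which is controlled by $\|\X\|_{C^2}$ and $c_\Gamma$. Combining the trace estimate with the weak formulation and coercivity yields $\|\nabla\bu\|_{L^2}\lesssim |\log\epsilon|^{1/2}\|\bm f\|_{L^2}$, and $\|p\|_{L^2}$ follows from the standard Stokes pressure estimate in exterior domains.

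For the $L^2(\T)\to L^2(\T)$ bound on $\mc{L}_\epsilon$ I would exploit the self-adjointness \eqref{eq:selfadjoint_Leps}: if $\bu_h$ is the solution to \eqref{eq:SB_PDE} with data $\bm h$, then
\[
  \Big|\int_\T \mc{L}_\epsilon[\bm f]\cdot \bm h\,ds\Big| = \Big|\int 2\E(\bu)\!:\!\E(\bu_h)\,d\bx\Big| \le 2\|\nabla\bu\|_{L^2}\|\nabla\bu_h\|_{L^2} \lesssim |\log\epsilon|\,\|\bm f\|_{L^2}\|\bm h\|_{L^2},
\]
and taking the supremum over $\|\bm h\|_{L^2}\le 1$ gives the stated bound. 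Finally, for the $C^{0,\alpha}\to C^{1,\alpha}$ bound, I would bootstrap from the $H^1$ solution. Interior Stokes regularity gives smoothness away from $\p\Sigma_\epsilon$, so the issue is localized to a neighborhood of the surface. I would flatten $\p\Sigma_\epsilon$ using the Fermi chart — which is a $C^{2,\alpha}$ map once $\X\in C^{3,\alpha}$ — and view the mixed boundary condition as a standard Neumann-type condition for the unknown pair $(\bu,p)$ paired with the scalar unknown $\bv(s)$. Since the angle-averaged Neumann data $\bm f\in C^{0,\alpha}$ corresponds to a full $C^{0,\alpha}$ traction datum modulo a $\theta$-independent shift that is determined self-consistently by the Dirichlet constraint, Agmon--Douglis--Nirenberg Schauder estimates for the Stokes system apply locally and yield $\bu\in C^{1,\alpha}$ up to the boundary with the claimed estimate.

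The main obstacle is the logarithmic trace inequality, both proving it and tracking how its constant depends on the curve geometry through the Fermi Jacobian. Everything downstream — the energy bound, the $L^2$ operator norm, and the setup for Schauder estimates — is either routine variational theory or a careful but standard localization argument, provided this sharp $|\log\epsilon|^{1/2}$ trace bound is in hand.
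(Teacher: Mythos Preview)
The paper does not actually prove this proposition; it is quoted verbatim from prior work (\cite[Theorem~1.2]{closed_loop} for the $L^2$ theory and \cite[Lemma~1.14]{ohm2024free} for the H\"older bound), so there is no in-paper proof to compare against. Your outline for the $L^2$ portion---Lax--Milgram on the constrained divergence-free space, Korn in the exterior domain, and the logarithmic trace inequality via radial integration with the $r^{-1}$ weight---is precisely the strategy carried out in \cite{closed_loop}, and your duality argument for the $L^2\to L^2$ bound on $\mc{L}_\epsilon$ is the natural one.

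The Schauder step is where your sketch is thinnest. The boundary condition in \eqref{eq:SB_PDE} is genuinely nonstandard: it is not a Neumann condition on $\p\Sigma_\epsilon$ but an \emph{angle-averaged} Neumann condition coupled to a Dirichlet-type constraint (the trace must be $\theta$-independent), so the pair does not slot directly into the Agmon--Douglis--Nirenberg framework after a single boundary flattening. Your claim that the averaged datum ``corresponds to a full $C^{0,\alpha}$ traction datum modulo a $\theta$-independent shift'' hides the real difficulty: that shift is itself an unknown function of $(s,\theta)$ whose regularity must be established, not assumed. In \cite{ohm2024free} the $C^{1,\alpha}$ bound is obtained through a layer-potential representation and a detailed decomposition of the operator rather than a black-box ADN estimate. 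Your approach could likely be made to work, but it would require verifying a complementing condition for this overdetermined system or otherwise decoupling the constraint, which is more than a routine localization.
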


The solution $\bu$ to \eqref{eq:SB_PDE} satisfies the energy identity
\begin{equation}\label{eq:energyID}
\int_{\R^3\backslash \overline{\Sigma_\epsilon}} 2\abs{\E(\bu)}^2\,d\bx = \int_\T \bv(s)\cdot\bm{f}(s)\,ds = \int_\T \mc{L}_\epsilon[\bm{f}](s)\cdot\bm{f}(s)\,ds\,,
\end{equation}
from which we may obtain an energy identity for the free boundary problem \eqref{eq:SB_FBP1}-\eqref{eq:SB_FBP4}: 
\begin{equation}\label{eq:dynamic_energy}
\int_{\R^3\backslash \overline{\Sigma_\epsilon}} 2\abs{\E(\bu)}^2\,d\bx = -\int_\T \frac{\p\X}{\p t}\cdot\X_{ssss}\,ds = -\frac{1}{2}\p_t\int_\T \abs{\X_{ss}}^2\,ds\,.
\end{equation}
In particular, we see that the $L^2$ norm of the filament centerline curvature $\abs{\X_{ss}}$ is non-increasing in time, giving \eqref{eq:SB_FBP1}-\eqref{eq:SB_FBP4} a dissipative structure that may be possible to leverage for long-time behavior, although we do not pursue this here.

\subsection{Inextensibility and the tension determination problem}\label{subsec:inext}
The formulation \eqref{eq:SB_FBP1}-\eqref{eq:SB_FBP4} of the slender body free boundary problem involves an additional physical constraint: the filament centerline is locally inextensible and cannot grow or shrink in length over time. In particular, instead of simply solving \eqref{eq:SB_PDE} at each time, we must consider a modified boundary value problem.
More generally, given $\bm{g}(s):\T\to \R^3$, we seek to solve
\begin{equation}\label{eq:SB_PDE_inex}
\begin{aligned}
-\Delta \bu + \nabla p &=0\,, \quad \div\bu=0\qquad \text{in }\R^3\backslash\overline{\Sigma_\epsilon} \\
\int_0^{2\pi}(\bm{\sigma}{\bm n})\,\mc{J}_\epsilon(s,\theta)\,d\theta &= \bm{g}(s)+(\tau\X_s)_s  \qquad \text{on }\p\Sigma_\epsilon \\
\bu\big|_{\p\Sigma_\epsilon} &= \bv(s)\,, \hspace{2.2cm} \text{unknown but independent of }\theta\,, \\
\p_s\bv\cdot\X_s &=0 \hspace{3cm} \text{on }\p\Sigma_\epsilon\,,
\end{aligned}
\end{equation}
along with $\abs{\bu}\to 0$ as $\abs{\bx}\to \infty$. Here the filament tension $\tau(s):\T\to\R$ is a Lagrange multiplier enforcing the local inextensibility constraint $\abs{\X_s}^2=1$.
Note that for $\bu$ in the set 
\begin{equation}
\begin{aligned}
  \bigg\{\bu\in \dot H^1\cap L^6(\R^3\backslash\overline{\Sigma_\epsilon})\,:\, \div\bu=0\,,\; \bu\big|_{\p\Sigma_\epsilon}=\bv(s)\; \text{ independent of }\theta\,, \\
  \int_\T \bv\cdot(\phi\X_s)_s\,ds = 0\;\; \forall\,\phi(s)\in C^\infty(\T)\bigg\}\,,
\end{aligned}
\end{equation}
a unique velocity field satisfying \eqref{eq:SB_PDE_inex} in a weak sense follows by the Lax-Milgram lemma. Existence of a unique pressure field $p\in L^2(\R^3\backslash\overline{\Sigma_\epsilon})$ follows by nearly identical arguments to \cite{closed_loop}, and the same $L^2$-based estimates \eqref{eq:up_ests} hold. However, in order to study the evolution \eqref{eq:evolution} and show Theorem \ref{thm:main}, at each fixed time we will require refined bounds for the tension $\tau$ in terms of the instantaneous curve shape $\X$ and data $\bm{g}$. 

We may obtain an equation for $\tau$ by combining the expression $\bv(s)=\mc{L}_\epsilon[\bm{g}+(\tau\X_s)_s]$ with the constraint $\p_s\bv\cdot\X_s=0$. In particular, the tension must satisfy
\begin{equation}\label{eq:tension_rel}
\p_s\mc{L}_\epsilon[(\tau\X_s)_s]\cdot\X_s = -\p_s\mc{L}_\epsilon[\bm{g}]\cdot\X_s \,.
\end{equation}
Defining the operator $\mc{Q}_\epsilon[\tau]$ as
\begin{equation}\label{eq:Qeps}
\mc{Q}_\epsilon[\tau] :=\big(\mc{L}_\epsilon[(\tau\X_s)_s]\big)_s\cdot\X_s\,,
\end{equation}
we may formulate the \emph{tension determination problem} as follows. Given $\X(s)$ and $\bm{g}(s)$, we seek $\tau$ satisfying 
\begin{equation}\label{eq:TDP}
\mc{Q}_\epsilon[\tau] = -\big(\mc{L}_\epsilon[\bm{g}]\big)_s\cdot\X_s\,.
\end{equation}
Note that the operator $\mc{Q}_\epsilon$ is also self-adjoint. In particular, for any $\phi(s)\in C^{1,\alpha}(\T)$, we have
\begin{equation}
\begin{aligned}
  \int_{\T} \mc{Q}_\epsilon[\tau]\,\phi(s)\,ds &= \int_{\T} \big(\mc{L}_\epsilon[(\tau\X_s)_s]\big)_s\cdot\X_s\,\phi(s)\,ds 
  = -\int_{\T} \mc{L}_\epsilon[(\tau\X_s)_s]\cdot(\phi\X_s)_s\,ds \\
  &= -\int_{\T} (\tau\X_s)_s\cdot\mc{L}_\epsilon[(\phi\X_s)_s]\,ds
  = \int_{\T} \tau\,\X_s\cdot\big(\mc{L}_\epsilon[(\phi\X_s)_s]\big)_s\,ds\\
  &= \int_\T \tau(s)\,\mc{Q}_\epsilon[\phi]\,ds\,,
\end{aligned}
\end{equation}
where we have used that $\mc{L}_\epsilon$ is self-adjoint by \eqref{eq:selfadjoint_Leps}.

Here, using detailed mapping properties of the slender body NtD operator $\mc{L}_\epsilon$ (see section \ref{sec:straight}), we show that \eqref{eq:TDP} admits a unique solution $\tau$ which may be decomposed as follows. 
\begin{theorem}[Tension determination problem]\label{thm:TDP}
Given $\bm{g}(s)\in C^{0,\alpha}(\T)$, $0<\alpha<1$, and a filament $\Sigma_\epsilon$ as in \eqref{eq:SigmaEps}-\eqref{eq:rstar} with centerline $\X(s)\in C^{3,\alpha}(\T)$, there exists a unique solution $\tau\in C^{1,\alpha}(\T)$ to the tension determination problem \eqref{eq:tension_rel}. There exists $0<\epsilon_{\rm t}=\epsilon_{\rm t}(\norm{\X}_{C^{2,\alpha^+}},c_\Gamma)$ such that for any $0<\epsilon<\epsilon_{\rm t}$, this solution satisfies 
\begin{equation}\label{eq:tau_bound}
\begin{aligned}
\tau &= (\mc{G}_0+\mc{G}_\epsilon+\mc{G}_+)[\bm{g}] \,, \\
\mc{G}_0[\bm{g}] &= (I- \overline{\mc{L}}_\epsilon^{\rm tang}\p_{ss})^{-1}\p_s\overline{\mc{L}}_\epsilon^{\rm tang}\bigg[\be_{\rm t}\cdot\bm{g}-\int_{\T}\be_{\rm t}\cdot\bm{g}\,ds \bigg]\\  
\norm{\mc{G}_\epsilon[\bm{g}]}_{C^{1,\alpha}} &\le c(\norm{\X}_{C^{2,\alpha^+}},c_\Gamma)\,\epsilon^{1-\alpha^+}\abs{\log\epsilon}^3\norm{\bm{g}}_{C^{0,\alpha}}\\
\norm{\mc{G}_+[\bm{g}]}_{C^{1,\gamma}} &\le c(\epsilon,\norm{\X}_{C^{3,\alpha}},c_\Gamma)\norm{\bm{g}}_{C^{0,\alpha}}
\end{aligned}
\end{equation}
for any $\alpha^+>\alpha$ and any $\alpha<\gamma<1$. Here $\be_{\rm t}(s)=\X_s(s)$ is the unit tangent vector to the filament centerline and $\overline{\mc{L}}_\epsilon^{\rm tang}$ is given by \eqref{eq:tang_and_nor}.

Moreover, given two nearby\footnote{Here and throughout, we say that two filaments are ``nearby" if $\|\X^{(a)}-\X^{(b)}\|_{C^{2,\alpha}(\T)}=\delta$ for some $\delta$ sufficiently small.} filaments as in \eqref{eq:SigmaEps}-\eqref{eq:rstar} with centerlines $\X^{(a)}$, $\X^{(b)}$ in $C^{3,\alpha}(\T)$, the differences between the remainder terms $\mc{G}_\epsilon^{(a)}-\mc{G}_\epsilon^{(b)}$ and $\mc{G}_+^{(a)}-\mc{G}_+^{(b)}$ in the corresponding tension determination problems satisfy
\begin{equation}\label{eq:tau_bound_lip}
\begin{aligned}
  &\norm{(\mc{G}_\epsilon^{(a)}-\mc{G}_\epsilon^{(b)})[\bm{g}]}_{C^{1,\alpha}}\\
  &\quad\qquad\le c(\|\X^{(a)}\|_{C^{2,\alpha^+}},\|\X^{(b)}\|_{C^{2,\alpha^+}},c_\Gamma)\,\epsilon^{1-\alpha^+}\abs{\log\epsilon}^3\norm{\X^{(a)}-\X^{(b)}}_{C^{2,\alpha^+}}\norm{\bm{g}}_{C^{0,\alpha}}\\
  &\norm{(\mc{G}_+^{(a)}-\mc{G}_+^{(b)})[\bm{g}]}_{C^{1,\gamma}}\le c(\epsilon,\|\X^{(a)}\|_{C^{3,\alpha}},\|\X^{(b)}\|_{C^{3,\alpha}},c_\Gamma)\norm{\X^{(a)}-\X^{(b)}}_{C^{3,\alpha}}\norm{\bm{g}}_{C^{0,\alpha}}\,.
\end{aligned}
\end{equation}

\end{theorem}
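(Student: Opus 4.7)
The plan is to extract the principal operator inside $\mc{Q}_\epsilon$ using the straight-cylinder decomposition of $\mc{L}_\epsilon$ from \cite{ohm2024free} recalled in section \ref{sec:straight}, invert this principal operator through its explicit Fourier symbol, and close the full equation by a contraction/Fredholm argument. Writing $\mc{L}_\epsilon(\X)=\overline{\mc{L}}_\epsilon+\mc{R}_\epsilon+\mc{R}_+$, where $\overline{\mc{L}}_\epsilon$ is the tangential/normal-diagonal straight-cylinder NtD in the local frame along $\X$, $\mc{R}_\epsilon$ is $\epsilon$-small at the same regularity, and $\mc{R}_+$ gains regularity, I substitute into \eqref{eq:Qeps} and use $\abs{\X_s}^2=1$ to split $(\tau\X_s)_s=\tau_s\X_s+\tau\X_{ss}$ into a tangential scalar $\tau_s$ and a purely normal piece $\tau\X_{ss}$. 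Differentiating in $s$ and pairing with $\X_s$, the principal behavior comes from the diagonal tangential block $\overline{\mc{L}}_\epsilon^{\rm tang}$ acting on $\tau_s$, while (i) the off-diagonal tangential/normal couplings of $\overline{\mc{L}}_\epsilon$ (which vanish on a straight cylinder and so carry an extra curvature factor), (ii) the normal block $\overline{\mc{L}}_\epsilon^{\rm nor}[\tau\X_{ss}]$ (which, through the one-derivative smoothing of $\overline{\mc{L}}_\epsilon$, outputs higher H\"older regularity than the principal piece), and (iii) the remainders $\mc{R}_\epsilon,\mc{R}_+$ applied to $(\tau\X_s)_s$ are absorbed into error operators. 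This yields
\[
\mc{Q}_\epsilon[\tau]=\mc{Q}_0[\tau]+\mc{Q}_\epsilon^{\rm rem}[\tau]+\mc{Q}_+^{\rm rem}[\tau],
\]
and an analogous three-piece decomposition of the right-hand side of \eqref{eq:TDP}.

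Next I analyze the principal operator $\mc{Q}_0$, which is a Fourier multiplier on $\T$ built from $\overline{\mc{L}}_\epsilon^{\rm tang}$ and $\p_s$; its symbol, explicit from Proposition \ref{prop:Leps_spectrum}, is of elliptic type (order roughly $\abs{k}^2\log(1/(\abs{k}\epsilon))$) with a one-dimensional kernel of constants, matching the gauge freedom that a constant $\tau$ contributes only a purely normal vector to $(\tau\X_s)_s$. Since the principal part of the right-hand side is automatically mean-zero (being of the form $\p_s(\cdots)$), inverting $\mc{Q}_0$ on the mean-zero subspace and rearranging produces exactly the factored form $(I-\overline{\mc{L}}_\epsilon^{\rm tang}\p_{ss})^{-1}\p_s\overline{\mc{L}}_\epsilon^{\rm tang}$ applied to $\be_{\rm t}\cdot\bm{g}-\int_\T\be_{\rm t}\cdot\bm{g}\,ds$, which I identify with $\mc{G}_0[\bm{g}]$; the H\"older bounds on $\overline{\mc{L}}_\epsilon^{\rm tang}$ from section \ref{sec:straight} then give $\mc{G}_0\colon C^{0,\alpha}\to C^{1,\alpha}$. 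Applying $\mc{Q}_0^{-1}$ to the full TDP recasts it as a fixed point
\[
\tau=\mc{G}_0[\bm{g}]+\mc{Q}_0^{-1}\big(\mc{Q}_\epsilon^{\rm rem}[\tau]+\mc{Q}_+^{\rm rem}[\tau]\big)+\text{(data-driven remainder terms)}
\]
in $C^{1,\alpha}(\T)$. Using the bounds on $\mc{R}_\epsilon$ and $\mc{R}_+$ from \cite{ohm2024free} together with the extra $\p_s$ in $\mc{Q}_\epsilon$ and the one-derivative gain from $\mc{Q}_0^{-1}$, the $\mc{Q}_\epsilon^{\rm rem}$ piece contracts by a factor of $\epsilon^{1-\alpha^+}\abs{\log\epsilon}^3$ when $\epsilon<\epsilon_{\rm t}$, while $\mc{Q}_+^{\rm rem}$ smooths from $C^{1,\alpha}$ into $C^{1,\gamma}$. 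Uniqueness is guaranteed independently by the self-adjoint coercive structure of $\mc{Q}_\epsilon$: pairing a homogeneous solution $\tau$ against itself and using the energy identity \eqref{eq:energyID} with $\bm{f}=(\tau\X_s)_s$ forces $\E(\bu)=0$, hence $\bu=0$ and $(\tau\X_s)_s=0$, which for a (necessarily non-straight) closed curve implies $\tau=0$. A Banach fixed-point argument then delivers the unique $\tau\in C^{1,\alpha}(\T)$ with the decomposition \eqref{eq:tau_bound}, and the Lipschitz bounds \eqref{eq:tau_bound_lip} follow by subtracting the fixed-point equations for $\X^{(a)}$ and $\X^{(b)}$ and invoking the Lipschitz-in-$\X$ dependence of $\overline{\mc{L}}_\epsilon$, $\mc{R}_\epsilon$, and $\mc{R}_+$ from \cite{ohm2024free}.

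The main obstacle is organizing the first step carefully so that the contributions from the normal projection $\tau\X_{ss}$ and from the off-diagonal blocks of $\overline{\mc{L}}_\epsilon$ genuinely fall into $\mc{Q}_\epsilon^{\rm rem}$ (small in $\epsilon$ at the same regularity) or $\mc{Q}_+^{\rm rem}$ (smoother), rather than perturbing the principal symbol $\mc{Q}_0$. Since $\tau\X_{ss}$ a priori has the same H\"older regularity as $\tau$, the needed improvement must be extracted structurally: from the vanishing of the off-diagonal tangential/normal couplings of $\overline{\mc{L}}_\epsilon$ on a straight cylinder (which produces an extra curvature factor), from the extra smoothing of $\overline{\mc{L}}_\epsilon$ applied to the orthogonal direction $\X_{ss}$, and from the precise $\epsilon$-small rate built into $\mc{R}_\epsilon$ in \cite{ohm2024free}. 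Quantifying these gains together with the extra $\p_s$ and the $\X_s$-projection in $\mc{Q}_\epsilon$ is what ultimately produces the $\epsilon^{1-\alpha^+}\abs{\log\epsilon}^3$ rate for $\mc{G}_\epsilon$ and the $C^{1,\gamma}$ bound for $\mc{G}_+$, and it forms the bulk of the analysis in section \ref{sec:TDP}.
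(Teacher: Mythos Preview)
Your high-level strategy matches the paper's: decompose $\mc{Q}_\epsilon$ via the straight-cylinder NtD decomposition, extract $\overline{\mc{L}}_\epsilon^{\rm tang}\p_{ss}$ as principal part, absorb the $\epsilon$-small remainder by Neumann series and the smoother remainder by Fredholm, and supply the trivial-kernel input via the energy identity. Two technical points, however, are genuine gaps.

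First, inverting $\mc{Q}_0=\overline{\mc{L}}_\epsilon^{\rm tang}\p_{ss}$ on mean-zero functions does \emph{not} produce the factored form $(I-\overline{\mc{L}}_\epsilon^{\rm tang}\p_{ss})^{-1}\p_s\overline{\mc{L}}_\epsilon^{\rm tang}$: all operators here are scalar Fourier multipliers, so $\mc{Q}_0^{-1}$ applied to the principal right-hand side $-\p_s\overline{\mc{L}}_\epsilon^{\rm tang}[(\be_{\rm t}\cdot\bm{g})_0]$ collapses to $-\p_s^{-1}[(\be_{\rm t}\cdot\bm{g})_0]$, whose symbol disagrees with that of the stated $\mc{G}_0$ at low frequencies. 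The paper instead rewrites $\mc{Q}_\epsilon = -(I-\overline{\mc{L}}_\epsilon^{\rm tang}\p_{ss}) + (\text{remainders}+I)$ and applies $-(I-\overline{\mc{L}}_\epsilon^{\rm tang}\p_{ss})^{-1}$ to both sides. This operator is invertible on all of $C^{1,\alpha}$ (no mean-zero restriction), its mapping properties split cleanly into an $\epsilon$-small and a smoother piece by Lemma~\ref{lem:TDP_auxlem}, and the stated $\mc{G}_0$ appears directly; the leftover $I$ is absorbed into the compact part $\mc{K}$.

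Second, a pure Banach fixed-point cannot close the argument: the smoother remainder $\mc{Q}_+^{\rm rem}$ (the paper's $\mc{K}_+$) is compact but not small, hence not a contraction on $C^{1,\alpha}$. The paper uses the Neumann series only for the $\epsilon$-small piece $\mc{K}_\epsilon$, reducing to $(I+\mc{K})[\tau]=\text{RHS}$ with $\mc{K}:C^{1,\alpha}\to C^{1,\gamma}$ compact; existence then follows from the Fredholm alternative together with the energy-based trivial-kernel argument you sketched. Finally, you have not addressed why the resulting bound depends only on $\norm{\X}_{C^{3,\alpha}}$ and $c_\Gamma$ rather than on the particular curve $\X$; the paper extracts this uniformity via a separate compactness/contradiction argument (Proposition~\ref{prop:tdp_qual}).
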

The proof of Theorem \ref{thm:TDP} appears in section \ref{sec:TDP}. 
Here $\overline{\mc{L}}_\epsilon^{\rm tang}$ is the tangential part of the slender body NtD operator about a straight filament, which is given by an explicit Fourier multiplier (see Proposition \ref{prop:Leps_spectrum}).
We emphasize that, \emph{a priori}, the term $\mc{G}_\epsilon$ has the same regularity as $\mc{G}_0$, but, crucially, the $\epsilon$-dependence in the bound for $\mc{G}_\epsilon$ is explicit and small. The term $\mc{G}_+$ is \emph{a priori} more regular than $\mc{G}_0$, but the $\epsilon$-dependence in the corresponding bounds is not explicit and may be large.
A difficulty in analyzing the evolution \eqref{eq:evolution} is balancing the effects of both types of remainder terms.

We additionally note that, in contrast to the inextensible version of the 2D Peskin problem \cite{kuo2023tension, garcia2025immersed}, the tension is uniquely determined for any non-intersecting closed centerline shape, including the planar circle (see Remark \ref{rem:Peskin_TDP}). Notably, the slender body free boundary problem does not have an analogue to the area conservation constraint of 2D Peskin, so the planar steady states are different. In particular, the circle is the only non-intersecting planar steady state of \eqref{eq:evolution}, although the family of non-planar non-intersecting steady states is much richer (corresponding to the 3D Euler elasticae \cite{EulerOriginal,levien2008elastica,matsutani2010euler}).

A crucial feature of the tension decomposition \eqref{eq:tau_bound} is the fact that only the tangential part of the data $\bm{g}$ appears in the main term $\mc{G}_0$. We will exploit this in the slender body free boundary problem \eqref{eq:SB_FBP1}-\eqref{eq:SB_FBP4}, where we consider $\bm{g}=\X_{ssss}$ for $\X$ satisfying the inextensibility constraint $\abs{\X_s}^2=1$. By differentiating this constraint in $s$, we have that $\be_{\rm t}\cdot\X_{ssss}=\X_s\cdot\X_{ssss}=-3\X_{sss}\cdot\X_{ss}$. In particular, the tangential direction of $\X_{ssss}$ is more regular than it first appears to be, and $\mc{G}_0[\X_{ssss}]$ can be absorbed into the more regular terms $\mc{G}_+[\X_{ssss}]$. This property is essential for closing our fixed point argument. 
\begin{corollary}[Tension determination with $\X_{ssss}$]\label{cor:TDP_Xssss}
Let $\Sigma_\epsilon$ be as in \eqref{eq:SigmaEps}-\eqref{eq:rstar} with centerline $\X\in C^{4,\alpha}(\T)$, $0<\alpha<1$, satisfying the inextensibility constraint $\abs{\X_s}^2=1$. The solution to the tension determination problem \eqref{eq:TDP} with data $\bm{g}=\X_{ssss}$ then satisfies
\begin{equation}\label{eq:tauXssss}
\begin{aligned}
  \tau[\X_{ssss}] &= \tau_\epsilon[\X_{ssss}] + \tau_+[\X_{ssss}]\\
  \norm{\tau_\epsilon[\X_{ssss}]}_{C^{1,\alpha}(\T)} &\le c(\norm{\X}_{C^{2,\alpha^+}},c_\Gamma)\,\epsilon^{1-\alpha^+}\abs{\log\epsilon}^3\norm{\X}_{C^{4,\alpha}(\T)}\\
  \norm{\tau_+[\X_{ssss}]}_{C^{1,\gamma}(\T)} &\le c(\epsilon,\norm{\X}_{C^{3,\alpha}},c_\Gamma)\norm{\X}_{C^{4,\alpha}(\T)}
\end{aligned}
\end{equation}
for $\alpha^+>\alpha$ and any $\alpha<\gamma<1$. 

Moreover, given two nearby filaments with centerlines $\X^{(a)}$ and $\X^{(b)}$ in $C^{4,\alpha}(\T)$, the difference between the corresponding solutions to the tension determination problem satisfy 
\begin{equation}\label{eq:tauXssss_lip}
\begin{aligned}
  &\norm{\tau_\epsilon^{(a)}[\X_{ssss}^{(a)}]-\tau_\epsilon^{(b)}[\X_{ssss}^{(b)}]}_{C^{1,\alpha}(\T)} \\
  &\quad\le c(\|\X^{(a)}\|_{C^{4,\alpha}},\|\X^{(b)}\|_{C^{2,\alpha^+}},c_\Gamma)\,\epsilon^{1-\alpha^+}\abs{\log\epsilon}^3\norm{\X^{(a)}-\X^{(b)}}_{C^{4,\alpha}(\T)}\\
  &\norm{\tau_+^{(a)}[\X_{ssss}^{(a)}]-\tau_+^{(b)}[\X_{ssss}^{(b)}]}_{C^{1,\gamma}(\T)} \\
  &\quad \le c(\epsilon,\|\X^{(a)}\|_{C^{4,\alpha}},\|\X^{(b)}\|_{C^{2,\alpha^+}},c_\Gamma)\norm{\X^{(a)}-\X^{(b)}}_{C^{4,\alpha}(\T)}\,.
\end{aligned}
\end{equation}
\end{corollary}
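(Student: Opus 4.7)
The plan is to view this corollary as a direct specialization of Theorem \ref{thm:TDP} to the data $\bm{g}=\X_{ssss}$, then exploit the fact that the inextensibility constraint forces the tangential component $\be_{\rm t}\cdot\X_{ssss}$ to be one derivative more regular than $\X_{ssss}$ itself. Since the main term $\mc{G}_0$ in the tension decomposition \eqref{eq:tau_bound} depends only on this tangential component, this extra regularity will promote $\mc{G}_0[\X_{ssss}]$ out of the "main part" and allow us to absorb it into the smoother remainder $\tau_+$.

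First I would differentiate $\abs{\X_s}^2=1$ three times in $s$, using $\X_s\cdot\X_{ss}=0$, $\X_s\cdot\X_{sss}=-\abs{\X_{ss}}^2$, and the chain rule, to obtain the pointwise identity
\begin{equation*}
  \be_{\rm t}\cdot\X_{ssss} = \X_s\cdot\X_{ssss} = -3\,\X_{ss}\cdot\X_{sss}\,.
\end{equation*}
Since $\X\in C^{4,\alpha}(\T)$, the product $\X_{ss}\cdot\X_{sss}$ is a product of $C^{2,\alpha}$ and $C^{1,\alpha}$ functions, hence lies in $C^{1,\alpha}(\T)$, with norm controlled by a polynomial in $\norm{\X}_{C^{3,\alpha}}$ times $\norm{\X}_{C^{4,\alpha}}$. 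Thus the tangential component of the data is a full derivative smoother than the generic bound.

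Next I apply Theorem \ref{thm:TDP} with $\bm{g}=\X_{ssss}\in C^{0,\alpha}(\T)$ to obtain the decomposition $\tau[\X_{ssss}] = \mc{G}_0[\X_{ssss}] + \mc{G}_\epsilon[\X_{ssss}] + \mc{G}_+[\X_{ssss}]$. The $\mc{G}_\epsilon$ and $\mc{G}_+$ bounds transfer directly: I set $\tau_\epsilon:=\mc{G}_\epsilon[\X_{ssss}]$, which satisfies the stated $C^{1,\alpha}$ bound via $\norm{\X_{ssss}}_{C^{0,\alpha}}\le\norm{\X}_{C^{4,\alpha}}$, and I place $\mc{G}_+[\X_{ssss}]$ into $\tau_+$. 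To handle $\mc{G}_0[\X_{ssss}]$, I plug the identity $\be_{\rm t}\cdot\X_{ssss}=-3\X_{ss}\cdot\X_{sss}\in C^{1,\alpha}(\T)$ into the explicit formula for $\mc{G}_0$ from \eqref{eq:tau_bound} and trace regularity through: the straight-filament tangential NtD operator $\overline{\mc{L}}_\epsilon^{\rm tang}$ acts like a first-order smoothing operator on its mean-zero input (as will be made precise in section \ref{sec:straight}), so $\overline{\mc{L}}_\epsilon^{\rm tang}[\be_{\rm t}\cdot\X_{ssss}-\text{avg}]$ gains a derivative to reach $C^{2,\alpha}$, and then $\p_s$ and the resolvent $(I-\overline{\mc{L}}_\epsilon^{\rm tang}\p_{ss})^{-1}$ leave us with something of regularity at least $C^{1,\alpha}$ whose second $s$-derivative is itself controlled via the equation $u-\overline{\mc{L}}_\epsilon^{\rm tang}u_{ss}=f$. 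This excess regularity embeds into $C^{1,\gamma}(\T)$ for any $\gamma<1$, with constant depending on $\epsilon$ and $\norm{\X}_{C^{3,\alpha}}$; I then absorb $\mc{G}_0[\X_{ssss}]$ into $\tau_+$.

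For the Lipschitz bounds \eqref{eq:tauXssss_lip}, I use the standard splitting
\begin{equation*}
  \mc{G}_\ast^{(a)}[\X_{ssss}^{(a)}] - \mc{G}_\ast^{(b)}[\X_{ssss}^{(b)}] = \mc{G}_\ast^{(a)}[\X_{ssss}^{(a)}-\X_{ssss}^{(b)}] + \big(\mc{G}_\ast^{(a)}-\mc{G}_\ast^{(b)}\big)[\X_{ssss}^{(b)}]
\end{equation*}
for $\ast\in\{\epsilon,+\}$: the first term is handled by the size bounds \eqref{eq:tau_bound} applied to $\X^{(a)}_{ssss}-\X^{(b)}_{ssss}$, and the second by the Lipschitz bounds \eqref{eq:tau_bound_lip}. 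For the $\mc{G}_0$ contribution that was absorbed into $\tau_+$, I use the analogous splitting together with the Lipschitz-in-$\X$ version of the inextensibility identity,
\begin{equation*}
  \be_{\rm t}^{(a)}\cdot\X_{ssss}^{(a)} - \be_{\rm t}^{(b)}\cdot\X_{ssss}^{(b)} = -3\big(\X_{ss}^{(a)}\cdot\X_{sss}^{(a)}-\X_{ss}^{(b)}\cdot\X_{sss}^{(b)}\big)\,,
\end{equation*}
whose $C^{1,\alpha}$ norm is bounded by a polynomial in the $C^{4,\alpha}$ norms of $\X^{(a)},\X^{(b)}$ times $\norm{\X^{(a)}-\X^{(b)}}_{C^{4,\alpha}}$. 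The main obstacle is essentially bookkeeping: confirming that the absorbed $\mc{G}_0$ piece genuinely sits in $C^{1,\gamma}$ (and its difference in $C^{1,\gamma}$) without spoiling the explicit $\epsilon$-dependence of $\tau_\epsilon$, which requires using the mapping properties of the resolvent $(I-\overline{\mc{L}}_\epsilon^{\rm tang}\p_{ss})^{-1}$ established in section \ref{sec:straight}; everything else is a routine chase through Theorem \ref{thm:TDP}.
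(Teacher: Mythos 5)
Your proposal is correct and follows essentially the same route as the paper: differentiate $\abs{\X_s}^2=1$ to obtain $\be_{\rm t}\cdot\X_{ssss}=-3\X_{ss}\cdot\X_{sss}$, set $\tau_\epsilon=\mc{G}_\epsilon[\X_{ssss}]$, and absorb $\mc{G}_0[\X_{ssss}]$ (estimated via Lemma \ref{lem:TDP_auxlem} and Lemma \ref{lem:straight_Leps} after substituting the identity) together with $\mc{G}_+[\X_{ssss}]$ into $\tau_+$, then treat the Lipschitz bounds by the same substitution and the bounds of Theorem \ref{thm:TDP}. The only slight overreach is claiming $C^{1,\alpha}$ for $\X_{ss}\cdot\X_{sss}$ and $C^{2,\alpha}$ for $\mc{G}_0[\X_{ssss}]$; the paper needs, and only establishes, the weaker $C^{0,\gamma}$ and $C^{1,\gamma}$ estimates, which suffice and in fact keep the constants in the form stated in the corollary.
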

A full proof is given in section \ref{subsec:cor_TDP_Xssss}.

To arrive at the tension decomposition of Theorem \ref{thm:TDP} and bounds of Corollary \ref{cor:TDP_Xssss}, we will require more precise information about the mapping properties of the slender body NtD map $\mc{L}_\epsilon$, as this forms the backbone of the tension determination problem \eqref{eq:TDP}. 
For this, we turn to a detailed analysis of the operator $\mc{L}_\epsilon$ from our previous work \cite{ohm2024free,ohm2025angle}. As is typical for similar free boundary problems, the analysis of \eqref{eq:evolution} relies on finding a geometry for which the map $\mc{L}_\epsilon$ is straightforward to understand. In this case, the simplest geometry for which $\mc{L}_\epsilon$ can be defined is a straight cylinder. The proofs of both Theorem \ref{thm:TDP} and Theorem \ref{thm:main} use that we may extract the behavior about a straight cylinder as the main behavior of $\mc{L}_\epsilon$ about a curved filament. We provide the details of this decomposition in the next section.

\section{SB NtD map about the straight cylinder}\label{sec:straight}
Our analysis of the evolution \eqref{eq:evolution} relies on the fact that the slender body NtD map simplifies greatly for a straight cylinder with periodic boundary conditions at the ends. In particular, the behavior of the operator in the tangential and normal directions to the cylinder centerline completely decouples, and moreover is given by explicit Fourier multipliers. 
Throughout, letting 
\begin{equation}\label{eq:Cepsilon} 
\mc{C}_\epsilon :=  \big\{\bx\in \R^2\times\T \; : \; \bx = s\be_z + \epsilon\big(\cos\theta\be_x+\sin\theta\be_y\big)\,,  \; s\in\T\,, \; \theta\in2\pi\T \big\}
\end{equation}
denote the straight cylinder with radius $\epsilon$, we will use $\overline{\mc{L}}_\epsilon$ to denote the slender body NtD map along $\mc{C}_\epsilon$. Here it will also be convenient to separately denote the tangential and normal directions of the map $\overline{\mc{L}}_\epsilon$ along $\mc{C}_\epsilon$ as
\begin{equation}\label{eq:tang_and_nor}
  \overline{\mc{L}}_\epsilon^{\rm tang}[\cdot] = \be_z\cdot\overline{\mc{L}}_\epsilon [\be_z\,\cdot]\,, \quad 
  \overline{\mc{L}}_\epsilon^{\rm nor}[\cdot] = \be_x\cdot\overline{\mc{L}}_\epsilon [\be_x\,\cdot] + \be_y\cdot\overline{\mc{L}}_\epsilon [\be_y\,\cdot]\,.
\end{equation} 
As calculated in \cite{inverse}, we have that 
\begin{equation}\label{eq:eval_prob}
\overline{\mc{L}}_\epsilon^{\rm tang}[e^{2\pi iks}] = m_{\epsilon,{\rm t}}(k)e^{2\pi iks}\,, \qquad 
\overline{\mc{L}}_\epsilon^{\rm nor}[e^{2\pi iks}] = m_{\epsilon,{\rm n}}(k)e^{2\pi iks}\,, 
\end{equation} 
where the multipliers $m_{\epsilon,{\rm t}}(k)$ and $m_{\epsilon,{\rm n}}(k)$ are given by the following explicit expressions.
\begin{proposition}[Slender body NtD spectrum \cite{inverse}]\label{prop:Leps_spectrum}
The eigenvalues $m_{\epsilon,{\rm t}}(k)$, $m_{\epsilon,{\rm n}}(k)$ of the straight slender body NtD map $\overline{\mc{L}}_\epsilon$ satisfying \eqref{eq:eval_prob} are given by
\begin{align}
\label{eq:eigsT}
m_{\epsilon,{\rm t}}(k) &= \frac{2K_0K_1 + 2\pi\epsilon \abs{k} \big( K_0^2 - K_1^2 \big) }{ 8\pi^2\epsilon \abs{k} K_1^2} \\
\label{eq:eigsN}
m_{\epsilon,{\rm n}}(k) &= 
\frac{2K_0K_1K_2 + 2\pi\epsilon \abs{k} \big(K_1^2(K_0+K_2)-2K_0^2K_2 \big)}{4\pi^2\epsilon \abs{k}\big(4K_1^2K_2+2\pi\epsilon \abs{k} K_1(K_1^2-K_0K_2)\big)}
\end{align}
where each $K_j=K_j(2\pi\epsilon \abs{k})$, $j=0,1,2$, is a $j^{\text th}$ order modified Bessel function of the second kind. 
\end{proposition}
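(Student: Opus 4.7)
The plan is to solve the slender body boundary value problem \eqref{eq:SB_PDE} explicitly on the exterior of the straight cylinder $\mc{C}_\epsilon$ and then read off the multipliers by evaluating the velocity on $\p\mc{C}_\epsilon$. Since the cylinder is invariant under translation in $s$ and rotation in the cross-sectional angle $\theta$, I would work in cylindrical coordinates $(r,\theta,z)$ with $z=s$ and expand all quantities in Fourier series in $s$. By linearity, it suffices to consider a single axial mode, and by the rotational symmetry of the cylinder it is enough to treat the two cases $\bm{f}(s)=\be_z e^{2\pi iks}$ and $\bm{f}(s)=\be_x e^{2\pi iks}$; the $\be_y$ case follows from the latter by a $\pi/2$ rotation in $\theta$. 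This reduction immediately yields the diagonal, decoupled tangential/normal structure asserted in \eqref{eq:eval_prob}.

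Next, for each fixed $k\ne 0$, I would set up the Stokes system for the Fourier coefficients in cylindrical coordinates. The pressure satisfies $\Delta p=0$, so it admits a representation as a sum of terms $K_m(2\pi\abs{k}r)\,e^{im\theta}$ (the decaying radial solutions), and then each velocity component solves an inhomogeneous modified Bessel equation whose homogeneous part has the same $K_m$ decaying solutions. Tangential data $\be_z e^{2\pi iks}$ excites only the axisymmetric ($m=0$) mode, so $u_\theta\equiv 0$ and $(u_r,u_z,p)$ are independent of $\theta$; normal data $\be_x e^{2\pi iks}$ excites only the $m=\pm 1$ modes, with explicit $\cos\theta$ and $\sin\theta$ angular structure. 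In each case the solution is parametrized by a small number of scalar coefficients multiplying $K_0,K_1,K_2$ evaluated at $r=\epsilon$.

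The two boundary conditions in \eqref{eq:SB_PDE} then become a small linear system for these coefficients. On $\p\mc{C}_\epsilon$ one has $\mc{J}_\epsilon=\epsilon$ and $\bm{n}=\be_r$, so the angle-averaged Neumann condition amounts to projecting the surface stress $\bm{\sigma}[\bu]\be_r$ onto the $m=0$ part of its Fourier expansion in $\theta$ and equating to $\bm{f}$. The geometric constraint that $\bu\big|_{\p\mc{C}_\epsilon}$ be independent of $\theta$ is automatic in the axisymmetric (tangential) case, but in the normal case is a nontrivial equation forcing a specific linear combination of the $m=\pm 1$ velocity coefficients to vanish at $r=\epsilon$. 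Combining this with the averaged Neumann condition closes the system. The zero mode $k=0$ must be handled separately; a direct 2D Stokes calculation around a cross-sectional disk recovers the expected matching with the $k\to 0$ limit of the formulas.

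Solving these small linear systems yields rational expressions in $K_0,K_1,K_2$ at argument $2\pi\epsilon\abs{k}$, and evaluating $\be_z\cdot\bu$ (respectively $\be_x\cdot\bu$) at $r=\epsilon$ gives the eigenvalues $m_{\epsilon,\rm t}(k)$ and $m_{\epsilon,\rm n}(k)$ in \eqref{eq:eigsT}-\eqref{eq:eigsN}. The main obstacle I anticipate is the normal case: the coupling of $u_r$, $u_\theta$, and $p$ through incompressibility and the momentum equations makes the Bessel-function algebra genuinely tedious, and the $\theta$-independence constraint on the Dirichlet trace has to be combined carefully with the averaged Neumann condition to pin down all unknowns simultaneously. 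Once the system is set up, the Bessel recurrences $K_m'(x)=-\tfrac{1}{2}(K_{m-1}(x)+K_{m+1}(x))$ and $K_{m+1}(x)-K_{m-1}(x)=\tfrac{2m}{x}K_m(x)$ are the key identities that collapse the intermediate expressions into the stated closed forms.
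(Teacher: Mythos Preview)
Your proposal outlines the correct and essentially only reasonable approach: Fourier decompose in $s$, separate variables in $(r,\theta)$, solve the resulting modified Bessel ODEs for the decaying solutions, and impose the angle-averaged Neumann condition together with the $\theta$-independence constraint at $r=\epsilon$. This is indeed how the formulas are derived.

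That said, the paper does not actually prove Proposition~\ref{prop:Leps_spectrum}; it simply quotes the result from the earlier work \cite{inverse}, where this explicit computation is carried out. So there is no ``paper's own proof'' to compare against here. Your outline is consistent with what that reference does, and the obstacles you flag (the coupling in the normal case and the use of the Bessel recurrences to simplify) are exactly the points where the algebra is nontrivial.
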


The mapping properties of $\overline{\mc{L}}_\epsilon$ may be obtained directly from the expressions in Proposition \ref{prop:Leps_spectrum} and are studied in detail in \cite{ohm2024free}. In particular, we have the following $\epsilon$-dependent result. 
\begin{lemma}[Mapping properties of $\overline{\mc{L}}_\epsilon$ \cite{ohm2024free}]\label{lem:straight_Leps}
Let $\overline{\mc{L}}_\epsilon$ denote the slender body NtD map \eqref{eq:SB_NtD} along the straight filament $\mc{C}_\epsilon$. Given data $\bm{f}(s)\in C^{0,\alpha}(\T)$ along $\mc{C}_\epsilon$ with $\int_\T\bm{f}(s)\,ds=0$, we have 
\begin{equation}\label{eq:holder_NtD}
\norm{\overline{\mc{L}}_\epsilon[\bm{f}]}_{C^{1,\alpha}(\T)} \le c\abs{\log\epsilon}\big(\norm{\bm{f}}_{L^\infty(\T)}+\epsilon^{-1}\abs{\bm{f}}_{\dot C^{0,\alpha}(\T)} \big)\,.
\end{equation}
\end{lemma}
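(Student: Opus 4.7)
The plan is to work directly with the Fourier multiplier representation from Proposition \ref{prop:Leps_spectrum}, extract sharp asymptotics for $m_{\epsilon,\mathrm t}(k)$ and $m_{\epsilon,\mathrm n}(k)$, and then transfer these into convolution-kernel estimates that yield the H\"older bound \eqref{eq:holder_NtD}.

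The first step is the asymptotic analysis of the multipliers in terms of $x:=2\pi\epsilon|k|$. Using the standard small-argument Bessel expansions $K_0(x)=-\log(x/2)-\gamma+O(x^2|\log x|)$, $K_1(x)=1/x+O(x|\log x|)$, $K_2(x)=2/x^2+O(|\log x|)$, and carrying out the algebraic cancellations in \eqref{eq:eigsT}--\eqref{eq:eigsN}, I would obtain, for $0<x\le 1$,
\[ m_{\epsilon,\star}(k)=\tfrac{c_\star}{2\pi}\bigl|\log(2\pi\epsilon|k|)\bigr|+O(1),\quad \star\in\{\mathrm t,\mathrm n\}, \]
so in particular $|m_{\epsilon,\star}(k)|\le c|\log\epsilon|$ uniformly for $|k|\le 1/\epsilon$. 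For $x\ge 1$, the large-argument expansion $K_j(x)=\sqrt{\pi/(2x)}\,e^{-x}(1+\tfrac{4j^2-1}{8x}+\cdots)$ leads, after further cancellations, to $m_{\epsilon,\star}(k)=c_\star'/(\epsilon|k|)+O((\epsilon|k|)^{-2})$. Combining the two regimes yields the uniform symbol bound $|m_{\epsilon,\star}(k)|\le c|\log\epsilon|\min(1,(\epsilon|k|)^{-1})$, with analogous bounds for finite differences in $k$.

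The second step is to pass to the periodized kernel $K_\epsilon(y)=\sum_{k\neq 0}m_\epsilon(k)e^{2\pi iky}$, so that $\overline{\mc L}_\epsilon[\bm f]=K_\epsilon*\bm f$. Splitting the Fourier series at $|k|\sim 1/\epsilon$ and using the symbol bounds above together with summation-by-parts, I would obtain pointwise size estimates for $K_\epsilon$ and $K_\epsilon'$ outside the scale $\epsilon$, with appropriate regularization inside. The $|\log\epsilon|\|\bm f\|_{L^\infty}$ portion of \eqref{eq:holder_NtD} is then produced from the uniform symbol bound $|m_\epsilon(k)|\le c|\log\epsilon|$, applied for instance through a Littlewood--Paley decomposition combined with Bernstein's inequality. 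For the derivative, the zero-mean assumption on $\bm f$ allows the symmetrization
\[ \p_s\overline{\mc L}_\epsilon[\bm f](s)=\int_\T K_\epsilon'(s-s')\bigl(\bm f(s')-\bm f(s)\bigr)\,ds', \]
so that the H\"older modulus $|\bm f(s')-\bm f(s)|\lesssim|\bm f|_{\dot C^{0,\alpha}}|s-s'|^\alpha$ compensates for the nonintegrable singularity of $K_\epsilon'$: one then estimates $\int_\T|K_\epsilon'(y)||y|^\alpha\,dy\lesssim|\log\epsilon|\epsilon^{-1}$, producing the $\epsilon^{-1}|\bm f|_{\dot C^{0,\alpha}}$ contribution. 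A standard near/far decomposition at scale $|s-\wt s|$ then yields the H\"older seminorm bound for $\p_s\overline{\mc L}_\epsilon[\bm f]$ in the same manner.

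The main difficulty is the careful matching of the two asymptotic regimes around the transition $|k|\sim 1/\epsilon$, ensuring that the factor $|\log\epsilon|$ appears only once and that the factor $\epsilon^{-1}$ appears only in front of $|\bm f|_{\dot C^{0,\alpha}}$ in \eqref{eq:holder_NtD}. This separation reflects the fact that $\overline{\mc L}_\epsilon$ smooths one full derivative only at high frequencies $|k|\gtrsim 1/\epsilon$; for $|k|\lesssim 1/\epsilon$ the multiplier merely decreases logarithmically, and it is the H\"older regularity of $\bm f$ that compensates for the missing smoothing in that regime.
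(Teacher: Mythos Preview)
Your symbol asymptotics are correct and match the bounds \eqref{eq:mt_bds} that the paper imports from \cite{ohm2024free}. The paper does not prove this lemma here; however, the machinery it would use is displayed in Appendix~\ref{sec:TDP_auxlem_pf} for the closely related Lemma~\ref{lem:TDP_auxlem}: one works with the Besov characterization $C^{k,\alpha}=B^{k+\alpha}_{\infty,\infty}$, localizes the multiplier via the dyadic cutoffs $\phi_j$, and bounds each piece by $\|M_j\|_{L^1}$ through \eqref{eq:L1_Mj}. The split at $j_\epsilon=\log_2(1/(2\pi\epsilon))$ then separates the low-frequency regime (where $\|M_j\|_{L^1}\lesssim|\log\epsilon|$) from the high-frequency regime (where $\|M_j\|_{L^1}\lesssim\epsilon^{-1}2^{-j}$), and summing against the Besov weights $2^{j(1+\alpha)}$ gives \eqref{eq:holder_NtD} directly.

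Your route is the classical physical-space one: estimate the periodized kernel $K_\epsilon$ and its derivative pointwise, then use the symmetrization $\p_s\overline{\mc L}_\epsilon[\bm f](s)=\int K_\epsilon'(s-s')(\bm f(s')-\bm f(s))\,ds'$. This is workable and rests on the same symbol bounds, but it is more delicate at the matching scale $|y|\sim\epsilon$, where the low- and high-frequency parts of $K_\epsilon'$ interact; the Littlewood--Paley route sidesteps this entirely because each $M_j$ is handled in isolation. Two small imprecisions in your sketch: first, the uniform bound $|m_\epsilon(k)|\le c|\log\epsilon|$ alone cannot produce any part of a $C^{1,\alpha}$ estimate---it controls only the $L^\infty$ and lowest-frequency contributions, and the gain of a derivative comes exclusively from the $(\epsilon|k|)^{-1}$ decay; second, the mean-zero hypothesis is not what enables the symmetrization (that follows from $\int_\T K_\epsilon'=0$ by periodicity) but is needed because $m_\epsilon(0)$ is singular. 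Neither point is fatal, but the dyadic approach makes the bookkeeping cleaner.
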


Here, in order to solve the tension determination problem \eqref{eq:TDP}, we will require some additional information about the mapping properties of $\overline{\mc{L}}_\epsilon$. We show the following.
\begin{lemma}[Additional straight filament mapping properties]\label{lem:TDP_auxlem}
Let $\overline{\mc{L}}_\epsilon^{\rm tang}$ denote the tangential direction \eqref{eq:tang_and_nor} of the slender body NtD map along the straight filament $\mc{C}_\epsilon$. Given $h(s)\in C^{\ell,\alpha}(\T)$ for nonnegative integer $\ell$ and $0<\alpha<1$, the operators $(I- \overline{\mc{L}}_\epsilon^{\rm tang}\p_{ss})^{-1}$ and $(I- \overline{\mc{L}}_\epsilon^{\rm tang}\p_{ss})^{-1}\p_s$ satisfy
\begin{equation}\label{eq:M1defs}
\begin{aligned}
  (I- \overline{\mc{L}}_\epsilon^{\rm tang}\p_{ss})^{-1}h &= \mc{M}_{1,\epsilon}[h] + \mc{M}_{1,+}[h]\,,\\
  \norm{\mc{M}_{1,\epsilon}[h]}_{C^{\ell+1,\alpha}(\T)} &\le c\,\epsilon\norm{h}_{C^{\ell,\alpha}(\T)} \\
  \norm{\mc{M}_{1,+}[h]}_{C^{\ell+2,\alpha}(\T)} &\le c(\epsilon)\norm{h}_{C^{\ell,\alpha}(\T)}\,,
\end{aligned}  
\end{equation}
\begin{equation}\label{eq:M2defs}
\begin{aligned}
  (I- \overline{\mc{L}}_\epsilon^{\rm tang}\p_{ss})^{-1}\p_sh &= \mc{M}_{2,\epsilon}[h] + \mc{M}_{2,+}[h]\,,\\
  \norm{\mc{M}_{2,\epsilon}[h]}_{C^{\ell,\alpha}(\T)} &\le c\,\epsilon\norm{h}_{C^{\ell,\alpha}(\T)} \\
  \norm{\mc{M}_{2,+}[h]}_{C^{\ell+1,\alpha}(\T)} &\le c(\epsilon)\norm{h}_{C^{\ell,\alpha}(\T)}\,.
\end{aligned}  
\end{equation}
\end{lemma}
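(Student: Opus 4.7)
By Proposition \ref{prop:Leps_spectrum}, $\overline{\mc{L}}_\epsilon^{\rm tang}$ is the Fourier multiplier on $\T$ with symbol $m_{\epsilon,\rm t}(k)$, so $I - \overline{\mc{L}}_\epsilon^{\rm tang}\p_{ss}$ is the Fourier multiplier with symbol
\[
A_\epsilon(k) := 1 + (2\pi k)^2\,m_{\epsilon,\rm t}(k)\,.
\]
The plan is to derive sharp asymptotics for $A_\epsilon(k)$ from the explicit Bessel-function formula \eqref{eq:eigsT}, and then to split $1/A_\epsilon(k)$ into an explicit leading symbol of order $-1$ with prefactor $\epsilon$ plus a smoother remainder of order $-2$. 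Using the classical large-argument expansion $K_j(z)\sim\sqrt{\pi/(2z)}\,e^{-z}(1+O(1/z))$ at $z = 2\pi\epsilon|k|$, a direct calculation from \eqref{eq:eigsT} yields
\[
m_{\epsilon,\rm t}(k) = \frac{1}{8\pi^2\epsilon|k|}\bigl(1 + O(1/z)\bigr) \quad\text{as } z\to\infty\,,
\]
so $A_\epsilon(k)\sim |k|/(2\epsilon)$ for $|k|\gg 1/\epsilon$. The small-argument expansions of $K_j$ give $m_{\epsilon,\rm t}(k) = O(|\log\epsilon|)$ for $z\ll 1$, and combined with the nonnegativity $m_{\epsilon,\rm t}(k)\ge 0$ (which follows from \eqref{eq:energyID} applied on the straight cylinder), this yields $A_\epsilon(k)\ge 1$ uniformly in $k\in\Z$. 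Hence $1/A_\epsilon(k)$ is bounded by $c\min\{1,\epsilon/|k|\}$ with analogous bounds on its $k$-derivatives.

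With these symbol bounds in hand, I would decompose $1/A_\epsilon(k) = \sigma_\epsilon(k) + \rho_\epsilon(k)$ where $\sigma_\epsilon$ is an explicit symbol matching the high-frequency leading order; for concreteness, take $\sigma_\epsilon(k) = 2\epsilon\,\chi(k)/(|k|+1)$ with $\chi$ a smooth cutoff equal to $1$ for $|k|\ge 1$ and vanishing at $k=0$, and set $\rho_\epsilon(k) := 1/A_\epsilon(k) - \sigma_\epsilon(k)$. The Bessel asymptotics above then yield
\[
|\sigma_\epsilon(k)| + (1+|k|)|\sigma_\epsilon'(k)| \le \frac{c\,\epsilon}{1+|k|}\,, \qquad
|\rho_\epsilon(k)| + (1+|k|)|\rho_\epsilon'(k)| \le \frac{c(\epsilon)}{(1+|k|)^2}\,,
\]
where $c(\epsilon)$ absorbs the possibly $\epsilon$-singular behavior in the transition regime $|k|\sim 1/\epsilon$. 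Defining $\mc{M}_{1,\epsilon}$ and $\mc{M}_{1,+}$ as the Fourier multiplier operators with symbols $\sigma_\epsilon$ and $\rho_\epsilon$, the estimates \eqref{eq:M1defs} then follow from standard Hölder-space multiplier theorems on $\T$: $\sigma_\epsilon$ defines an order $-1$ operator with $\epsilon$ prefactor, gaining one derivative with $\epsilon$ smallness, while $\rho_\epsilon$ defines an order $-2$ operator, gaining two derivatives with $\epsilon$-dependent constant. The bounds \eqref{eq:M2defs} follow by the same argument applied to the symbols $2\pi i k\,\sigma_\epsilon(k)$ and $2\pi i k\,\rho_\epsilon(k)$, which reduces the order of each piece by one; note that the resulting $\mc{M}_{2,\epsilon}$ symbol tends to $\pm 2\pi i \epsilon$ at infinity, so the corresponding operator is $\epsilon$ times a (bounded) zeroth-order multiplier, consistent with the stated estimate.

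The main technical obstacle is the Bessel-function asymptotic analysis of \eqref{eq:eigsT} with uniform control on $k$-derivatives of $m_{\epsilon,\rm t}(k)$ across all $z$, needed so that $\rho_\epsilon$ genuinely decays like $1/|k|^2$ with matching derivative bounds. The transition regime $|k|\sim 1/\epsilon$ is delicate since neither the small-$z$ nor the large-$z$ expansion applies directly; however, since the constant $c(\epsilon)$ in the bounds for $\mc{M}_{1,+}$ and $\mc{M}_{2,+}$ need not be explicit in $\epsilon$, strict positivity and continuity of $A_\epsilon$ on compact ranges of $z$ suffice to close this gap. The final passage from symbol bounds to Hölder-space estimates is routine and reduces on $\T$ to convolution estimates with explicit periodic kernels.
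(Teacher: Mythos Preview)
Your approach is correct and leads to the stated bounds, but the paper's decomposition is different in a way worth noting. You split the multiplier $1/A_\epsilon$ by subtracting an explicit leading symbol $\sigma_\epsilon(k)=2\epsilon\chi(k)/(|k|+1)$, so that $\mc{M}_{1,\epsilon}$ is a genuine order $-1$ pseudodifferential operator with prefactor $\epsilon$ and $\mc{M}_{1,+}$ is a genuine order $-2$ operator. The paper instead splits by a \emph{frequency threshold}: with $j_\epsilon = |\log(2\pi\epsilon)|/\log 2$, it defines $\mc{M}_{1,\epsilon}$ as the Littlewood--Paley projection of $(I-\overline{\mc{L}}_\epsilon^{\rm tang}\p_{ss})^{-1}$ onto frequencies $j\ge j_\epsilon$ and $\mc{M}_{1,+}$ as the projection onto $j<j_\epsilon$, then estimates each piece directly via the bounds $\|M_j^{(1)}\|_{L^1}\le c\epsilon 2^{-j}$ at high frequencies and $\|M_j^{(1)}\|_{L^1}\le c(1+2^{2j}|\log\epsilon|)^{-1}$ at low frequencies (obtained from the $2^j\sqrt{AB}$ trick using the symbol and its second derivative). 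The paper's route avoids the delicate step you flag as the main obstacle---verifying that $\rho_\epsilon$ is genuinely order $-2$ with matching derivative bounds, which requires checking that the leading $-2\epsilon/|k|^2$ contributions to $\partial_k(1/A_\epsilon)$ and $\partial_k\sigma_\epsilon$ cancel at high frequencies. That cancellation does hold (it follows from the next term in the large-$z$ Bessel expansion), so your argument closes, but the frequency-cutoff approach bypasses it entirely: one only needs the symbol's size in each regime, not a precise asymptotic match. Conversely, your decomposition is more intrinsic (no arbitrary cutoff scale) and makes the pseudodifferential structure of each piece transparent.
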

Note that the $c$ appearing in the bounds for $\mc{M}_{j,\epsilon}$ is an absolute constant, and we do not track the explicit $\epsilon$ dependence in the bounds for $\mc{M}_{j,+}$. 
The proof of Lemma \ref{lem:TDP_auxlem} appears in Appendix~\ref{sec:TDP_auxlem_pf}.

A key to obtaining Theorem \ref{thm:main} is the fact that we may extract $\overline{\mc{L}}_\epsilon$ as the main behavior of the operator $\mc{L}_\epsilon$ about an arbitrarily curved filament satisfying \eqref{eq:star_norm}. In particular, in our previous work \cite{ohm2024free}, we show that the behavior of $\mc{L}_\epsilon(\X)$ for curved, closed $\X$ is that of $\overline{\mc{L}}_\epsilon$ plus remainders which are either smoother or small in $\epsilon$.

To actually formulate this decomposition, we will need a map identifying the tangential direction $\be_{\rm t}(s)$ along a curved filament with the tangential direction $\be_z$ along a straight filament, and likewise for the normal directions. 
Given vector-valued $\bm{g}(s)$ defined about the straight filament $\mc{C}_\epsilon$ and vector-valued $\bm{h}(s)$ defined along a curved filament $\Sigma_\epsilon$, we define an identification map $\Phi$ by
\begin{equation}\label{eq:mapPhi_def}
\begin{aligned}
\Phi\bm{g} &= (\bm{g}\cdot\be_z)\be_{\rm t} + (\bm{g}\cdot\be_x)\be_{\rm n_1} + (\bm{g}\cdot\be_y)\be_{\rm n_2} \\
\Phi^{-1}\bm{h} &= (\bm{h}\cdot\be_{\rm t})\be_z + (\bm{h}\cdot\be_{\rm n_1})\be_x + (\bm{h}\cdot\be_{\rm n_2})\be_y\,.
\end{aligned}
\end{equation}
Here $(\be_{\rm t}(s),\be_{\rm n_1}(s),\be_{\rm n_2}(s))$ form an orthonormal frame along $\X(s)$. The map $\Phi$ is explored in more detail in Appendix \ref{sec:Phi_comm}, including some useful estimates from \cite{ohm2024free}. We emphasize that the introduction of a frame is merely a technical tool for stating the most general decomposition theorem below (Theorem \ref{thm:decomp}), and that the necessary results for analyzing the evolution \eqref{eq:evolution}, namely, Corollary \ref{cor:four_derivs} and Corollary \ref{cor:TDP_Xssss}, hold without reference to a frame. The actual proof of local well-posedness for \eqref{eq:evolution} (Theorem \ref{thm:main}) is thus independent of any frame on the filament centerline. 

Now, due to the mean-zero requirement on the data in Lemma \ref{lem:straight_Leps}, given a vector field $\bm{h}(s)$ defined along the centerline of a curved filament, we will also define the notation 
\begin{equation}\label{eq:subtract_mean}
\bm{h}_0^\Phi := \bm{h}-\Phi\int_\T(\Phi^{-1}\bm{h})\,ds\,.
\end{equation}
In particular, note that $\int_\T\Phi^{-1}\bm{h}_0^\Phi\,ds=0$. 
In the following, we take $\abs{\X}_\star=:c_\Gamma>0$, and when comparing nearby filaments with centerline $\X^{(a)}$, $\X^{(b)}$, we take $c_\Gamma=\min\{\abs{\X^{(a)}}_\star,\abs{\X^{(b)}}_\star\}$. 
We may then state the following decomposition.
\begin{theorem}[Slender body NtD decomposition \cite{ohm2024free}]\label{thm:decomp}
Let $0<\alpha<\gamma<1$, $\bm{f}(s)\in C^{0,\alpha}(\T)$, and consider $\Sigma_\epsilon$ as in \eqref{eq:SigmaEps}-\eqref{eq:rstar} with centerline $\X(s)\in C^{3,\alpha}(\T)$. There exists $\epsilon_{\rm n}=\epsilon_{\rm n}(c_\Gamma,\norm{\X}_{C^{2,\gamma}})>0$ such that for $0<\epsilon\le \epsilon_{\rm n}$, the slender body Neumann-to-Dirichlet operator $\mc{L}_\epsilon$ may be decomposed as 
\begin{equation}\label{eq:thm_NtD_decomp}
\mc{L}_\epsilon[\bm{f}](s) = \big({\bf I} + \mc{R}_{\rm n,\epsilon}\big)\big[\Phi\overline{\mc{L}}_\epsilon[\Phi^{-1}\bm{f}_0^\Phi(s)]\big] +\mc{R}_{\rm n,+}[\bm{f}(s)]\,,
\end{equation}
where the remainder terms satisfy
\begin{equation}\label{eq:thm_NtD_ests}
\begin{aligned}
\norm{\mc{R}_{\rm n,\epsilon}[\bm{g}]}_{C^{1,\alpha}(\T)}&\le c(\norm{\X}_{C^{2,\alpha^+}},c_\Gamma)\,\epsilon^{1-\alpha^+}\abs{\log\epsilon}\norm{\bm{g}}_{C^{1,\alpha}(\T)}\\
\norm{\mc{R}_{\rm n,+}[\bm{f}]}_{C^{1,\gamma}(\T)} &\le c(\epsilon,\norm{\X}_{C^{3,\alpha}},c_\Gamma)\norm{\bm{f}}_{C^{0,\alpha}(\T)}
\end{aligned}
\end{equation}
for any $\alpha^+>\alpha$.
Furthermore, given nearby filaments (in the sense of Lemma \ref{lem:XaXb_C2beta}) with centerlines $\X^{(a)}(s)$, $\X^{(b)}(s)$ in $C^{3,\alpha}(\T)$, we have
\begin{equation}\label{eq:thm_NtD_ests_lip}
\begin{aligned}
&\norm{(\mc{R}_{\rm n,\epsilon}^{(a)}-\mc{R}_{\rm n,\epsilon}^{(b)})[\bm{g}]}_{C^{1,\alpha}(\T)}\\
&\quad \le c(\|\X^{(a)}\|_{C^{2,\alpha^+}},\|\X^{(b)}\|_{C^{2,\alpha^+}},c_\Gamma)\, \epsilon^{1-\alpha^+}\abs{\log\epsilon}\norm{\X^{(a)}-\X^{(b)}}_{C^{2,\alpha^+}}\norm{\bm{g}}_{C^{1,\alpha}(\T)} \\
&\norm{(\mc{R}_{\rm n,+}^{(a)}-\mc{R}_{\rm n,+}^{(b)})[\bm{f}]}_{C^{1,\gamma}(\T)}\\
&\quad \le c(\epsilon,\|\X^{(a)}\|_{C^{3,\alpha}},\|\X^{(b)}\|_{C^{3,\alpha}},c_\Gamma)\norm{\X^{(a)}-\X^{(b)}}_{C^{2,\alpha^+}}\norm{\bm{f}}_{C^{0,\alpha}(\T)}\,.
\end{aligned}
\end{equation}
\end{theorem}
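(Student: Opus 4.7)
The plan is to reduce the curved filament problem to a perturbation of the straight cylinder problem via a tubular-neighborhood diffeomorphism, and then carefully expand.

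I would first introduce curvilinear coordinates in the tubular region $\{\bx:{\rm dist}(\bx,\X)<r_\star\}$: every point is written uniquely as $\X(s)+\rho\cos\theta\,\be_{\rm n_1}(s)+\rho\sin\theta\,\be_{\rm n_2}(s)$. In these coordinates, $\p\Sigma_\epsilon$ corresponds to $\{\rho=\epsilon\}$, which matches the straight cylinder $\p\mc{C}_\epsilon$, and the frame-based map $\Phi$ transfers tangential/normal decompositions. Pulling back the Stokes system through this change of variables produces the flat Stokes operator on $\R^3\backslash\overline{\mc{C}_\epsilon}$ plus geometric correction terms whose coefficients vanish to various orders in $\rho$ and depend on $\X_{ss}$ and derivatives of $\be_{\rm n_1},\be_{\rm n_2}$; the boundary condition pulls back similarly, and the rigid-body mean-subtraction \eqref{eq:subtract_mean} guarantees the transformed data is compatible with Lemma \ref{lem:straight_Leps}.

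Next, I would split the pulled-back velocity as $\bu=\bu_0+\bu_1$, where $\bu_0$ solves the straight-cylinder BVP with data $\Phi^{-1}\bm{f}_0^\Phi$ and $\bu_1$ absorbs the curvature-induced source and the rigid-body correction coming from the removed mean. Tracing on $\p\mc{C}_\epsilon$ and pushing forward by $\Phi$ produces the main term $\Phi\overline{\mc{L}}_\epsilon[\Phi^{-1}\bm{f}_0^\Phi]$, while the trace of $\bu_1$ yields the remainder. To control $\bu_1$, I would split its source into pieces according to how many derivatives of $\bu_0$ they contain and whether their coefficients vanish at $\rho=0$.

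The main obstacle is realizing the two-scale split $\mc{R}_{\rm n,\epsilon}+\mc{R}_{\rm n,+}$ with the stated $\epsilon$-budgets. Sources linear in $\p_s\bu_0$ (together with the non-vanishing curvature weights) are genuinely quasilinear and remain at the same regularity level as the main term; applying Lemma \ref{lem:straight_Leps} to the corresponding correction, the prefactor $\epsilon^{1-\alpha^+}\abs{\log\epsilon}$ is extracted from the $O(\epsilon)$ width of the coordinate neighborhood, one $\abs{\log\epsilon}$ from Lemma \ref{lem:straight_Leps}, and an $\epsilon^{-\alpha^+}$ H\"older embedding loss, producing $\mc{R}_{\rm n,\epsilon}$. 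All remaining sources are more regular: either they have coefficients vanishing to higher order in $\rho$, or they correspond to lower-order curvature terms, or they are the explicit rigid-body mean; for these, interior/boundary Schauder estimates for the straight Stokes system deliver a full derivative of regularity gain (to $C^{1,\gamma}$) but without uniform $\epsilon$ control, giving $\mc{R}_{\rm n,+}$. Finally, for the Lipschitz estimates \eqref{eq:thm_NtD_ests_lip}, I would subtract the pulled-back systems associated with $\X^{(a)}$ and $\X^{(b)}$, write the difference of the coefficients as a $C^{2,\alpha^+}$-Lipschitz (respectively $C^{3,\alpha}$-Lipschitz) function of $\X$, apply the same decomposition to the resulting error equation, and close using the commutator and Lipschitz estimates for $\Phi$ collected in Appendix \ref{sec:Phi_comm}. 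The smallness threshold $\epsilon_{\rm n}$ is set by requiring the tubular chart to be a diffeomorphism (so $\epsilon\le r_\star/4$) and by the need to absorb the $O(\epsilon^{1-\alpha^+}\abs{\log\epsilon})$ corrections inherent in the quasilinear portion of the argument.
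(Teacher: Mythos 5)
The statement is quoted from \cite{ohm2024free}; the present paper supplies no proof of its own, only the citation. In \cite{ohm2024free} (and its Laplace-setting predecessor \cite{ohm2025angle}, on which the Stokes argument is modeled), the decomposition is obtained by a boundary-integral representation of the slender body solution on $\p\Sigma_\epsilon$, an explicit $(s,\theta)$-parameterization of the surface kernels, a pointwise subtraction of the straight-cylinder kernel, and Besov--Littlewood--Paley estimates to separate a quasilinear piece with explicit $O(\epsilon^{1-\alpha^+}|\log\epsilon|)$ smallness from a smoother piece with $\epsilon$-dependent constants. Your proposal---pulling back the exterior Stokes system by a tubular-neighborhood diffeomorphism and treating the curved problem as a domain perturbation of the straight cylinder---is a genuinely different route from the paper's.

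It also has a gap that would need to be repaired before the argument is well-defined. The chart $\bx = \X(s)+\rho\cos\theta\,\be_{\rm n_1}(s)+\rho\sin\theta\,\be_{\rm n_2}(s)$ is a diffeomorphism only on $\{{\rm dist}(\bx,\X)<r_\star\}$; it does not parameterize $\R^3\setminus\overline{\Sigma_\epsilon}$, so ``pulling back the Stokes system through this change of variables'' to produce ``the flat Stokes operator on $\R^3\setminus\overline{\mc{C}_\epsilon}$ plus geometric correction terms'' is not a step you can actually perform on the full BVP. Making this precise requires cutting off the chart at some $\rho\sim r_\star/2$, patching with the unstraightened far field where $|\bu|\to 0$, and handling the resulting transmission/matching conditions---none of which is addressed. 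Moreover the target decomposition has the \emph{multiplicative} structure $(\mathbf{I}+\mc{R}_{\rm n,\epsilon})\bigl[\Phi\overline{\mc{L}}_\epsilon[\Phi^{-1}\bm{f}_0^\Phi]\bigr]+\mc{R}_{\rm n,+}[\bm{f}]$; a naive additive split $\bu=\bu_0+\bu_1$ driven by curvature-induced sources does not by itself produce a correction acting on the straight-cylinder output $\Phi\overline{\mc{L}}_\epsilon[\Phi^{-1}\bm{f}_0^\Phi]$ rather than on $\bm{f}$---you would need a Neumann-series or fixed-point inversion of the near-field perturbation to reorganize the remainder into that form, and that is exactly where the $\epsilon^{1-\alpha^+}|\log\epsilon|$ smallness has to be established \emph{before} you can sum the series, not extracted afterward as your sketch suggests.
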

Note that the $\epsilon$-dependence is explicit in the bound for the remainder terms $\mc{R}_{\rm n,\epsilon}$, but not for the smoother remainder $\mc{R}_{\rm n,+}$.

Theorem \ref{thm:decomp} will play a major role in obtaining the decomposition of Theorem \ref{thm:TDP} for the solution of the tension determination problem. In addition, we make note of a useful corollary from \cite[Corollary 1.6]{ohm2024free} that will allow us to extract a more convenient expression for the main behavior of $\mc{L}_\epsilon$ when the force data involves many derivatives. 
\begin{corollary}[SB NtD decomposition involving derivatives \cite{ohm2024free}]\label{cor:four_derivs}
For $0<\alpha<\gamma<1$, given $\X(s)\in C^{3,\gamma}(\T)$ and data of the form $\bm{f}(s)=\p_s^4\bm{F}$ for some $\bm{F}\in C^{4,\alpha}(\T)$, the decomposition \eqref{eq:thm_NtD_decomp} of $\mc{L}_\epsilon$ simplifies to 
\begin{equation}\label{eq:F_NtD_decomp}
\begin{aligned}
\mc{L}_\epsilon[\p_s^4\bm{F}]&= ({\bf I}+\mc{R}_{\rm n,\epsilon})\big[\overline{\mc{L}}_\epsilon[\p_s^4\bm{F}]\big] +\wt{\mc{R}}_{\rm n,+}[\p_s^4\bm{F}]\,,\\
\norm{\mc{R}_{\rm n,\epsilon}\big[\overline{\mc{L}}_\epsilon[\p_s^4\bm{F}]\big]}_{C^{1,\alpha}(\T)} &\le c(\|\X\|_{C^{2,\alpha^+}},c_\Gamma)\, \epsilon^{1-\alpha^+}\abs{\log\epsilon}\norm{\overline{\mc{L}}_\epsilon[\p_s^4\bm{F}]}_{C^{1,\alpha}(\T)}\\
\norm{\wt{\mc{R}}_{\rm n,+}[\p_s^4\bm{F}]}_{C^{1,\gamma}(\T)} &\le c(\epsilon,\|\X\|_{C^{3,\gamma}},c_\Gamma)\norm{\bm{F}}_{C^{4,\alpha}(\T)}\,,
\end{aligned}
\end{equation}
Moreover, given two nearby filaments with centerlines $\X^{(a)}(s)$, $\X^{(b)}(s)$ in $C^{3,\gamma}(\T)$, we have
\begin{equation}\label{eq:F_NtD_ests_lip}
\begin{aligned}
&\norm{(\mc{R}_{\rm n,\epsilon}^{(a)}-\mc{R}_{\rm n,\epsilon}^{(b)})[\overline{\mc{L}}_\epsilon[\p_s^4\bm{F}]]}_{C^{1,\alpha}(\T)} \\
&\quad\le c(\|\X^{(a)}\|_{C^{2,\alpha^+}},\|\X^{(b)}\|_{C^{2,\alpha^+}},c_\Gamma)\, \epsilon^{1-\alpha^+}\abs{\log\epsilon}\norm{\X^{(a)}-\X^{(b)}}_{C^{2,\alpha^+}(\T)}\norm{\overline{\mc{L}}_\epsilon[\p_s^4\bm{F}]}_{C^{1,\alpha}(\T)} \\
&\norm{(\wt{\mc{R}}_{\rm n,+}^{(a)}-\wt{\mc{R}}_{\rm n,+}^{(b)})[\p_s^4\bm{F}]}_{C^{1,\gamma}(\T)}\\
&\quad\le c(\epsilon,\|\X^{(a)}\|_{C^{3,\gamma}},\|\X^{(b)}\|_{C^{3,\gamma}},c_\Gamma)\norm{\X^{(a)}-\X^{(b)}}_{C^{3,\gamma}(\T)}\norm{\bm{F}}_{C^{4,\alpha}(\T)}\,.
\end{aligned}
\end{equation}
\end{corollary}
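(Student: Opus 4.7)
The strategy is to apply Theorem \ref{thm:decomp} directly with $\bm{f} = \p_s^4 \bm{F}$ and then reorganize by replacing the main term $\Phi\overline{\mc{L}}_\epsilon[\Phi^{-1}(\p_s^4\bm{F})_0^\Phi]$ with $\overline{\mc{L}}_\epsilon[\p_s^4\bm{F}]$, absorbing the discrepancy into the smoother remainder $\wt{\mc{R}}_{\rm n,+}$. Both operators act on mean-zero inputs (since $\p_s^4\bm{F}$ and $\Phi^{-1}(\p_s^4\bm{F})_0^\Phi$ are each mean zero by construction), so both are well-defined. Subtracting the claimed identity from \eqref{eq:thm_NtD_decomp} gives
\begin{equation*}
\wt{\mc{R}}_{\rm n,+}[\p_s^4\bm{F}] = \mc{R}_{\rm n,+}[\p_s^4\bm{F}] + ({\bf I}+\mc{R}_{\rm n,\epsilon})\bigl[D[\bm{F}]\bigr],\qquad D[\bm{F}] := \Phi\overline{\mc{L}}_\epsilon[\Phi^{-1}(\p_s^4\bm{F})_0^\Phi] - \overline{\mc{L}}_\epsilon[\p_s^4\bm{F}].
\end{equation*}
The bound $\|\mc{R}_{\rm n,+}[\p_s^4\bm{F}]\|_{C^{1,\gamma}}\le c(\epsilon,\|\X\|_{C^{3,\alpha}},c_\Gamma)\|\bm{F}\|_{C^{4,\alpha}}$ is immediate from \eqref{eq:thm_NtD_ests} together with $\|\p_s^4\bm{F}\|_{C^{0,\alpha}}\le\|\bm{F}\|_{C^{4,\alpha}}$, and similarly the $C^{1,\alpha}$ bound on $\mc{R}_{\rm n,\epsilon}[\overline{\mc{L}}_\epsilon[\p_s^4\bm{F}]]$ follows at once from \eqref{eq:thm_NtD_ests}.

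The core of the argument is therefore the $C^{1,\gamma}$ control of $D[\bm{F}]$. Using $\Phi A\Phi^{-1} - A = \Phi[A,\Phi^{-1}]$ together with the decomposition $\Phi^{-1}(\p_s^4\bm{F})_0^\Phi = \Phi^{-1}\p_s^4\bm{F} - \bar c$ with constant $\bar c = \int_\T \Phi^{-1}\p_s^4\bm{F}\,ds$, one rewrites
\begin{equation*}
D[\bm{F}](s) = \Phi(s)\bigl[\overline{\mc{L}}_\epsilon,\Phi^{-1}\bigr][\p_s^4\bm{F}](s) - \Phi(s)\,\overline{\mc{L}}_\epsilon[\bar c](s).
\end{equation*}
Since $\overline{\mc{L}}_\epsilon$ is convolution by a kernel $K_\epsilon(s-s')$ with a $\log$-type singularity (encoded by the multipliers of Proposition \ref{prop:Leps_spectrum} and reflected in Lemma \ref{lem:straight_Leps}), its commutator with multiplication by $\Phi^{-1}$ has kernel $K_\epsilon(s-s')[\Phi^{-1}(s')-\Phi^{-1}(s)]$, which vanishes to first order at $s=s'$ and hence is smoothing. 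Applied to $\p_s^4\bm{F}$, I would integrate by parts in $s'$, distributing derivatives between $\Phi^{-1}$ (which has $C^{2,\gamma}$ regularity via the frame estimates of Appendix~\ref{sec:Phi_comm}, given $\X\in C^{3,\gamma}$) and the regularized kernel. The resulting expression is controlled in $C^{1,\gamma}$ by $\|\bm{F}\|_{C^{2,\alpha}}$ times an $(\epsilon,\|\X\|_{C^{3,\gamma}},c_\Gamma)$-dependent constant arising from the kernel and frame bounds. The mean correction $\bar c$ is handled analogously: two integrations by parts yield $\bar c = \int_\T(\p_{ss}\Phi^{-1})(\p_{ss}\bm{F})\,ds$, a constant of size $\lesssim\|\X\|_{C^{3,\gamma}}\|\bm{F}\|_{C^{2,\alpha}}$, and the corresponding contribution of $\Phi\,\overline{\mc{L}}_\epsilon[\bar c]$ is also $C^{1,\gamma}$.

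With $D[\bm{F}]\in C^{1,\gamma}$ in hand, $\mc{R}_{\rm n,\epsilon}[D[\bm{F}]]$ retains $C^{1,\gamma}$ regularity: the bound \eqref{eq:thm_NtD_ests} may be rerun with $\gamma$ in place of $\alpha$, since $\X\in C^{3,\gamma}$ provides the required $C^{2,\gamma^+}$ frame control for any $\gamma<\gamma^+<1$, giving $\mc{R}_{\rm n,\epsilon}\colon C^{1,\gamma}\to C^{1,\gamma}$ with constant absorbed into $c(\epsilon,\|\X\|_{C^{3,\gamma}},c_\Gamma)$. The Lipschitz estimates \eqref{eq:F_NtD_ests_lip} follow by the parallel argument applied to the difference of two nearby filaments $\X^{(a)},\X^{(b)}$: invoking \eqref{eq:thm_NtD_ests_lip} for the $\mc{R}_{\rm n,\epsilon}$ and $\mc{R}_{\rm n,+}$ contributions, and establishing an analogous commutator-difference bound for $D^{(a)}[\bm{F}]-D^{(b)}[\bm{F}]$ in which the $C^{3,\gamma}$ norm of $\X^{(a)}-\X^{(b)}$ appears through second derivatives of $\Phi^{-1,(a)}-\Phi^{-1,(b)}$ after integration by parts. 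The main obstacle I anticipate is the kernel-level commutator bookkeeping: performing the integrations by parts with only $C^{2,\gamma}$ regularity of $\Phi^{-1}$ and tracking the precise $\epsilon$-dependence of $K_\epsilon$ (including the $\epsilon^{-1}$ factor at high frequencies visible in \eqref{eq:holder_NtD}), so that all constants collapse cleanly into $c(\epsilon,\|\X\|_{C^{3,\gamma}},c_\Gamma)$ without spoiling the Hölder exponent.
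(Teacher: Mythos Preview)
The paper does not give a proof of this corollary here; it is quoted from \cite[Corollary 1.6]{ohm2024free}. Your overall plan---apply Theorem~\ref{thm:decomp} with $\bm f=\p_s^4\bm F$ and absorb the discrepancy $D[\bm F]=\Phi\overline{\mc L}_\epsilon[\Phi^{-1}(\p_s^4\bm F)_0^\Phi]-\overline{\mc L}_\epsilon[\p_s^4\bm F]$ into the smoother remainder---is the right outline, and the bounds you take directly from \eqref{eq:thm_NtD_ests} are correct. The execution of the $D[\bm F]$ estimate, however, has two real gaps.

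First, your commutator kernel is wrong. The operator $\overline{\mc L}_\epsilon$ is \emph{not} a scalar convolution: by Proposition~\ref{prop:Leps_spectrum} it acts with distinct multipliers $m_{\epsilon,\rm t}\neq m_{\epsilon,\rm n}$ on the $\be_z$ versus $\be_x,\be_y$ components, so its kernel $K_\epsilon$ is diagonal but not a multiple of the identity. Since $\Phi^{-1}(s)$ is a rotation, in general $K_\epsilon\Phi^{-1}(s)\neq\Phi^{-1}(s)K_\epsilon$, and the commutator kernel is actually
\[
K_\epsilon(s-s')\bigl[\Phi^{-1}(s')-\Phi^{-1}(s)\bigr]\;+\;\bigl[K_\epsilon(s-s'),\Phi^{-1}(s)\bigr]\,.
\]
The second matrix-commutator piece does \emph{not} vanish at the diagonal; integrated against $\p_s^4\bm F$ it produces terms of the form $\phi_{ik}(s)\,(\overline{\mc L}_\epsilon^{\rm tang}-\overline{\mc L}_\epsilon^{\rm nor})[(\p_s^4\bm F)_k]$, where $\phi_{ik}$ are entries of $\Phi^{-1}$. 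If $m_{\epsilon,\rm t}-m_{\epsilon,\rm n}$ were only as smoothing as each $m_{\epsilon,\cdot}$, this would land in $C^{1,\alpha}$, not $C^{1,\gamma}$. To reach $C^{1,\gamma}$ you must use that the \emph{difference} multiplier gains an extra order of decay at high frequency, a separate fact you never invoke.

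Second, your splitting $\overline{\mc L}_\epsilon[\Phi^{-1}\p_s^4\bm F]-\overline{\mc L}_\epsilon[\bar c]$ is ill-posed as written: neither $\Phi^{-1}\p_s^4\bm F$ nor the constant $\bar c$ has mean zero, and by Lemma~\ref{lem:straight_Leps} the map $\overline{\mc L}_\epsilon$ is defined only on mean-zero data (its symbol diverges at $k=0$). You correctly observed earlier that $\p_s^4\bm F$ and $\Phi^{-1}(\p_s^4\bm F)_0^\Phi$ are each mean zero, but you cannot then peel off $\bar c$ as a separate input. A cleaner route that avoids both issues is to commute $\p_s$ past $\Phi^{-1}$ one step at a time via \eqref{eq:Phi_commutator_est}, using that $\overline{\mc L}_\epsilon$ commutes with $\p_s$: this keeps every argument of $\overline{\mc L}_\epsilon$ a total $s$-derivative (hence mean zero) and generates lower-order terms of the form $\overline{\mc L}_\epsilon\big[[\p_s,\Phi^{-1}]\p_s^3\bm F\big]$, which are controlled in $C^{1,\gamma}$ since $[\p_s,\Phi^{-1}]\p_s^3\bm F\in C^{1,\alpha}\subset C^{0,\gamma}$.
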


Given the decompositions of Theorem \ref{thm:decomp} and Corollary \ref{cor:four_derivs} for the slender body NtD operator $\mc{L}_\epsilon$, we may now turn to the tension determination problem \eqref{eq:TDP} and the proofs of Theorem \ref{thm:TDP} and Corollary \ref{cor:TDP_Xssss}.


\section{The tension determination problem}\label{sec:TDP}

Following the strategy employed in \cite{garcia2025immersed} in the context of the inextensible 2D Peskin problem, we will prove Theorem \ref{thm:TDP} via the following series of propositions. We begin in section \ref{subsec:decomp_Q} by using Theorem \ref{thm:decomp} to extract the main behavior of the operator $\mc{Q}_\epsilon$ given by \eqref{eq:Qeps}. In section \ref{subsec:reform_TDP}, we show that tension determination problem may be reformulated such that the left hand side operator acting on $\tau$ is of the form $(I+\mc{K})[\tau]$, where $\mc{K}$ is compact from $C^{1,\alpha}(\T)$ to $C^{1,\alpha}(\T)$. We then establish solvability of the tension determination problem in section \ref{subsec:qual_TDP} and obtain a coarse bound for the solution depending on the filament geometry. These results combine to yield brief proofs of Theorem \ref{thm:TDP} in section \ref{subsec:pf_TDP} and Corollary \ref{cor:TDP_Xssss} in section \ref{subsec:cor_TDP_Xssss}.

Throughout, we take $\abs{\X}_\star=:c_\Gamma>0$, and whenever we compare two nearby filaments with centerline $\X^{(a)}$, $\X^{(b)}$, we take $c_\Gamma=\min\{|\X^{(a)}|_\star,|\X^{(b)}|_\star\}$. 

\subsection{Decomposition of $\mc{Q}_\epsilon$}\label{subsec:decomp_Q}
Here we use Theorem \ref{thm:decomp} to show that the main behavior of the operator $\mc{Q}_\epsilon$ in \eqref{eq:Qeps} is given by $\overline{\mc{L}}_\epsilon^{\rm tang}\p_{ss}$, where $\overline{\mc{L}}_\epsilon^{\rm tang}$ is as in \eqref{eq:tang_and_nor}.
\begin{proposition}[Decomposition of $\mc{Q}_\epsilon$]\label{prop:Qdecomp}
Given $\Sigma_\epsilon$ as in \eqref{eq:SigmaEps}-\eqref{eq:rstar} with centerline $\X\in C^{3,\alpha}(\T)$, and given $\psi\in C^{1,\alpha}(\T)$, the operator $\mc{Q}_\epsilon$ given by \eqref{eq:Qeps} may be decomposed as 
\begin{equation}\label{eq:Qdecomp}
  \mc{Q}_\epsilon[\psi] = \big(\overline{\mc{L}}_\epsilon^{\rm tang}\p_{ss} +\p_s\mc{R}_{Q1} +\p_s\mc{R}_{Q2}
  +\mc{R}_{Q3}\big)[\psi]
\end{equation}
where the remainder terms $\mc{R}_{Q1}$, $\mc{R}_{Q2}$, and $\mc{R}_{Q3}$ satisfy
\begin{equation}
\begin{aligned}
  \norm{\mc{R}_{Q1}[\psi]}_{C^{1,\alpha}(\T)} &\le 
  c(\norm{\X}_{C^{2,\alpha^+}},c_\Gamma)\,\epsilon^{-\alpha^+}\abs{\log\epsilon}^2\norm{\psi}_{C^{1,\alpha}} \\
  \norm{\mc{R}_{Q2}[\psi]}_{C^{1,\gamma}(\T)} &\le 
  c(\epsilon,\|\X\|_{C^{3,\alpha}},c_\Gamma)\norm{\psi}_{C^{1,\alpha}}\\
\norm{\mc{R}_{Q3}[\psi]}_{C^{1,\alpha}(\T)} &\le 
 c(\epsilon,\norm{\X}_{C^{3,\alpha}},c_\Gamma)\norm{\psi}_{C^{1,\alpha}}\,.
\end{aligned}
\end{equation}
for any $\alpha^+>\alpha$ and $0<\alpha<\gamma<1$.

Furthermore, given two nearby filaments with centerlines $\X^{(a)}(s)$, $\X^{(b)}(s)$ in $C^{3,\alpha}(\T)$, the differences between the corresponding remainder terms satisfy 
\begin{equation}
\begin{aligned}
&\norm{(\mc{R}_{Q1}^{(a)}-\mc{R}_{Q1}^{(b)})[\psi]}_{C^{1,\alpha}} \\
&\quad\le 
  c(\|\X^{(a)}\|_{C^{2,\alpha^+}},\|\X^{(b)}\|_{C^{2,\alpha^+}},c_\Gamma)\, \epsilon^{-\alpha^+}\abs{\log\epsilon}^2\norm{\X^{(a)}-\X^{(b)}}_{C^{2,\alpha^+}}\|\psi\|_{C^{1,\alpha}}\\
&\norm{(\mc{R}_{Q2}^{(a)}-\mc{R}_{Q2}^{(b)})[\psi]}_{C^{1,\gamma}} \\
&\quad \le
  c(\epsilon,\|\X^{(a)}\|_{C^{3,\alpha}},\|\X^{(b)}\|_{C^{3,\alpha}},c_\Gamma)\norm{\X^{(a)}-\X^{(b)}}_{C^{2,\alpha^+}}\norm{\psi}_{C^{1,\alpha}}\\
&\norm{(\mc{R}_{Q3}^{(a)}-\mc{R}_{Q3}^{(b)})[\psi]}_{C^{1,\alpha}} \\
&\quad \le 
  c(\epsilon,\|\X^{(a)}\|_{C^{3,\alpha}},\|\X^{(b)}\|_{C^{3,\alpha}},c_\Gamma)\norm{\X^{(a)}-\X^{(b)}}_{C^{3,\alpha}}\norm{\psi}_{C^{1,\alpha}}\,.
\end{aligned}
\end{equation}
\end{proposition}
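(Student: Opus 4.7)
The plan is to reformulate $\mc{Q}_\epsilon[\psi]$ using a product-rule identity and then apply the decomposition of Theorem~\ref{thm:decomp} directly to $\mc{L}_\epsilon[(\psi\X_s)_s]$. Setting $\bm{f}:=(\psi\X_s)_s = \psi_s\X_s+\psi\X_{ss}$, I first write
\begin{equation*}
\mc{Q}_\epsilon[\psi] = \big(\mc{L}_\epsilon[\bm{f}]\big)_s\cdot\X_s = \p_s\big(\mc{L}_\epsilon[\bm{f}]\cdot\X_s\big) - \mc{L}_\epsilon[\bm{f}]\cdot\X_{ss}\,,
\end{equation*}
which isolates the last summand as $\mc{R}_{Q3}[\psi]:=-\mc{L}_\epsilon[\bm{f}]\cdot\X_{ss}$. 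Its $C^{1,\alpha}$ bound is immediate from the Hölder estimate in Proposition~\ref{prop:SB_BVP} applied to $\mc{L}_\epsilon[\bm{f}]$, combined with $\norm{\bm{f}}_{C^{0,\alpha}}\le c(\norm{\X}_{C^{2,\alpha}})\norm{\psi}_{C^{1,\alpha}}$ and the fact that $C^{1,\alpha}(\T)$ is an algebra.

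Next I substitute the Theorem~\ref{thm:decomp} decomposition into the remaining term, defining
\begin{equation*}
\mc{R}_{Q1}[\psi] := \mc{R}_{\rm n,\epsilon}\big[\Phi\overline{\mc{L}}_\epsilon[\Phi^{-1}\bm{f}_0^\Phi]\big]\cdot\X_s\,, \qquad \mc{R}_{Q2}[\psi] := \mc{R}_{\rm n,+}[\bm{f}]\cdot\X_s\,.
\end{equation*}
The $C^{1,\gamma}$ bound for $\mc{R}_{Q2}$ follows from the $\mc{R}_{\rm n,+}$ estimate in Theorem~\ref{thm:decomp} together with $\norm{\bm{f}}_{C^{0,\alpha}}\le c(\norm{\X}_{C^{2,\alpha}})\norm{\psi}_{C^{1,\alpha}}$. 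For $\mc{R}_{Q1}$ I invoke the $\mc{R}_{\rm n,\epsilon}$ bound to gain a factor $\epsilon^{1-\alpha^+}\abs{\log\epsilon}$ and then use Lemma~\ref{lem:straight_Leps} to obtain $\norm{\Phi\overline{\mc{L}}_\epsilon[\Phi^{-1}\bm{f}_0^\Phi]}_{C^{1,\alpha}} \le c(\norm{\X}_{C^{2,\alpha}})\abs{\log\epsilon}\epsilon^{-1}\norm{\bm{f}}_{C^{0,\alpha}}$; multiplying gives exactly the claimed $\epsilon^{-\alpha^+}\abs{\log\epsilon}^2$ rate.

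The leading term is identified using the arclength constraint together with the diagonal structure of $\overline{\mc{L}}_\epsilon$. By the definition \eqref{eq:mapPhi_def} of $\Phi$ we have $\Phi\bm{w}\cdot\X_s = \bm{w}\cdot\be_z$, and Proposition~\ref{prop:Leps_spectrum} shows that $\overline{\mc{L}}_\epsilon$ is diagonal in the frame $\{\be_z,\be_x,\be_y\}$, so
\begin{equation*}
\Phi\overline{\mc{L}}_\epsilon[\Phi^{-1}\bm{f}_0^\Phi]\cdot\X_s = \overline{\mc{L}}_\epsilon^{\rm tang}\Big[\bm{f}\cdot\X_s - \int_\T\bm{f}\cdot\X_s\,ds\Big]\,.
\end{equation*}
The arclength parameterization $\abs{\X_s}^2=1$ gives $\X_s\cdot\X_{ss}=0$, hence $\bm{f}\cdot\X_s = \psi_s$, which is already mean-zero on $\T$. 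Since $\overline{\mc{L}}_\epsilon^{\rm tang}$ is a Fourier multiplier on $\T$ it commutes with $\p_s$, and differentiating yields exactly $\overline{\mc{L}}_\epsilon^{\rm tang}\p_{ss}[\psi]$, as required.

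The bulk of the work lies in the Lipschitz estimates. For each $j$ I would telescope $\mc{R}_{Qj}^{(a)} - \mc{R}_{Qj}^{(b)}$ by adding and subtracting intermediate terms that isolate, one at a time, the $\X$-dependence of the Theorem~\ref{thm:decomp} remainders ($\mc{R}_{\rm n,\epsilon}$ or $\mc{R}_{\rm n,+}$), of $\Phi$ and $\Phi^{-1}$ (using the Appendix~\ref{sec:Phi_comm} estimates), and of the inputs $\X_s, \X_{ss}$ appearing in $\bm{f}$. The two-filament bounds of Theorem~\ref{thm:decomp} and Proposition~\ref{prop:SB_BVP} then supply every needed piece. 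The main technical obstacle is confining the difference $\X^{(a)}-\X^{(b)}$ to the weaker $C^{2,\alpha^+}$ norm in the $\mc{R}_{Q1}$ estimate rather than $C^{3,\alpha}$: this forces the more delicate Lipschitz factor always to sit on $\mc{R}_{\rm n,\epsilon}$ or on a frame-dependent quantity, and never on the $C^{3,\alpha}$-level bound for $\mc{L}_\epsilon$ that is used in $\mc{R}_{Q3}$.
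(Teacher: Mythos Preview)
Your approach is correct and in fact slightly cleaner than the paper's. Both routes produce identical $\mc{R}_{Q1}$ and $\mc{R}_{Q2}$, but differ in how the leading term is extracted and consequently in what lands in $\mc{R}_{Q3}$. The paper first applies $\p_s$ to the Theorem~\ref{thm:decomp} decomposition of $\mc{L}_\epsilon[(\psi\X_s)_s]$ and only then dots with $\X_s$; because $\p_s$ does not commute with $\Phi$, this generates commutator terms $[\p_s,\Phi]\overline{\mc{L}}_\epsilon[\cdots]$ and $\Phi\overline{\mc{L}}_\epsilon[[\p_s,\Phi^{-1}](\cdots)]$, and the main part becomes $\overline{\mc{L}}_\epsilon^{\rm tang}[\be_{\rm t}\cdot(\psi\X_s)_{ss}]=\overline{\mc{L}}_\epsilon^{\rm tang}[\psi_{ss}-|\X_{ss}|^2\psi]$, forcing the extra piece $-\overline{\mc{L}}_\epsilon^{\rm tang}[|\X_{ss}|^2\psi]$ into $\mc{R}_{Q3}$ along with the commutators. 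Your product-rule maneuver places $\p_s$ outside and the dot product with $\X_s$ inside, so the main part collapses to the frame-independent scalar $\overline{\mc{L}}_\epsilon^{\rm tang}[\psi_s]$ with no commutators and no $|\X_{ss}|^2\psi$ correction; your $\mc{R}_{Q3}=-\mc{L}_\epsilon[\bm{f}]\cdot\X_{ss}$ is accordingly simpler. What the paper's arrangement buys is that every term in $\mc{R}_{Q3}$ is already expressed through pieces ($\mc{R}_{\rm n,\epsilon}$, $\mc{R}_{\rm n,+}$, $\overline{\mc{L}}_\epsilon$, commutators) with ready-made Lipschitz bounds, whereas your $\mc{R}_{Q3}$ Lipschitz estimate requires a Lipschitz bound for the full operator $\mc{L}_\epsilon$.

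One small correction on that point: Proposition~\ref{prop:SB_BVP} does \emph{not} supply two-filament bounds, so your appeal to it for the $\mc{R}_{Q3}$ Lipschitz estimate does not work as stated. You instead need to assemble the Lipschitz bound for $\mc{L}_\epsilon^{(a)}[\bm{f}^{(a)}]-\mc{L}_\epsilon^{(b)}[\bm{f}^{(b)}]$ from the Theorem~\ref{thm:decomp} decomposition itself (main part via the $\Phi$ Lipschitz estimates \eqref{eq:Phi_lip_est} in Appendix~\ref{sec:Phi_comm}, plus the remainder Lipschitz bounds \eqref{eq:thm_NtD_ests_lip}). This is routine and amounts to the same work the paper does term-by-term; the resulting factor $\norm{\X^{(a)}-\X^{(b)}}_{C^{3,\alpha}}$ (from the $\X_{ss}^{(a)}-\X_{ss}^{(b)}$ piece in $C^{1,\alpha}$) matches the stated $\mc{R}_{Q3}$ Lipschitz bound.
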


\begin{proof}
We being by using Theorem \ref{thm:decomp} to rewrite $\p_s\mc{L}_\epsilon$. Given $\bm{h}(s)\in C^{0,\alpha}(\T)$, we may decompose $\p_s\mc{L}_\epsilon[\bm{h}]$ as 
\begin{equation}
\begin{aligned}
  \p_s\mc{L}_\epsilon[\bm{h}] &= \p_s\big(\Phi\overline{\mc{L}}_\epsilon[\Phi^{-1}\bm{h}_0^\Phi]\big) +\p_s\mc{R}_{\rm n,\epsilon}\big[\Phi\overline{\mc{L}}_\epsilon[\Phi^{-1}\bm{h}_0^\Phi]\big] +\p_s\mc{R}_{\rm n,+}[\bm{h}]\\
  &= \Phi\overline{\mc{L}}_\epsilon[\Phi^{-1}\p_s\bm{h}] +\p_s\mc{R}_{\rm n,\epsilon}\big[\Phi\overline{\mc{L}}_\epsilon[\Phi^{-1}\bm{h}_0^\Phi]\big] +\p_s\mc{R}_{\rm n,+}[\bm{h}]\\
  &\qquad+ [\p_s,\Phi]\overline{\mc{L}}_\epsilon[\Phi^{-1}\bm{h}_0^\Phi] + \Phi\overline{\mc{L}}_\epsilon\big[[\p_s,\Phi^{-1}]\bm{h}\big]\,,
\end{aligned}
\end{equation}
where we recall the definition \eqref{eq:subtract_mean} of the notation $(\cdot)_0^\Phi$. Recalling the definition \eqref{eq:mapPhi_def} of the map $\Phi$, and taking $\bm{h}=(\psi\X_s)_s$, we may then write $\mc{Q}_\epsilon$ as
\begin{equation}
\begin{aligned}
  \mc{Q}_\epsilon[\psi] 
  &= \be_z\cdot\overline{\mc{L}}_\epsilon[\Phi^{-1}(\psi\X_s)_{ss}] 
  + \p_s\big(\be_{\rm t}\cdot\mc{R}_{\rm n,\epsilon}\big[\Phi\overline{\mc{L}}_\epsilon\big[\big((\psi\X_s)_s \big)_0^\Phi\big]\big]\big) 
  +\p_s\big(\be_{\rm t}\cdot\mc{R}_{\rm n,+}[(\psi\X_s)_s]\big)\\
  &\qquad - \X_{ss}\cdot\mc{R}_{\rm n,\epsilon}\big[\Phi\overline{\mc{L}}_\epsilon\big[\big((\psi\X_s)_s \big)_0^\Phi\big]\big] 
  -\X_{ss}\cdot\mc{R}_{\rm n,+}[(\psi\X_s)_s]\\
  &\qquad+ \be_{\rm t}\cdot\left([\p_s,\Phi]\overline{\mc{L}}_\epsilon\big[\big((\psi\X_s)_s \big)_0^\Phi\big] \right)
  + \be_z\cdot\overline{\mc{L}}_\epsilon\big[[\p_s,\Phi^{-1}](\psi\X_s)_s\big]\,.
\end{aligned}
\end{equation}
Using that $\abs{\X_s}^2=1$, we may write $\be_{\rm t}\cdot(\psi\X_s)_{ss}=\psi_{ss}- \abs{\X_{ss}}^2\psi$ to obtain
\begin{equation}
  \be_z\cdot\overline{\mc{L}}_\epsilon[\Phi^{-1}(\psi\X_s)_{ss}] 
  = \overline{\mc{L}}_\epsilon^{\rm tang}[\be_{\rm t}\cdot(\psi\X_s)_{ss}]
  = \overline{\mc{L}}_\epsilon^{\rm tang}[\psi_{ss}- \abs{\X_{ss}}^2\psi]\,,
\end{equation}
where $\overline{\mc{L}}_\epsilon^{\rm tang}$ is as in \eqref{eq:tang_and_nor}. We may thus decompose $\mc{Q}_\epsilon$ as 
\begin{equation}
\begin{aligned}
  \mc{Q}_\epsilon[\psi] 
  &= \overline{\mc{L}}_\epsilon^{\rm tang}\p_{ss}[\psi] +\p_s\mc{R}_{Q1}[\psi]
  +\p_s\mc{R}_{Q2}[\psi] +\mc{R}_{Q3}[\psi]\,,\\
  \mc{R}_{Q1}[\psi]&= \be_{\rm t}\cdot\mc{R}_{\rm n,\epsilon}\big[\Phi\overline{\mc{L}}_\epsilon\big[\big((\psi\X_s)_s \big)_0^\Phi\big]\big] \\
  \mc{R}_{Q2}[\psi] &=\be_{\rm t}\cdot\mc{R}_{\rm n,+}[(\psi\X_s)_s]\\
  \mc{R}_{Q3}[\psi]&= - \X_{ss}\cdot\mc{R}_{\rm n,\epsilon}\big[\Phi\overline{\mc{L}}_\epsilon\big[\big((\psi\X_s)_s \big)_0^\Phi\big]\big] 
  -\X_{ss}\cdot\mc{R}_{\rm n,+}[(\psi\X_s)_s]\\
  &\quad+ \be_{\rm t}\cdot\left([\p_s,\Phi]\overline{\mc{L}}_\epsilon\big[\big((\psi\X_s)_s \big)_0^\Phi\big] \right)
  + \be_z\cdot\overline{\mc{L}}_\epsilon\big[[\p_s,\Phi^{-1}](\psi\X_s)_s\big]
  - \overline{\mc{L}}_\epsilon^{\rm tang}[\abs{\X_{ss}}^2\psi]\,.
\end{aligned}
\end{equation}

Using the mapping properties \eqref{eq:thm_NtD_ests}, \eqref{eq:holder_NtD} of $\mc{R}_{\rm n,\epsilon}$, $\mc{R}_{\rm n,+}$, and $\overline{\mc{L}}_\epsilon$, we may bound
\begin{equation}
\begin{aligned}
  \norm{\mc{R}_{Q1}[\psi]}_{C^{1,\alpha}(\T)} &\le c(\norm{\X}_{C^{2,\alpha^+}},c_\Gamma)\epsilon^{1-\alpha^+}\abs{\log\epsilon}\|\overline{\mc{L}}_\epsilon\big[\big((\psi\X_s)_s \big)_0^\Phi\big]\big]\|_{C^{1,\alpha}} \\
  &\le c(\norm{\X}_{C^{2,\alpha^+}},c_\Gamma)\epsilon^{1-\alpha^+}\abs{\log\epsilon}^2\big( \norm{(\psi\X_s)_s}_{L^\infty} + \epsilon^{-1}\abs{(\psi\X_s)_s}_{\dot C^{0,\alpha}}\big)\\
  &\le c(\norm{\X}_{C^{2,\alpha^+}},c_\Gamma)\epsilon^{-\alpha^+}\abs{\log\epsilon}^2\norm{\psi}_{C^{1,\alpha}} \,,\\
  \norm{\mc{R}_{Q2}[\psi]}_{C^{1,\gamma}(\T)} &\le 
  c(\epsilon,\|\X\|_{C^{3,\alpha}},c_\Gamma)\norm{(\psi\X_s)_s}_{C^{0,\alpha}} 
  \le c(\epsilon,\|\X\|_{C^{3,\alpha}},c_\Gamma)\norm{\psi}_{C^{1,\alpha}}\,.
\end{aligned}
\end{equation}
In addition, using the commutator estimates \eqref{eq:Phi_commutator_est} for the map $\Phi$, we have 
\begin{equation}
\begin{aligned}
  \norm{\mc{R}_{Q3}[\psi]}_{C^{1,\alpha}(\T)} &\le 
  c(\norm{\X}_{C^{3,\alpha}},c_\Gamma) \big(\epsilon^{1-\alpha^+}\abs{\log\epsilon}+1\big)\norm{\overline{\mc{L}}_\epsilon\big[\big((\psi\X_s)_s \big)_0^\Phi\big]}_{C^{1,\alpha}}\\
  &\quad 
  + c(\epsilon,\norm{\X}_{C^{3,\alpha}},c_\Gamma)\norm{(\psi\X_s)_s}_{C^{0,\alpha}} \\
  &\quad 
  + \norm{\overline{\mc{L}}_\epsilon\big[[\p_s,\Phi^{-1}](\psi\X_s)_s\big]}_{C^{1,\alpha}}
  +\norm{\overline{\mc{L}}_\epsilon^{\rm tang}[\abs{\X_{ss}}^2\psi]}_{C^{1,\alpha}}\\
  &\le c(\epsilon,\norm{\X}_{C^{3,\alpha}},c_\Gamma) \norm{(\psi\X_s)_s}_{C^{0,\alpha}} +c(\epsilon)\norm{\abs{\X_{ss}}^2\psi}_{C^{0,\alpha}}\\
  &\le c(\epsilon,\norm{\X}_{C^{3,\alpha}},c_\Gamma)\norm{\psi}_{C^{1,\alpha}}\,.
\end{aligned}
\end{equation}

Furthermore, using the Lipschitz bounds \eqref{eq:thm_NtD_ests_lip} and \eqref{eq:Phi_lip_est}, for any two nearby filaments with centerlines $\X^{(a)}$ and $\X^{(b)}$, we have that the corresponding remainder terms satisfy 
\begin{equation}
\begin{aligned}
  &\norm{(\mc{R}_{Q1}^{(a)}-\mc{R}_{Q1}^{(b)})[\psi]}_{C^{1,\alpha}} \\
  &\quad \le \epsilon^{1-\alpha^+}\abs{\log\epsilon}\bigg(c(\|\X^{(b)}\|_{C^{2,\alpha^+}},c_\Gamma)\norm{\overline{\mc{L}}_\epsilon\big[\big((\psi\X_s^{(a)})_s \big)_0^{\Phi^{(a)}}- \big((\psi\X_s^{(b)})_s \big)_0^{\Phi^{(b)}}\big]}_{C^{1,\alpha}}\\
  &\qquad + c(\|\X^{(a)}\|_{C^{2,\alpha^+}},\|\X^{(b)}\|_{C^{2,\alpha^+}},c_\Gamma)\norm{\X^{(a)}-\X^{(b)}}_{C^{2,\alpha^+}}\norm{\overline{\mc{L}}_\epsilon\big[\big((\psi\X_s^{(a)})_s \big)_0^{\Phi^{(a)}}\big]\big]}_{C^{1,\alpha}}\bigg) \\
  &\quad \le c(\|\X^{(a)}\|_{C^{2,\alpha^+}},\|\X^{(b)}\|_{C^{2,\alpha^+}},c_\Gamma)\, \epsilon^{-\alpha^+}\abs{\log\epsilon}^2\norm{\X^{(a)}-\X^{(b)}}_{C^{2,\alpha^+}}\|\psi\|_{C^{1,\alpha}}\\
  &\norm{(\mc{R}_{Q2}^{(a)}-\mc{R}_{Q2}^{(b)})[\psi]}_{C^{1,\gamma}} \\
  &\quad \le 
  c(\epsilon,\|\X^{(a)}\|_{C^{3,\alpha}},\|\X^{(b)}\|_{C^{3,\alpha}},c_\Gamma)\norm{\X^{(a)}-\X^{(b)}}_{C^{2,\alpha^+}}\norm{(\psi\X_s^{(a)})_s}_{C^{0,\alpha}} \\
  &\qquad + c(\epsilon,\|\X^{(b)}\|_{C^{3,\alpha}},c_\Gamma)\norm{(\psi\X_s^{(a)})_s- (\psi\X_s^{(b)})_s}_{C^{0,\alpha}}\\
  &\quad\le
  c(\epsilon,\|\X^{(a)}\|_{C^{3,\alpha}},\|\X^{(b)}\|_{C^{3,\alpha}},c_\Gamma)\norm{\X^{(a)}-\X^{(b)}}_{C^{2,\alpha^+}}\norm{\psi}_{C^{1,\alpha}}\,.
\end{aligned}
\end{equation}
Finally, using \eqref{eq:Phi_comm_lip_est}, we may also bound $\mc{R}_{Q3}^{(a)}-\mc{R}_{Q3}^{(b)}$ as 
\begin{equation}
\begin{aligned}
&\norm{(\mc{R}_{Q3}^{(a)}-\mc{R}_{Q3}^{(b)})[\psi]}_{C^{1,\alpha}}\\
&\le (\epsilon^{1-\alpha^+}\abs{\log\epsilon}+1)\bigg(
  c(\|\X^{(b)}\|_{C^{2,\alpha^+}},c_\Gamma)\norm{\overline{\mc{L}}_\epsilon\big[\big((\psi\X_s^{(a)})_s \big)_0^{\Phi^{(a)}} - \big((\psi\X_s^{(b)})_s \big)_0^{\Phi^{(b)}}\big]}_{C^{1,\alpha}} \\
&\quad + 
  c(\|\X^{(a)}\|_{C^{2,\alpha^+}},\|\X^{(b)}\|_{C^{2,\alpha^+}},c_\Gamma)\norm{\X^{(a)}-\X^{(b)}}_{C^{2,\alpha^+}}\norm{\overline{\mc{L}}_\epsilon\big[\big((\psi\X_s^{(a)})_s \big)_0^{\Phi^{(a)}}\big]}_{C^{1,\alpha}}\bigg)\\
&\quad  + 
  c(\epsilon,\|\X^{(a)}\|_{C^{3,\alpha}},\|\X^{(b)}\|_{C^{3,\alpha}},c_\Gamma)\norm{\X^{(a)}-\X^{(b)}}_{C^{3,\alpha}}\norm{(\psi\X_s^{(a)})_s}_{C^{0,\alpha}} \\
&\quad +
  c(\epsilon,\|\X^{(b)}\|_{C^{3,\alpha}},c_\Gamma)\norm{(\psi\X_s^{(a)})_s-(\psi\X_s^{(b)})_s}_{C^{0,\alpha}} 
 + c(\epsilon)\norm{(|\X_{ss}^{(a)}|^2-|\X_{ss}^{(b)}|^2)\psi}_{C^{0,\alpha}}\\
&\le 
  c(\epsilon,\|\X^{(a)}\|_{C^{3,\alpha}},\|\X^{(b)}\|_{C^{3,\alpha}},c_\Gamma)\norm{\X^{(a)}-\X^{(b)}}_{C^{3,\alpha}}\norm{\psi}_{C^{1,\alpha}}\,.
\end{aligned}
\end{equation}
In total, we obtain Proposition \ref{prop:Qdecomp}. 
\end{proof}

\subsection{Reformulation of tension determination problem}\label{subsec:reform_TDP}
Using the decomposition of Proposition \ref{prop:Qdecomp}, we show that the tension determination problem may be reformulated as follows.
\begin{proposition}\label{prop:TDP_reform}
The tension determination problem \eqref{eq:TDP} may be reformulated as 
  \begin{equation}\label{eq:TDP_reform}
    (I +\mc{K})[\tau] = (\mc{G}_0+\mc{G}_\epsilon+\mc{G}_+')[\bm{g}]
  \end{equation}
where, for any $0<\alpha<\gamma<1$, the operator $\mc{K}$ satisfies
\begin{equation}
  \norm{\mc{K}[\tau]}_{C^{1,\gamma}(\T)} \le c(\epsilon,\norm{\X}_{C^{3,\alpha}},c_\Gamma)\norm{\tau}_{C^{1,\alpha}(\T)}\,.
\end{equation}
Given two nearby filaments with centerlines $\X^{(a)}$, $\X^{(b)}$ in $C^{3,\alpha}(\T)$, the difference between the corresponding operators $\mc{K}^{(a)}$ and $\mc{K}^{(b)}$ satisfies
\begin{equation}
  \norm{(\mc{K}^{(a)}-\mc{K}^{(b)})[\tau]}_{C^{1,\gamma}(\T)} \le c(\epsilon,\|\X^{(a)}\|_{C^{3,\alpha}},\|\X^{(b)}\|_{C^{3,\alpha}},c_\Gamma)\norm{\X^{(a)}-\X^{(b)}}_{C^{3,\alpha}}\norm{\tau}_{C^{1,\alpha}(\T)}\,.
\end{equation}

Meanwhile, the main term on the right hand side is given explicitly by
\begin{equation}
  \mc{G}_0[\bm{g}] = (I-\overline{\mc{L}}_\epsilon^{\rm tang}\p_{ss})^{-1}\p_s\overline{\mc{L}}_\epsilon^{\rm tang}\bigg[\be_{\rm t}\cdot\bm{g}-\int_\T\be_{\rm t}\cdot\bm{g}\,ds \bigg]\,,
\end{equation}
while the remainder terms satisfy 
\begin{equation}
\begin{aligned}
  \norm{\mc{G}_\epsilon[\bm{g}]}_{C^{1,\alpha}}&\le c(\|\X\|_{C^{2,\alpha^+}},c_\Gamma)\,\epsilon^{1-\alpha^+}\abs{\log\epsilon}^3\norm{\bm{g}}_{C^{0,\alpha}}\\
  \norm{\mc{G}_+'[\bm{g}]}_{C^{1,\gamma}}&\le c(\epsilon,\norm{\X}_{C^{3,\alpha}},c_\Gamma)\norm{\bm{g}}_{C^{0,\alpha}}
\end{aligned}
\end{equation}
along with the Lipschitz bounds
\begin{equation}
\begin{aligned}
  &\norm{(\mc{G}_\epsilon^{(a)}-\mc{G}_\epsilon^{(b)})[\bm{g}]}_{C^{1,\alpha}}\\
  &\quad\qquad\le c(\|\X^{(a)}\|_{C^{2,\alpha^+}},\|\X^{(b)}\|_{C^{2,\alpha^+}},c_\Gamma)\,\epsilon^{1-\alpha^+}\abs{\log\epsilon}^3\norm{\X^{(a)}-\X^{(b)}}_{C^{2,\alpha^+}}\norm{\bm{g}}_{C^{0,\alpha}}\\
  &\norm{(\mc{G}_+^{'(a)}-\mc{G}_+^{'(b)})[\bm{g}]}_{C^{1,\gamma}}\le c(\epsilon,\|\X^{(a)}\|_{C^{3,\alpha}},\|\X^{(b)}\|_{C^{3,\alpha}},c_\Gamma)\norm{\X^{(a)}-\X^{(b)}}_{C^{2,\gamma^+}}\norm{\bm{g}}_{C^{0,\alpha}}
\end{aligned}
\end{equation}
for any $\alpha^+>\alpha$ and $\gamma^+>\gamma$.
\end{proposition}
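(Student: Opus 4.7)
The idea is to treat $I-\overline{\mc{L}}_\epsilon^{\rm tang}\p_{ss}$ (which, unlike $\overline{\mc{L}}_\epsilon^{\rm tang}\p_{ss}$ alone, is invertible and smoothing by Lemma~\ref{lem:TDP_auxlem}) as the principal part of the tension determination problem and to invert it, after inserting Proposition~\ref{prop:Qdecomp} on the left and Theorem~\ref{thm:decomp} on the right. I will first substitute Proposition~\ref{prop:Qdecomp} into \eqref{eq:TDP}, negate and add $\tau$ to both sides to obtain
\[
(I-\overline{\mc{L}}_\epsilon^{\rm tang}\p_{ss})[\tau] \;=\; \tau + \mc{R}_Q[\tau] + \bigl(\mc{L}_\epsilon[\bm{g}]\bigr)_s\cdot\X_s, \qquad \mc{R}_Q:=\p_s\mc{R}_{Q1}+\p_s\mc{R}_{Q2}+\mc{R}_{Q3},
\]
and then apply $(I-\overline{\mc{L}}_\epsilon^{\rm tang}\p_{ss})^{-1}$ to both sides, isolating the $\tau$-dependent pieces on the left. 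This yields the target form with
\[
\mc{K}[\tau] := -(I-\overline{\mc{L}}_\epsilon^{\rm tang}\p_{ss})^{-1}[\tau] - (I-\overline{\mc{L}}_\epsilon^{\rm tang}\p_{ss})^{-1}\mc{R}_Q[\tau].
\]
The $C^{1,\alpha}\to C^{1,\gamma}$ bound for $\mc{K}$ will come by writing this second summand as $(\mc{M}_{2,\epsilon}+\mc{M}_{2,+})[\mc{R}_{Q1}+\mc{R}_{Q2}] + (\mc{M}_{1,\epsilon}+\mc{M}_{1,+})[\mc{R}_{Q3}[\tau]]$ and chaining Proposition~\ref{prop:Qdecomp} with Lemma~\ref{lem:TDP_auxlem}; each piece gains at least one fractional derivative.

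Next I will process the right-hand side to isolate $\mc{G}_0$. Writing $(\mc{L}_\epsilon[\bm{g}])_s\cdot\X_s = \p_s(\be_{\rm t}\cdot\mc{L}_\epsilon[\bm{g}]) - \X_{ss}\cdot\mc{L}_\epsilon[\bm{g}]$ and inserting the decomposition \eqref{eq:thm_NtD_decomp}, the algebraic identity
\[
\be_{\rm t}\cdot\Phi\overline{\mc{L}}_\epsilon[\Phi^{-1}\bm{g}_0^\Phi] \;=\; \overline{\mc{L}}_\epsilon^{\rm tang}\Bigl[\be_{\rm t}\cdot\bm{g} - \int_\T \be_{\rm t}\cdot\bm{g}\,ds\Bigr]
\]
(immediate from \eqref{eq:mapPhi_def}, \eqref{eq:subtract_mean}, and the diagonal structure \eqref{eq:eval_prob}) produces the advertised $\mc{G}_0$ after $(I-\overline{\mc{L}}_\epsilon^{\rm tang}\p_{ss})^{-1}\p_s$ hits it. The remaining contributions $(I-\overline{\mc{L}}_\epsilon^{\rm tang}\p_{ss})^{-1}\p_s(\be_{\rm t}\cdot\mc{R}_{\rm n,\epsilon}[\cdots])$, $(I-\overline{\mc{L}}_\epsilon^{\rm tang}\p_{ss})^{-1}\p_s(\be_{\rm t}\cdot\mc{R}_{\rm n,+}[\bm{g}])$, and $(I-\overline{\mc{L}}_\epsilon^{\rm tang}\p_{ss})^{-1}[\X_{ss}\cdot\mc{L}_\epsilon[\bm{g}]]$ will be bucketed into $\mc{G}_\epsilon$ and $\mc{G}_+'$ according to whether they carry an explicit small $\epsilon$-factor (via $\mc{R}_{\rm n,\epsilon}$, routed through the small-but-less-smoothing $\mc{M}_{j,\epsilon}$ branch) or merely a $c(\epsilon)$ constant with extra regularity (via $\mc{R}_{\rm n,+}$ or the fact that $\X_{ss}\cdot\Phi\overline{\mc{L}}_\epsilon\Phi^{-1}\bm{g}_0^\Phi$ already carries one derivative of smoothing from $\overline{\mc{L}}_\epsilon$, routed through $\mc{M}_{j,+}$).

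The quantitative bounds then follow by chaining the $\epsilon$-powers: an $\mc{R}_{\rm n,\epsilon}$ factor contributes $\epsilon^{1-\alpha^+}|\log\epsilon|$ from \eqref{eq:thm_NtD_ests}, an interior $\overline{\mc{L}}_\epsilon$ acting on $C^{0,\alpha}$ data contributes $|\log\epsilon|$ through \eqref{eq:holder_NtD} (its $\epsilon^{-1}$ on the H\"older seminorm being absorbed by the $\epsilon$ of $\mc{M}_{2,\epsilon}$), and a further $|\log\epsilon|$ arises at the outermost level, producing $\epsilon^{1-\alpha^+}|\log\epsilon|^3$ for $\mc{G}_\epsilon$; meanwhile $\mc{G}_+'$ collects the $c(\epsilon)$-terms, which trivially gain a derivative. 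The Lipschitz statements follow term by term from the Lipschitz estimates of Proposition~\ref{prop:Qdecomp} and Theorem~\ref{thm:decomp}, using that $\mc{M}_{j,\cdot}$, $\overline{\mc{L}}_\epsilon$, and $\overline{\mc{L}}_\epsilon^{\rm tang}$ are independent of $\X$, so differences $\X^{(a)}-\X^{(b)}$ enter only through the $\X$-dependent factors $\mc{R}_{Qj}$, $\mc{R}_{\rm n,\cdot}$, $\be_{\rm t}$, $\X_{ss}$, and $\Phi$.

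\textbf{Main obstacle.} The delicate point is routing each contribution through the correct branch of Lemma~\ref{lem:TDP_auxlem}: every term carrying a bad $\epsilon$-power (from $\mc{R}_{\rm n,\epsilon}$ or from the $\epsilon^{-1}$ in \eqref{eq:holder_NtD}) must be passed through the small $\mc{M}_{j,\epsilon}$ to preserve the explicit $\epsilon^{1-\alpha^+}$ decay of $\mc{G}_\epsilon$, while every already-smooth-in-$\epsilon$ term must go through $\mc{M}_{j,+}$ to preserve regularity for $\mc{G}_+'$. Choosing the wrong branch either destroys the explicit $\epsilon$-decay that makes $\mc{G}_\epsilon$ useful in Theorem~\ref{thm:main} or costs a derivative, collapsing $\mc{G}_+'$ back into $\mc{G}_\epsilon$. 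The secondary subtlety is tracking logarithms carefully when the $\overline{\mc{L}}_\epsilon$-seminorm $\epsilon^{-1}$-loss has to be absorbed by $\mc{M}_{2,\epsilon}$.
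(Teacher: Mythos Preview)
Your overall architecture is right, and your treatment of the right-hand side (extracting $\mc{G}_0$ via the identity $\be_{\rm t}\cdot\Phi\overline{\mc{L}}_\epsilon[\Phi^{-1}\bm{g}_0^\Phi]=\overline{\mc{L}}_\epsilon^{\rm tang}[\be_{\rm t}\cdot\bm{g}-\int\be_{\rm t}\cdot\bm{g}]$ and bucketing the rest according to $\epsilon$-smallness versus regularity gain) matches the paper. But there is a genuine gap on the left-hand side.

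Your assertion that ``each piece gains at least one fractional derivative'' is false for the term $\mc{M}_{2,\epsilon}[\mc{R}_{Q1}[\tau]]$. By Proposition~\ref{prop:Qdecomp}, $\mc{R}_{Q1}$ only maps $C^{1,\alpha}\to C^{1,\alpha}$ (with constant $\epsilon^{-\alpha^+}|\log\epsilon|^2$), and by Lemma~\ref{lem:TDP_auxlem}, $\mc{M}_{2,\epsilon}$ also only maps $C^{\ell,\alpha}\to C^{\ell,\alpha}$ (with constant $c\,\epsilon$). The composition is small in $\epsilon$ but gains no H\"older regularity: you land in $C^{1,\alpha}$, not $C^{1,\gamma}$. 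Every other piece in your list does gain (via $\mc{M}_{2,+}$, $\mc{M}_{1,\epsilon}$, $\mc{M}_{1,+}$, or via $\mc{R}_{Q2}$ itself), but this one does not. Consequently your proposed $\mc{K}$ does not satisfy $\|\mc{K}[\tau]\|_{C^{1,\gamma}}\le c(\epsilon,\dots)\|\tau\|_{C^{1,\alpha}}$, and the compactness needed downstream in Proposition~\ref{prop:tdp_qual} would fail.

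The paper handles exactly this piece with an additional step you are missing: a Neumann series. It first writes the equation as $(I+\mc{K}_\epsilon+\mc{K}_+)[\tau]=\text{RHS}$, where $\mc{K}_\epsilon$ is precisely the offending $\mc{M}_{2,\epsilon}\circ\mc{R}_{Q1}$ part (small in $\epsilon$, no smoothing) and $\mc{K}_+$ collects everything that genuinely maps $C^{1,\alpha}\to C^{1,\gamma}$. Since $\|\mc{K}_\epsilon\|_{C^{1,\alpha}\to C^{1,\alpha}}\le c\,\epsilon^{1-\alpha^+}|\log\epsilon|^2$, for $\epsilon$ small one inverts $(I+\mc{K}_\epsilon)$ by Neumann series to get $(I+\mc{K}_\epsilon)^{-1}=I+\wt{\mc{K}}_\epsilon$ with the same smallness. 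Applying this to both sides yields $(I+\mc{K})[\tau]=(I+\wt{\mc{K}}_\epsilon)\,\text{RHS}$ with $\mc{K}:=(I+\wt{\mc{K}}_\epsilon)\mc{K}_+$; now $\mc{K}$ inherits the $C^{1,\alpha}\to C^{1,\gamma}$ smoothing from $\mc{K}_+$, and the action of $(I+\wt{\mc{K}}_\epsilon)$ on the right-hand side is what produces the extra $|\log\epsilon|$ in $\mc{G}_\epsilon$ and the specific form of $\mc{G}_+'$. Without this Neumann step your $\mc{K}$ is the wrong operator.
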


\begin{proof}
Using the decomposition of Proposition \ref{prop:Qdecomp}, we first rewrite \eqref{eq:TDP} as
\begin{equation}
  \big(-I+ \overline{\mc{L}}_\epsilon^{\rm tang}\p_{ss} +\p_s\mc{R}_{Q1} +\p_s\mc{R}_{Q2} +\mc{R}_{Q3}+I\big)[\tau] 
  = -\big(\mc{L}_\epsilon[\bm{g}]\cdot\X_s\big)_s + \mc{L}_\epsilon[\bm{g}]\cdot\X_{ss}\,,
\end{equation}
where $I$ denotes the identity operator and we have rewritten the right hand side as well. Applying $-(I-\overline{\mc{L}}_\epsilon^{\rm tang}\p_{ss})^{-1}$ to both sides, the solution to the tension determination problem must satisfy
\begin{equation}\label{eq:rewrite1} 
\begin{aligned}
  (I+ \mc{K}_1 + \mc{K}_2)[\tau] &= \mc{G}_1[\bm{g}] + \mc{G}_2[\bm{g}]\,,\\
  \mc{K}_1 &= -(I- \overline{\mc{L}}_\epsilon^{\rm tang}\p_{ss})^{-1}\p_s\mc{R}_{Q1} \\
  \mc{K}_2 &= -(I- \overline{\mc{L}}_\epsilon^{\rm tang}\p_{ss})^{-1}\p_s\mc{R}_{Q2}- (I- \overline{\mc{L}}_\epsilon^{\rm tang}\p_{ss})^{-1}(\mc{R}_{Q3}+I) \\
  \mc{G}_1[\bm{g}] &= (I- \overline{\mc{L}}_\epsilon^{\rm tang}\p_{ss})^{-1}\p_s(\mc{L}_\epsilon[\bm{g}]\cdot\X_s)\\
  \mc{G}_2[\bm{g}] &= -(I- \overline{\mc{L}}_\epsilon^{\rm tang}\p_{ss})^{-1}\mc{L}_\epsilon[\bm{g}]\cdot\X_{ss}\,.
\end{aligned}
\end{equation}
By Lemma \ref{lem:TDP_auxlem}, we may split $\mc{K}_1$ as $\mc{K}_{1,\epsilon}$ and $\mc{K}_{1,+}$ satisfying 
\begin{equation}
\begin{aligned}
  \norm{\mc{K}_{1,\epsilon}[\psi]}_{C^{1,\alpha}} &\le c\,\epsilon\norm{\mc{R}_{Q1}[\psi]}_{C^{1,\alpha}} \le c(\norm{\X}_{C^{2,\alpha^+}},c_\Gamma)\,\epsilon^{1-\alpha^+}\abs{\log\epsilon}^2\norm{\psi}_{C^{1,\alpha}} \\
  \norm{\mc{K}_{1,+}[\psi]}_{C^{2,\alpha}} &\le c(\epsilon)\norm{\mc{R}_{Q1}[\psi]}_{C^{1,\alpha}} \le c(\epsilon,\norm{\X}_{C^{3,\alpha}},c_\Gamma)\norm{\psi}_{C^{1,\alpha}}
\end{aligned}
\end{equation}
for $\alpha^+>\alpha$.
Again using Lemma \ref{lem:TDP_auxlem}, $\mc{K}_2$ may be bounded as 
\begin{equation}
\begin{aligned}
  \norm{\mc{K}_2[\psi]}_{C^{1,\gamma}} \le c(\epsilon)\big(\norm{\mc{R}_{Q2}[\psi]}_{C^{1,\gamma}}+\norm{(\mc{R}_{Q3}+I)[\psi]}_{C^{0,\gamma}}
  \le c(\epsilon,\norm{\X}_{C^{3,\alpha}},c_\Gamma)\norm{\psi}_{C^{1,\alpha}}
\end{aligned}
\end{equation}
for any $\alpha<\gamma<1$. In addition, by Proposition \ref{prop:Qdecomp}, given two nearby filaments with centerlines $\X^{(a)}(s)$, $\X^{(b)}(s)$, the differences between the corresponding operators $\mc{K}_j^{(a)}-\mc{K}_j^{(b)}$ satisfy
\begin{equation}
\begin{aligned}
  &\norm{(\mc{K}_{1,\epsilon}^{(a)}- \mc{K}_{1,\epsilon}^{(b)})[\psi]}_{C^{1,\alpha}}\\
  &\qquad \le c(\|\X^{(a)}\|_{C^{2,\alpha^+}},\|\X^{(b)}\|_{C^{2,\alpha^+}},c_\Gamma)\, \epsilon^{-\alpha^+}\abs{\log\epsilon}^2\norm{\X^{(a)}-\X^{(b)}}_{C^{2,\alpha^+}}\|\psi\|_{C^{1,\alpha}}\\
  &\norm{(\mc{K}_{1,+}^{(a)}-\mc{K}_{1,+}^{(b)})[\psi]}_{C^{2,\alpha}} \le c(\epsilon,\|\X^{(a)}\|_{C^{3,\alpha}},\|\X^{(b)}\|_{C^{3,\alpha}},c_\Gamma)\norm{\X^{(a)}-\X^{(b)}}_{C^{2,\alpha^+}}\norm{\psi}_{C^{1,\alpha}}\\
  &\norm{(\mc{K}_2^{(a)}-\mc{K}_2^{(b)})[\psi]}_{C^{1,\gamma}} \le c(\epsilon,\|\X^{(a)}\|_{C^{3,\alpha}},\|\X^{(b)}\|_{C^{3,\alpha}},c_\Gamma)\norm{\X^{(a)}-\X^{(b)}}_{C^{3,\alpha}}\norm{\psi}_{C^{1,\alpha}}\,.
\end{aligned}
\end{equation}

We next consider the right hand side of \eqref{eq:rewrite1}. First, using the decomposition of Theorem \ref{thm:decomp} for $\mc{L}_\epsilon$, we may write 
\begin{equation}
\begin{aligned}
  \mc{L}_\epsilon[\bm{g}]\cdot\X_s &= \be_z\cdot\overline{\mc{L}}_\epsilon[\Phi^{-1}\bm{g}_0^\Phi(s)]  + \be_{\rm t}\cdot\mc{R}_{\rm n,\epsilon}\big[\Phi\overline{\mc{L}}_\epsilon[\Phi^{-1}\bm{g}_0^\Phi(s)]\big] +\be_{\rm t}\cdot\mc{R}_{\rm n,+}[\bm{g}(s)]\\
  &= \overline{\mc{L}}_\epsilon^{\rm tang}\big[\be_{\rm t}\cdot\bm{g}-\int_{\T}\be_{\rm t}\cdot\bm{g}\,ds \big] + \be_{\rm t}\cdot\mc{R}_{\rm n,\epsilon}\big[\Phi\overline{\mc{L}}_\epsilon[\Phi^{-1}\bm{g}_0^\Phi(s)]\big] +\be_{\rm t}\cdot\mc{R}_{\rm n,+}[\bm{g}(s)]\,.
\end{aligned}
\end{equation}
We may thus decompose
\begin{equation}
\begin{aligned}
  \mc{G}_1[\bm{g}] &= \mc{G}_0[\bm{g}] + \mc{G}_{1,1}[\bm{g}] +\mc{G}_{1,2}[\bm{g}]\,,\\
  \mc{G}_0[\bm{g}] &=(I- \overline{\mc{L}}_\epsilon^{\rm tang}\p_{ss})^{-1}\p_s\overline{\mc{L}}_\epsilon^{\rm tang}\big[\be_{\rm t}\cdot\bm{g}-\int_{\T}\be_{\rm t}\cdot\bm{g}\,ds \big] \\
  \mc{G}_{1,1}[\bm{g}] &= (I- \overline{\mc{L}}_\epsilon^{\rm tang}\p_{ss})^{-1}\p_s\big(\be_{\rm t}\cdot\mc{R}_{\rm n,\epsilon}\big[\Phi\overline{\mc{L}}_\epsilon[\Phi^{-1}\bm{g}_0^\Phi(s)]\big]\big)\\
  \mc{G}_{1,2}[\bm{g}] &= (I- \overline{\mc{L}}_\epsilon^{\rm tang}\p_{ss})^{-1}\p_s\big(\be_{\rm t}\cdot\mc{R}_{\rm n,+}[\bm{g}(s)] \big)\,.
\end{aligned}
\end{equation}
Note that by Lemmas \ref{lem:TDP_auxlem} and \ref{lem:straight_Leps}, we may write $\mc{G}_0$ as $\mc{G}_0=\mc{G}_{0\epsilon}+\mc{G}_{0+}$, where
\begin{equation}\label{eq:G0ests}
\begin{aligned}
  \norm{\mc{G}_{0\epsilon}[\bm{g}]}_{C^{1,\alpha}} &\le c\,\epsilon\norm{\overline{\mc{L}}_\epsilon^{\rm tang}\big[\be_{\rm t}\cdot\bm{g}-\int_{\T}\be_{\rm t}\cdot\bm{g}\,ds\big]}_{C^{1,\alpha}}\\
  &\le c\abs{\log\epsilon}\norm{\be_{\rm t}\cdot\bm{g}-\int_{\T}\be_{\rm t}\cdot\bm{g}\,ds}_{C^{0,\alpha}} 
  \le c\abs{\log\epsilon}\norm{\be_{\rm t}\cdot\bm{g}}_{C^{0,\alpha}} \\
  \norm{\mc{G}_{0+}[\bm{g}]}_{C^{2,\alpha}} &\le c(\epsilon)\norm{\overline{\mc{L}}_\epsilon^{\rm tang}\big[\be_{\rm t}\cdot\bm{g}-\int_{\T}\be_{\rm t}\cdot\bm{g}\,ds\big]}_{C^{1,\alpha}}
  \le c(\epsilon)\norm{\be_{\rm t}\cdot\bm{g}}_{C^{0,\alpha}}\,.
\end{aligned}
\end{equation}
By Lemma \ref{lem:TDP_auxlem}, we may also write $\mc{G}_{1,1}[\bm{g}]=\mc{G}_{1,1\epsilon}[\bm{g}]+\mc{G}_{1,1+}[\bm{g}]$ where, using Theorem \ref{thm:decomp} and Lemma \ref{lem:straight_Leps}, we have
\begin{equation}
\begin{aligned}
  \norm{\mc{G}_{1,1\epsilon}[\bm{g}]}_{C^{1,\alpha}} &\le c\,\epsilon\norm{\be_{\rm t}\cdot\mc{R}_{\rm n,\epsilon}\big[\Phi\overline{\mc{L}}_\epsilon[\Phi^{-1}\bm{g}_0^\Phi(s)]\big]}_{C^{1,\alpha}}\\
  &\le c(\|\X\|_{C^{2,\alpha^+}},c_\Gamma)\,\epsilon^{2-\alpha^+}\abs{\log\epsilon}\norm{\overline{\mc{L}}_\epsilon[\Phi^{-1}\bm{g}_0^\Phi(s)]\big]}_{C^{1,\alpha}}\\
  &\le c(\|\X\|_{C^{2,\alpha^+}},c_\Gamma)\,\epsilon^{1-\alpha^+}\abs{\log\epsilon}^2\norm{\bm{g}}_{C^{0,\alpha}}\\
  \norm{\mc{G}_{1,1+}[\bm{g}]}_{C^{2,\alpha}} &\le 
  c(\epsilon)\norm{\be_{\rm t}\cdot\mc{R}_{\rm n,\epsilon}\big[\Phi\overline{\mc{L}}_\epsilon[\Phi^{-1}\bm{g}_0^\Phi(s)]\big]}_{C^{1,\alpha}} \\
  &\le c(\epsilon,\|\X\|_{C^{2,\alpha^+}},c_\Gamma)\norm{\bm{g}}_{C^{0,\alpha}}\,.
\end{aligned}
\end{equation}
In addition, given two nearby filaments with centerlines $\X^{(a)}(s)$ and $\X^{(b)}(s)$, we may obtain the following Lipschitz bounds: 
\begin{equation}
\begin{aligned}
  &\norm{(\mc{G}_{1,1\epsilon}^{(a)}-\mc{G}_{1,1\epsilon}^{(b)})[\bm{g}]}_{C^{1,\alpha}} \\
  &\qquad \le c(\|\X^{(a)}\|_{C^{2,\alpha^+}},\|\X^{(b)}\|_{C^{2,\alpha^+}},c_\Gamma)\,\epsilon^{1-\alpha^+}\abs{\log\epsilon}^2\norm{\X^{(a)}-\X^{(b)}}_{C^{2,\alpha^+}}\norm{\bm{g}}_{C^{0,\alpha}}\\
  &\norm{(\mc{G}_{1,1+}^{(a)}-\mc{G}_{1,1+}^{(b)})[\bm{g}]}_{C^{2,\alpha}} 
  \le c(\epsilon,\|\X^{(a)}\|_{C^{2,\alpha^+}},\|\X^{(b)}\|_{C^{2,\alpha^+}},c_\Gamma)\norm{\X^{(a)}-\X^{(b)}}_{C^{2,\alpha^+}}\norm{\bm{g}}_{C^{0,\alpha}}\,.
\end{aligned}
\end{equation}
Furthermore, using Lemma \ref{lem:TDP_auxlem} and Theorem \ref{thm:decomp}, for any $\alpha<\gamma<1$, we may estimate $\mc{G}_{1,2}$ as
\begin{equation}
\begin{aligned}
  \norm{\mc{G}_{1,2}[\bm{g}]}_{C^{1,\gamma}} &\le c(\epsilon)\norm{\be_{\rm t}\cdot\mc{R}_{\rm n,+}[\bm{g}]}_{C^{1,\gamma}} \\
  &\le c(\epsilon,\|\X\|_{C^{3,\alpha}},c_\Gamma)\norm{\bm{g}}_{C^{0,\alpha}}\\
  \norm{(\mc{G}_{1,2}^{(a)}-\mc{G}_{1,2}^{(b)})[\bm{g}]}_{C^{1,\gamma}} &\le
  c(\epsilon,\|\X^{(a)}\|_{C^{3,\alpha}},\|\X^{(b)}\|_{C^{3,\alpha}},c_\Gamma)\norm{\X^{(a)}-\X^{(b)}}_{C^{2,\alpha^+}}\norm{\bm{g}}_{C^{0,\alpha}}\,.
\end{aligned}
\end{equation}
Finally, again using Lemma \ref{lem:TDP_auxlem} and Theorem \ref{thm:decomp}, we may estimate $\mc{G}_2$ as 
\begin{equation}
\begin{aligned}
  \norm{\mc{G}_2[\bm{g}]}_{C^{2,\alpha}} &\le c(\epsilon)\norm{\mc{L}_\epsilon[\bm{g}]\cdot\X_{ss}}_{C^{1,\alpha}}
  \le c(\epsilon,\norm{\X}_{C^{3,\alpha}},c_\Gamma)\norm{\bm{g}}_{C^{0,\alpha}} \\
  \norm{(\mc{G}_2^{(a)}-\mc{G}_2^{(b)})[\bm{g}]}_{C^{2,\alpha}} &\le 
  c(\epsilon,\|\X^{(a)}\|_{C^{3,\alpha}},\|\X^{(b)}\|_{C^{3,\alpha}},c_\Gamma)\norm{\X^{(a)}-\X^{(b)}}_{C^{2,\alpha^+}}\norm{\bm{g}}_{C^{0,\alpha}}\,.
\end{aligned}
\end{equation}

In total, we may rewrite \eqref{eq:rewrite1} as
\begin{equation}\label{eq:Keps_def}
  (I+ \mc{K}_\epsilon + \mc{K}_+)[\tau] = (\mc{G}_0+\wt{\mc{G}}_\epsilon+\wt{\mc{G}}_+)[\bm{g}]\,,
\end{equation}
where the left hand side satisfies
\begin{equation}\label{eq:Kbds1}
\begin{aligned}
  \norm{\mc{K}_\epsilon[\psi]}_{C^{1,\alpha}} &\le c(\norm{\X}_{C^{2,\alpha^+}},c_\Gamma)\,\epsilon^{1-\alpha^+}\abs{\log\epsilon}^2\norm{\psi}_{C^{1,\alpha}} \\
  \norm{\mc{K}_+[\psi]}_{C^{1,\gamma}} &\le c(\epsilon,\norm{\X}_{C^{3,\alpha}},c_\Gamma)\norm{\psi}_{C^{1,\alpha}}
\end{aligned}
\end{equation}
as well as 
\begin{equation}\label{eq:Kbds2}
\begin{aligned}
  &\norm{(\mc{K}_\epsilon^{(a)}-\mc{K}_\epsilon^{(b)})[\psi]}_{C^{1,\alpha}}\\ 
  &\qquad \le c(\|\X^{(a)}\|_{C^{2,\alpha^+}},\|\X^{(b)}\|_{C^{2,\alpha^+}},c_\Gamma)\,\epsilon^{1-\alpha^+}\abs{\log\epsilon}^2\norm{\X^{(a)}-\X^{(b)}}_{C^{2,\alpha^+}}\norm{\psi}_{C^{1,\alpha}} \\
  &\norm{(\mc{K}_+^{(a)}[\psi]-\mc{K}_+^{(b)}[\psi])}_{C^{1,\gamma}} \le c(\epsilon,\|\X^{(a)}\|_{C^{3,\alpha}},\|\X^{(b)}\|_{C^{3,\alpha}},c_\Gamma)\norm{\X^{(a)}-\X^{(b)}}_{C^{3,\alpha}}\norm{\psi}_{C^{1,\alpha}}\,.
\end{aligned}
\end{equation}
In addition, recalling \eqref{eq:G0ests}, the right hand side satisfies 
\begin{equation}\label{eq:Gests1}
\begin{aligned}
  \norm{\mc{G}_{0\epsilon}[\bm{g}]}_{C^{1,\alpha}}&\le c\abs{\log\epsilon}\norm{\be_{\rm t}\cdot\bm{g}}_{C^{0,\alpha}}\,, \quad 
  \norm{\mc{G}_{0+}[\bm{g}]}_{C^{2,\alpha}}\le c(\epsilon)\norm{\be_{\rm t}\cdot\bm{g}}_{C^{0,\alpha}}
  \\
  \norm{\wt{\mc{G}}_\epsilon[\bm{g}]}_{C^{1,\alpha}}&\le c(\|\X\|_{C^{2,\alpha^+}},c_\Gamma)\,\epsilon^{1-\alpha^+}\abs{\log\epsilon}^2\norm{\bm{g}}_{C^{0,\alpha}}\\
  \norm{\wt{\mc{G}}_+[\bm{g}]}_{C^{1,\gamma}}&\le c(\epsilon,\norm{\X}_{C^{3,\alpha}},c_\Gamma)\norm{\bm{g}}_{C^{0,\alpha}}
\end{aligned}
\end{equation}
as well as 
\begin{equation}\label{eq:Gests2}
\begin{aligned}
  %
  &\norm{(\wt{\mc{G}}_\epsilon^{(a)}-\wt{\mc{G}}_\epsilon^{(b)})[\bm{g}]}_{C^{1,\alpha}}\\
  &\quad\qquad\le c(\|\X^{(a)}\|_{C^{2,\alpha^+}},\|\X^{(b)}\|_{C^{2,\alpha^+}},c_\Gamma)\,\epsilon^{1-\alpha^+}\abs{\log\epsilon}^2\norm{\X^{(a)}-\X^{(b)}}_{C^{2,\alpha^+}}\norm{\bm{g}}_{C^{0,\alpha}}\\
  &\norm{(\wt{\mc{G}}_+^{(a)}-\wt{\mc{G}}_+^{(b)})[\bm{g}]}_{C^{1,\gamma}}\le c(\epsilon,\|\X^{(a)}\|_{C^{3,\alpha}},\|\X^{(b)}\|_{C^{3,\alpha}},c_\Gamma)\norm{\X^{(a)}-\X^{(b)}}_{C^{2,\alpha^+}}\norm{\bm{g}}_{C^{0,\alpha}}\,.
\end{aligned}
\end{equation}

Now, for $\epsilon$ sufficiently small, we may use a Neumann series expansion to write 
\begin{equation}\label{eq:IplusKinv}
\begin{aligned}
  &(I+\mc{K}_\epsilon)^{-1} = I+\sum_{j=1}^\infty \mc{K}_\epsilon^j=: I+\wt{\mc{K}}_\epsilon\,,\\
  &\norm{\wt{\mc{K}}_\epsilon[\psi]}_{C^{1,\alpha}}\le c(\norm{\X}_{C^{2,\alpha^+}},c_\Gamma)\,\epsilon^{1-\alpha^+}\abs{\log\epsilon}^2\norm{\psi}_{C^{1,\alpha}}\\
  &\norm{(\wt{\mc{K}}_\epsilon^{(a)}-\wt{\mc{K}}_\epsilon^{(b)})[\psi]}_{C^{1,\alpha}}\\
  &\qquad \le c(\|\X^{(a)}\|_{C^{2,\alpha^+}},\|\X^{(b)}\|_{C^{2,\alpha^+}},c_\Gamma)\,\epsilon^{1-\alpha^+}\abs{\log\epsilon}^2\norm{\X^{(a)}-\X^{(b)}}_{C^{2,\alpha^+}}\norm{\psi}_{C^{1,\alpha}}\,.
\end{aligned}
\end{equation}
Then, writing
\begin{equation}
  (I+ (I+\mc{K}_\epsilon)^{-1}\mc{K}_+)[\tau] = (I+\mc{K}_\epsilon)^{-1}(\mc{G}_0+\wt{\mc{G}}_\epsilon+\wt{\mc{G}}_+)[\bm{g}]\,,
\end{equation}
for $\epsilon$ sufficiently small, we may define
\begin{equation}
\begin{aligned}
  \mc{K} &= (I+\wt{\mc{K}}_\epsilon)\mc{K}_+ \\
  \mc{G}_\epsilon &= \wt{\mc{K}}_\epsilon\mc{G}_{0\epsilon} + (I+\wt{\mc{K}}_\epsilon)\wt{\mc{G}}_\epsilon\\
  \mc{G}_+' &= \wt{\mc{K}}_\epsilon\mc{G}_{0+} +(I+\wt{\mc{K}}_\epsilon)\wt{\mc{G}}_+\,.
\end{aligned}
\end{equation}
Applying the estimates \eqref{eq:Kbds1}, \eqref{eq:Kbds2}, \eqref{eq:Gests1}, \eqref{eq:Gests2}, and \eqref{eq:IplusKinv}, we obtain Proposition \ref{prop:TDP_reform}.
\end{proof}

\subsection{Solvability of tension determination problem}\label{subsec:qual_TDP}
Here we prove solvability of the reformulated tension determination problem of Proposition \ref{prop:TDP_reform} and derive a coarse bound for the solution. We begin with general invertibility of the operator $I+\mc{K}$ appearing on the left hand side of \eqref{eq:TDP_reform}. In particular, given $h(s)\in C^{1,\alpha}(\T)$, we consider solving the equation 
\begin{equation}\label{eq:TDP_h}
(I+\mc{K})[\psi] = h(s)
\end{equation}
for $\psi$. Noting that $\mc{K}$ is compact from $C^{1,\alpha}(\T)$ to $C^{1,\alpha}(\T)$, invertibility follows by the Fredholm alternative provided the nullspace of $I+\mc{K}$ is trivial. Unpacking, we note that the operator $I+\mc{K}$ may be written as 
\begin{equation}\label{eq:unpack_IK}
  I+\mc{K} = (I+\mc{K}_\epsilon)^{-1}(I- \overline{\mc{L}}_\epsilon^{\rm tang}\p_{ss})^{-1}\mc{Q}_\epsilon
\end{equation}
where $\mc{K}_\epsilon$ is as in \eqref{eq:Keps_def} and $\overline{\mc{L}}_\epsilon^{\rm tang}$ is as in \eqref{eq:tang_and_nor}. Given the bound \eqref{eq:Kbds1} for $\mc{K}_\epsilon$ and Lemma \ref{lem:TDP_auxlem} for $(I-\overline{\mc{L}}_\epsilon^{\rm tang}\p_{ss})^{-1}$, it will suffice to show that the nullspace of $\mc{Q}_\epsilon$ is trivial.
After incorporating the curve dependence into the resulting bound, we obtain the following proposition. 
\begin{proposition}\label{prop:tdp_qual}
  Given $\Sigma_\epsilon$ as in \eqref{eq:SigmaEps}-\eqref{eq:rstar} with centerline $\X\in C^{3,\alpha}(\T)$, $0<\alpha<1$, and given $h\in C^{1,\alpha}(\T)$, there exists a unique solution $\psi\in C^{1,\alpha}(\T)$ to the equation \eqref{eq:TDP_h} which satisfies
  \begin{equation}\label{eq:tauh_bd}
  \norm{\psi}_{C^{1,\alpha}(\T)}\le c(\epsilon,\norm{\X}_{C^{3,\alpha}},c_\Gamma)\norm{h}_{C^{1,\alpha}(\T)}\,.
  \end{equation}

Moreover, given nearby filaments with centerlines $\X^{(a)}$, $\X^{(b)}$ in $C^{3,\alpha}(\T)$, the difference $\psi^{(a)}-\psi^{(b)}= \big((I+\mc{K}^{(a)})^{-1}-(I+\mc{K}^{(b)})^{-1}\big)[h]$ satisfies  
  \begin{equation}\label{eq:tauh_lip}
  \norm{\psi^{(a)}-\psi^{(b)}}_{C^{1,\alpha}(\T)}\le c(\epsilon,\|\X^{(a)}\|_{C^{3,\alpha}},\|\X^{(b)}\|_{C^{3,\alpha}},c_\Gamma)\norm{\X^{(a)}-\X^{(b)}}_{C^{3,\alpha}}\norm{h}_{C^{1,\alpha}(\T)}
  \end{equation}
  for $\alpha^+>\alpha$.
\end{proposition}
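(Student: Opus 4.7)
The plan is to combine the Fredholm alternative with the factorization $I+\mc{K} = (I+\mc{K}_\epsilon)^{-1}(I-\overline{\mc{L}}_\epsilon^{\rm tang}\p_{ss})^{-1}\mc{Q}_\epsilon$ from \eqref{eq:unpack_IK}. First I would verify that $\mc{K}$ is compact on $C^{1,\alpha}(\T)$: by Proposition~\ref{prop:TDP_reform} it is bounded $C^{1,\alpha}(\T)\to C^{1,\gamma}(\T)$ for some $\gamma>\alpha$, and Arzelà--Ascoli gives compactness of the embedding $C^{1,\gamma}(\T)\hookrightarrow C^{1,\alpha}(\T)$. So the Fredholm alternative applies, and solvability of \eqref{eq:TDP_h} with the desired bound reduces to showing that the nullspace of $I+\mc{K}$ is trivial. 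Using the factorization, together with invertibility of $(I+\mc{K}_\epsilon)^{-1}$ for $\epsilon$ small (Neumann series, as in \eqref{eq:IplusKinv}) and of $(I-\overline{\mc{L}}_\epsilon^{\rm tang}\p_{ss})^{-1}$ (Lemma~\ref{lem:TDP_auxlem}), it further reduces to triviality of the nullspace of $\mc{Q}_\epsilon$.

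For the nullspace of $\mc{Q}_\epsilon$, suppose $\mc{Q}_\epsilon[\psi]=0$. Testing against $\psi$ and using self-adjointness of $\mc{L}_\epsilon$ together with the energy identity \eqref{eq:energyID} applied to the slender body BVP \eqref{eq:SB_PDE} with data $\bm{f}=(\psi\X_s)_s$, one finds
\[
0 = \int_\T \mc{Q}_\epsilon[\psi]\,\psi\,ds = -\int_\T \mc{L}_\epsilon[(\psi\X_s)_s]\cdot(\psi\X_s)_s\,ds = -\int_{\R^3\setminus\overline{\Sigma_\epsilon}} 2\abs{\E(\bu)}^2\,d\bx,
\]
so $\E(\bu)\equiv 0$. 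Combined with decay at infinity, $\bu\equiv 0$, and then the Stokes equation forces $p$ constant, hence $p\equiv 0$, so the stress data $(\psi\X_s)_s$ vanishes. Integrating, $\psi\X_s=\bm{c}$ for a constant vector; dotting with $\X_s$ and using $\abs{\X_s}^2=1$ yields $\psi=\bm{c}\cdot\X_s$ and $\bm{c}=(\bm{c}\cdot\X_s)\X_s$. If $\bm{c}\neq 0$ then $\X_s$ is everywhere parallel to $\bm{c}$, which by continuity forces $\X$ to be a straight line, contradicting closedness. Thus $\bm{c}=0$ and $\psi=0$.

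The bound \eqref{eq:tauh_bd} then follows from the open mapping theorem applied to the isomorphism $I+\mc{K}$: the operator norm of $(I+\mc{K})^{-1}$ yields the constant $c(\epsilon,\norm{\X}_{C^{3,\alpha}},c_\Gamma)$, since the bounds for $\mc{K}$ in Proposition~\ref{prop:TDP_reform} are controlled by these quantities. For the Lipschitz estimate \eqref{eq:tauh_lip}, I would apply the standard resolvent identity
\[
(I+\mc{K}^{(a)})^{-1} - (I+\mc{K}^{(b)})^{-1} = (I+\mc{K}^{(a)})^{-1}\big(\mc{K}^{(b)}-\mc{K}^{(a)}\big)(I+\mc{K}^{(b)})^{-1},
\]
then combine the $C^{1,\alpha}$ bound \eqref{eq:tauh_bd} for each inverse with the Lipschitz control of $\mc{K}^{(a)}-\mc{K}^{(b)}$ from Proposition~\ref{prop:TDP_reform} (using that the map $C^{1,\gamma}\hookrightarrow C^{1,\alpha}$ absorbs the difference on the intermediate range). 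The main obstacle is the non-explicit nature of the constant supplied by the Fredholm alternative: to package it cleanly as a function of $\norm{\X}_{C^{3,\alpha}}$ and $c_\Gamma$, one either argues by a contradiction/compactness argument along sequences of filaments with bounded $C^{3,\alpha}$ norm and $\abs{\X_n}_\star\ge c_\Gamma$ (using Arzelà--Ascoli to extract limits at which nullspace triviality already fails), or tracks the constants directly through the factorization \eqref{eq:unpack_IK} using a quantitative coercivity estimate for $\mc{Q}_\epsilon$ derived from the energy identity together with a Poincaré-type inequality for curves; the contradiction route is the shortest.
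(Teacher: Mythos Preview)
Your proposal is correct and follows essentially the same route as the paper: compactness of $\mc{K}$ via Proposition~\ref{prop:TDP_reform} and Arzel\`a--Ascoli, the factorization \eqref{eq:unpack_IK} to reduce to triviality of $\ker\mc{Q}_\epsilon$, the energy identity to kill $\psi$, a contradiction/compactness argument along sequences of curves with bounded $C^{3,\alpha}$ norm to upgrade the Fredholm constant to one depending only on $(\epsilon,\|\X\|_{C^{3,\alpha}},c_\Gamma)$, and the resolvent identity for \eqref{eq:tauh_lip}. One small difference worth noting: your nullspace argument is slightly more direct than the paper's. You test $\mc{Q}_\epsilon[\psi]=0$ against $\psi$ itself, which after one integration by parts gives $\int_\T \mc{L}_\epsilon[(\psi\X_s)_s]\cdot(\psi\X_s)_s\,ds = \int 2|\E(\bu)|^2 = 0$ in one stroke. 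The paper instead tests against an arbitrary $\phi\in C^\infty(\T)$, which only shows $\mc{L}_\epsilon[(\psi\X_s)_s]$ is either constant or orthogonal to both $\X_s$ and $\X_{ss}$, and then handles each case separately via the energy identity; both routes land in the same place.
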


\begin{proof}
We begin by proving the solvability claim along with the qualitative bound 
\begin{equation}\label{eq:qual_bd}
  \norm{\psi}_{C^{1,\alpha}(\T)}\le c(\X,\epsilon)\norm{h}_{C^{1,\alpha}(\T)}
\end{equation}
where $\X$ is any $C^{2,\gamma^+}$ curve satisfying \eqref{eq:star_norm}.
As mentioned, given the form \eqref{eq:unpack_IK} of $I+\mc{K}$, the proof of solvability of \eqref{eq:TDP_h} amounts to verifying that the operator $\mc{Q}_\epsilon$ has only a trivial kernel for any filament $\Sigma_\epsilon$ as in \eqref{eq:SigmaEps}-\eqref{eq:rstar}.

Note that if $\mc{Q}_\epsilon[\psi]=0$, then, using the definition \eqref{eq:Qeps} of $\mc{Q}_\epsilon$, we have 
\begin{equation}
0= \int_\T \big(\mc{L}_\epsilon[(\psi\X_s)_s]\big)_s\cdot\X_s\,\phi(s)\,ds 
= -\int_\T \mc{L}_\epsilon[(\psi\X_s)_s]\cdot\big(\X_s\,\phi(s)\big)_s\,ds
\end{equation}
for any $\phi\in C^\infty(\T)$; in particular, either $\mc{L}_\epsilon[(\psi\X_s)_s]\perp\{\X_s,\X_{ss}\}$ or $\mc{L}_\epsilon[(\psi\X_s)_s]$ is a constant vector. In the first case, using the energy identity \eqref{eq:energyID} with $\bm{f}=(\psi\X_s)_s$, we have
\begin{equation}
\int_{\Omega_\epsilon}2\abs{\E(\bu)}^2\,d\bx = \int_\T \mc{L}_\epsilon[(\psi\X_s)_s]\cdot \big(\psi\X_{ss}+\psi_s\X_s \big)\,ds =0\,,
\end{equation}
so, by the Korn inequality in $\Omega_\epsilon$ \cite[Lemma 2.6]{closed_loop} and the decay of $\abs{\bu}$ as $\abs{\bx}\to\infty$, we have $\bu\equiv 0$ throughout $\Omega_\epsilon$ and hence $\bu\big|_{\Gamma_\epsilon}=\mc{L}_\epsilon[(\psi\X_s)_s]=0$. By uniqueness of solutions to the slender body boundary value problem (Proposition \ref{prop:SB_BVP}), we must then have $(\psi\X_s)_s=0$. Since $(\psi\X_s)_s=\psi\X_{ss}+\psi_s\X_s$, using that $\X_{ss}\perp\X_s$ and $\abs{\X_s}=1$, we obtain that $\psi=0$. 
Similarly, in the second case, if $\mc{L}_\epsilon[(\psi\X_s)_s]$ is constant, we have
\begin{equation}
\int_{\Omega_\epsilon}2\abs{\E(\bu)}^2\,d\bx = \mc{L}_\epsilon[(\psi\X_s)_s]\cdot\int_\T (\psi\X_s)_s\,ds =0\,,
\end{equation}
by periodicity, leading again to the conclusion that $\psi=0$. The qualitative bound \eqref{eq:qual_bd} follows from the mapping properties of $I+\mc{K}$ in Proposition \ref{prop:TDP_reform}.

We next consider the dependence of the bound \eqref{eq:qual_bd} on the curve $\X$. Letting $\norm{\X}_{C^{3,\alpha}}\le M$, we claim that the bound \eqref{eq:qual_bd} is in fact uniform in $M$. If not, fixing $h\in C^{1,\alpha}$ with $\norm{h}_{C^{1,\alpha}}=1$, we may select a sequence of curves $\X_j$ satisfying $\norm{\X_j}_{C^{3,\alpha}}\le M$ and $\abs{\X}_\star\ge c_\Gamma$ such that
\begin{equation}
  \norm{(I+\mc{K}_j)^{-1}[h]}_{C^{1,\alpha}}>j\,.
\end{equation}
Then, letting
\begin{equation}
  \varphi_j = \frac{h}{\norm{(I+\mc{K}_j)^{-1}[h]}_{C^{1,\alpha}}}\,, \quad
  \eta_j = \frac{(I+\mc{K}_j)^{-1}[h]}{\norm{(I+\mc{K}_j)^{-1}[h]}_{C^{1,\alpha}}}\,,
\end{equation}
we have that $(I+\mc{K}_j)[\eta_j]=\varphi_j\to 0$ in $C^{1,\alpha}$. By Proposition \ref{prop:TDP_reform}, we have $\mc{K}_j[\eta_j]\in C^{1,\gamma}$, $\gamma>\alpha$, with a uniform bound in $j$. Therefore, along some subsequence $j_\ell$, we have
\begin{equation}
  \eta_{j_\ell} = \varphi_{j_\ell} - \mc{K}_{j_\ell}[\eta_{j_\ell}]\to \eta_\infty\,, \qquad \norm{\eta_\infty}_{C^{1,\alpha}}=1\,.
\end{equation}
Furthermore, since $\norm{\X_j}_{C^{3,\alpha}}\le M$, there exists a subsequence $\X_{j_k}$ of curves converging strongly in $C^{2,\gamma^+}$ to some limit curve $\X_{\infty}$ satisfying $\abs{\X}_\infty\ge c_\Gamma$.
By Proposition \ref{prop:TDP_reform}, the corresponding operators $\mc{K}$ satisfy
\begin{equation} 
\norm{(\mc{K}_{j_k}-\mc{K}_{\infty})[\psi]}_{C^{1,\gamma}}\le c(\epsilon,M,c_\Gamma)\norm{\X_{j_k}-\X_\infty}_{C^{2,\gamma^+}}\norm{\psi}_{C^{1,\alpha}}
\to 0 
\end{equation}
as $j_k\to\infty$. In particular, by a diagonalization argument, 
\begin{equation}
  \mc{K}_j[\eta_j]\to  \mc{K}_\infty[\eta_\infty]
\end{equation}
along a subsequence, and $(I+\mc{K}_\infty)[\eta_\infty]=0$. By the qualitative solvability \eqref{eq:qual_bd}, we have $\eta_\infty=0$, contradicting $\norm{\eta_\infty}_{C^{1,\alpha}}=1$.

Finally, we turn to the Lipschitz bound \eqref{eq:tauh_lip}. Given two nearby filaments with centerlines $\X^{(a)},\X^{(b)}\in C^{3,\alpha}$, we seek a bound for the corresponding tension difference $\psi^{(a)}-\psi^{(b)}$, where $\psi^{(a)}$ and $\psi^{(b)}$ satisfy
\begin{equation}
  (I+\mc{K}^{(a)})[\psi^{(a)}] = h(s) = (I+\mc{K}^{(b)})[\psi^{(b)}]\,.
\end{equation}
Rearranging, we may write 
\begin{equation}
\begin{aligned}
  \psi^{(a)}-\psi^{(b)}&= \mc{K}^{(b)}[\psi^{(b)}] - \mc{K}^{(a)}[\psi^{(a)}]\\
  &= (\mc{K}^{(b)}-\mc{K}^{(a)})[\psi^{(b)}] - \mc{K}^{(a)}[\psi^{(a)}-\psi^{(b)}]\,,
\end{aligned}
\end{equation}
or, moving the second term on the right hand side to the left hand side,
\begin{equation}
  (I+\mc{K}^{(a)})[\psi^{(a)}-\psi^{(b)}] = (\mc{K}^{(b)}-\mc{K}^{(a)})[\psi^{(b)}]\,.
\end{equation}
Using \eqref{eq:tauh_bd} and the Lipschitz bound of Proposition \ref{prop:TDP_reform}, we may then estimate 
\begin{equation}
\begin{aligned}
  \norm{\psi^{(a)}-\psi^{(b)}}_{C^{1,\alpha}} &= \norm{(I+\mc{K}^{(a)})^{-1}(\mc{K}^{(b)}-\mc{K}^{(a)})[\psi^{(b)}]}_{C^{1,\alpha}}\\
  &\le c(\epsilon,\|\X^{(a)}\|_{C^{3,\alpha}},c_\Gamma)\norm{(\mc{K}^{(b)}-\mc{K}^{(a)})[\psi^{(b)}]}_{C^{1,\alpha}}\\
  &\le c(\epsilon,\|\X^{(a)}\|_{C^{3,\alpha}},\|\X^{(b)}\|_{C^{3,\alpha}},c_\Gamma)\norm{\X^{(a)}-\X^{(b)}}_{C^{3,\alpha}}\norm{\psi^{(b)}}_{C^{1,\alpha}}\\
  &\le c(\epsilon,\|\X^{(a)}\|_{C^{3,\alpha}},\|\X^{(b)}\|_{C^{3,\alpha}},c_\Gamma)\norm{\X^{(a)}-\X^{(b)}}_{C^{3,\alpha}}\norm{h}_{C^{1,\alpha}}\,.
\end{aligned}
\end{equation}
\end{proof}

\begin{remark}\label{rem:Peskin_TDP}
Proposition \ref{prop:tdp_qual} may be contrasted with the tension determination problem for the inextensible 2D Peskin problem \cite{kuo2023tension}, where, for a circular filament, the analogue to the operator $\mc{Q}_\epsilon$ does have a nontrivial kernel consisting of constants. 
In the 2D Peskin problem, any closed filament separates the domain into an interior and exterior component. In the special case of a circular filament, the tension cannot be uniquely determined because any constant may be added to the pressure of the fluid enclosed by filament, and this additional constant pressure jump between the interior and exterior may be absorbed into the filament tension.  

More directly, in the 2D Peskin problem, the operator $\mc{L}_\epsilon$ in \eqref{eq:Qeps} is replaced with the 2D Stokes single layer operator $\mc{S}_{2D}$: 
\begin{equation}
\begin{aligned}
\mc{Q}_{\rm Peskin}[\tau] &=  \big(\mc{S}_{2D}[(\tau\X_s)_s]\big)_s\cdot\X_s\,, \\
\mc{S}_{2D}[\bm{f}](s) &:= \frac{1}{4\pi}\int_{\T}\bigg(-\log(\abs{\bR}){\bf I}+\frac{\bR\bR^{\rm T}}{|\bR|^2}\bigg)\bm{f}(s')\,ds'\,, \quad \bR = \X(s)-\X(s')\,.
\end{aligned}
\end{equation} 
Here ${\bf I}$ is the $2\times2$ identity matrix.
The Stokes single layer operator along an interface in both 2D and 3D has a well-known kernel, namely, the vector field normal to the interface \cite{pozrikidis1992boundary}.
For a circle, we may write $\X_{ss}=\kappa\be_{\rm n}$ for constant $\kappa$ and $\be_{\rm n}$ the Frenet normal vector to the curve. Since the interface is truly 1D in the Peskin problem, the vector field given by $\be_{\rm n}$ is exactly the vector field normal to the interface, and hence, for any constant $c$, 
\begin{equation}
\mc{S}_{2D}[c\X_{ss}] = c\kappa\,\mc{S}_{2D}[\be_{\rm n}] = 0\,.
\end{equation}

In contrast, $\mc{L}_\epsilon[\be_{\rm n}]$ does not vanish even for the planar circle. In a sense, this makes the tension determination problem more straightforward for the 3D slender body problem than for the 2D Peskin problem, since any closed filament centerline satisfying \eqref{eq:star_norm} may be treated by the same arguments. 
\end{remark}


\begin{corollary}[Solvability of tension determination problem]\label{cor:TDP_solve}
Let $\X\in C^{3,\alpha}(\T)$, $0<\alpha<1$, be the centerline of a filament $\Sigma_\epsilon$ as in \eqref{eq:SigmaEps}-\eqref{eq:rstar}. Given $\bm{g}\in C^{0,\alpha}(\T)$, the reformulated tension determination problem of Proposition \ref{prop:TDP_reform} admits a unique solution $\tau\in C^{1,\alpha}(\T)$ satisfying 
  \begin{equation}
    \norm{\tau}_{C^{1,\alpha}(\T)} \le c(\epsilon,\norm{\X}_{C^{3,\alpha}},c_\Gamma)\norm{\bm{g}}_{C^{0,\alpha}}\,.
  \end{equation}
Moreover, given nearby filaments with centerlines $\X^{(a)}$, $\X^{(b)}$ in $C^{3,\alpha}(\T)$, the difference between the solutions to the corresponding tension determination problems satisfies 
  \begin{equation}
    \norm{\tau^{(a)}-\tau^{(b)}}_{C^{1,\alpha}(\T)} \le c(\epsilon,\|\X^{(a)}\|_{C^{3,\alpha}},\|\X^{(b)}\|_{C^{3,\alpha}},c_\Gamma)\norm{\X^{(a)}-\X^{(b)}}_{C^{3,\alpha}(\T)}\norm{\bm{g}}_{C^{0,\alpha}(\T)}\,.
  \end{equation}  
\end{corollary}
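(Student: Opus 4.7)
The plan is to obtain Corollary \ref{cor:TDP_solve} as an essentially immediate consequence of Propositions \ref{prop:TDP_reform} and \ref{prop:tdp_qual}. First, I would apply Proposition \ref{prop:TDP_reform} to replace the tension determination problem \eqref{eq:TDP} with the equivalent equation
\begin{equation*}
(I+\mc{K})[\tau] = \mc{G}[\bm{g}]\,, \qquad \mc{G} := \mc{G}_0+\mc{G}_\epsilon+\mc{G}_+'\,,
\end{equation*}
and then bound the right-hand side. Summing the estimates from Proposition \ref{prop:TDP_reform} gives the bounds for $\mc{G}_\epsilon$ and $\mc{G}_+'$; for $\mc{G}_0$ itself one reads off from its explicit formula, combined with Lemma \ref{lem:TDP_auxlem} (applied to $\p_s\overline{\mc{L}}_\epsilon^{\rm tang}[\be_{\rm t}\cdot\bm{g} - \int_\T \be_{\rm t}\cdot\bm{g}\,ds]$) and Lemma \ref{lem:straight_Leps}, that $\norm{\mc{G}_0[\bm{g}]}_{C^{1,\alpha}(\T)} \le c(\epsilon,\norm{\X}_{C^{2,\alpha}}) \norm{\bm{g}}_{C^{0,\alpha}(\T)}$. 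Adding the three contributions yields $\norm{\mc{G}[\bm{g}]}_{C^{1,\alpha}(\T)} \le c(\epsilon,\norm{\X}_{C^{3,\alpha}},c_\Gamma)\,\norm{\bm{g}}_{C^{0,\alpha}(\T)}$. Proposition \ref{prop:tdp_qual} then provides a unique inverse $(I+\mc{K})^{-1}:C^{1,\alpha}(\T)\to C^{1,\alpha}(\T)$ with the corresponding operator norm bound, so setting $\tau = (I+\mc{K})^{-1}\mc{G}[\bm{g}]$ gives existence and the claimed estimate, while uniqueness follows from the uniqueness portion of Proposition \ref{prop:tdp_qual}.

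For the Lipschitz statement, given nearby filaments with centerlines $\X^{(a)}$, $\X^{(b)}$, I would write
\begin{equation*}
\tau^{(a)} - \tau^{(b)} = (I+\mc{K}^{(a)})^{-1}\big(\mc{G}^{(a)}-\mc{G}^{(b)}\big)[\bm{g}] + \big((I+\mc{K}^{(a)})^{-1} - (I+\mc{K}^{(b)})^{-1}\big)\mc{G}^{(b)}[\bm{g}]\,,
\end{equation*}
and bound each term separately. The first term is controlled by applying the uniform bound on $(I+\mc{K}^{(a)})^{-1}$ from Proposition \ref{prop:tdp_qual} together with the Lipschitz bounds on $\mc{G}_\epsilon$ and $\mc{G}_+'$ from Proposition \ref{prop:TDP_reform}, supplemented by a direct estimate of $\mc{G}_0^{(a)} - \mc{G}_0^{(b)}$; this last term depends on $\X$ only through the tangent vector $\be_{\rm t} = \X_s$ acting on $\bm{g}$, so it is Lipschitz in $\|\X^{(a)}-\X^{(b)}\|_{C^1}$ with a constant depending on $\epsilon$. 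The second term is handled by the resolvent identity $(I+\mc{K}^{(a)})^{-1} - (I+\mc{K}^{(b)})^{-1} = (I+\mc{K}^{(a)})^{-1}(\mc{K}^{(b)} - \mc{K}^{(a)})(I+\mc{K}^{(b)})^{-1}$, together with the Lipschitz bound on $\mc{K}$ from Proposition \ref{prop:TDP_reform} and the $C^{1,\alpha}$ bound on $\mc{G}^{(b)}[\bm{g}]$ obtained above.

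There is no substantive obstacle here: the corollary is pure bookkeeping on top of the structural results already established. The weakest regularity in the curve dependence comes from the Lipschitz estimate on $\mc{K}$ (ultimately traceable to the Lipschitz bound on $\mc{R}_{Q3}$ in Proposition \ref{prop:Qdecomp} and on $\mc{R}_{\rm n,+}$ in Theorem \ref{thm:decomp}), which is $\norm{\X^{(a)} - \X^{(b)}}_{C^{3,\alpha}}$; this is what fixes the regularity appearing in the final estimate \eqref{eq:tauh_lip}. The constants are absorbed into the $c(\epsilon, \|\X^{(a)}\|_{C^{3,\alpha}}, \|\X^{(b)}\|_{C^{3,\alpha}}, c_\Gamma)$ factor and the proof is concluded.
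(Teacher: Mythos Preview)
Your proposal is correct and follows essentially the same route as the paper: write $\tau=(I+\mc{K})^{-1}\mc{G}[\bm{g}]$, bound $\mc{G}$ using the estimates of Proposition~\ref{prop:TDP_reform} (together with the explicit $\mc{G}_0$ bound via Lemmas~\ref{lem:TDP_auxlem} and~\ref{lem:straight_Leps}), and then invoke Proposition~\ref{prop:tdp_qual} for the inverse and its Lipschitz dependence. The only cosmetic difference is that you spell out the resolvent identity for the difference of inverses, whereas the paper cites the Lipschitz bound \eqref{eq:tauh_lip} of Proposition~\ref{prop:tdp_qual} directly; these are the same computation.
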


\begin{proof}
Given the reformulation of the tension determination problem in Proposition \ref{prop:TDP_reform}, we may write 
\begin{equation}
  \tau = (I+\mc{K})^{-1}\big(\mc{G}_0+ \mc{G}_\epsilon + \mc{G}_+'\big)[\bm{g}] \,.
\end{equation}
Using Proposition \ref{prop:tdp_qual}, we may then estimate
\begin{equation}
\begin{aligned}
  \norm{\tau}_{C^{1,\alpha}}&\le c(\epsilon,\norm{\X}_{C^{3,\alpha}},c_\Gamma)\norm{(\mc{G}_0+ \mc{G}_\epsilon + \mc{G}_+')[\bm{g}]}_{C^{1,\alpha}}\\
  &\le c(\epsilon,\norm{\X}_{C^{3,\alpha}},c_\Gamma)\norm{\bm{g}}_{C^{0,\alpha}}\,,
\end{aligned}
\end{equation}
where we have used the coarsest bounds for $\bm{g}$ in Proposition \ref{prop:TDP_reform}.

Furthermore, given two nearby filaments with centerlines $\X^{(a)}$ and $\X^{(b)}$, we may bound
\begin{equation}
\begin{aligned}
  \norm{\tau^{(a)}-\tau^{(b)}}_{C^{1,\alpha}} &\le 
  \norm{ \big((I+\mc{K}^{(a)})^{-1}-(I+\mc{K}^{(b)})^{-1}\big)\big(\mc{G}_0^{(a)}+ \mc{G}_\epsilon^{(a)} + \mc{G}_+^{'(a)}\big)[\bm{g}] }_{C^{1,\alpha}}\\
  &\quad + \norm{ (I+\mc{K}^{(b)})^{-1}\big(\mc{G}_0^{(a)}-\mc{G}_0^{(b)}+ \mc{G}_\epsilon^{(a)}-\mc{G}_\epsilon^{(b)} + \mc{G}_+^{'(a)}-\mc{G}_+^{'(b)}\big)[\bm{g}] }_{C^{1,\alpha}}\\
  &\hspace{-2cm}\le c(\epsilon,\|\X^{(a)}\|_{C^{3,\alpha}},\|\X^{(b)}\|_{C^{3,\alpha}},c_\Gamma)\norm{\X^{(a)}-\X^{(b)}}_{C^{3,\alpha}}\norm{\big(\mc{G}_0^{(a)}+ \mc{G}_\epsilon^{(a)} + \mc{G}_+^{'(a)}\big)[\bm{g}] }_{C^{1,\alpha}}\\
  &\hspace{-2cm}\quad + c(\epsilon,\norm{\X^{(b)}}_{C^{3,\alpha}},c_\Gamma)\norm{\big(\mc{G}_0^{(a)}-\mc{G}_0^{(b)}+ \mc{G}_\epsilon^{(a)}-\mc{G}_\epsilon^{(b)} + \mc{G}_+^{'(a)}-\mc{G}_+^{'(b)}\big)[\bm{g}] }_{C^{1,\alpha}}\\
  &\hspace{-2cm}\le c(\epsilon,\|\X^{(a)}\|_{C^{3,\alpha}},\|\X^{(b)}\|_{C^{3,\alpha}},c_\Gamma)\norm{\X^{(a)}-\X^{(b)}}_{C^{3,\alpha}}\norm{\bm{g}}_{C^{0,\alpha}}\,,
\end{aligned}
\end{equation}
where we have used Propositions \ref{prop:TDP_reform} and \ref{prop:tdp_qual}.
\end{proof}

\subsection{Proof of Theorem \ref{thm:TDP}}\label{subsec:pf_TDP}
We may finally combine the results of sections \ref{subsec:decomp_Q}-\ref{subsec:qual_TDP} to obtain Theorem \ref{thm:TDP}.
Using Proposition \ref{prop:TDP_reform}, we write
\begin{equation}
\tau = (\mc{G}_0+\mc{G}_\epsilon+\mc{G}_+')[\bm{g}] - \mc{K}[\tau]\,.
\end{equation}
By Proposition \ref{prop:TDP_reform} and Corollary \ref{cor:TDP_solve}, for any $\alpha<\gamma<1$, we may bound 
\begin{equation}
\begin{aligned}
  \norm{\mc{K}[\tau]}_{C^{1,\gamma}} &\le c(\epsilon,\norm{\X}_{C^{3,\alpha}},c_\Gamma)\norm{\tau}_{C^{1,\alpha}} 
  \le c(\epsilon,\norm{\X}_{C^{3,\alpha}},c_\Gamma)\norm{\bm{g}}_{C^{0,\alpha}}\,.
\end{aligned}
\end{equation}
Furthermore, given two filaments with centerlines $\X^{(a)}$, $\X^{(b)}$ in $C^{3,\alpha}(\T)$, we have 
\begin{equation}
\begin{aligned}
  \norm{\mc{K}^{(a)}[\tau^{(a)}]-\mc{K}^{(b)}[\tau^{(b)}]}_{C^{1,\gamma}} &\le \norm{(\mc{K}^{(a)}-\mc{K}^{(b)})[\tau^{(b)}]}_{C^{1,\gamma}} + \norm{\mc{K}^{(a)}[\tau^{(a)}-\tau^{(b)}]}_{C^{1,\gamma}} \\
  &\le c(\epsilon,\|\X^{(a)}\|_{C^{3,\alpha}},\|\X^{(b)}\|_{C^{3,\alpha}},c_\Gamma)\norm{\X^{(a)}-\X^{(b)}}_{C^{3,\alpha}}\|\tau^{(b)}\|_{C^{1,\alpha}}\\
  &\quad + c(\epsilon,\|\X^{(a)}\|_{C^{3,\alpha}},c_\Gamma)\norm{\tau^{(a)}-\tau^{(b)}}_{C^{1,\alpha}}\\
  &\le c(\epsilon,\|\X^{(a)}\|_{C^{3,\alpha}},\|\X^{(b)}\|_{C^{3,\alpha}},c_\Gamma)\norm{\X^{(a)}-\X^{(b)}}_{C^{3,\alpha}}\|\bm{g}\|_{C^{0,\alpha}}\,.
\end{aligned}
\end{equation}

We may therefore absorb the operator $\mc{K}[\tau]$ into the more regular right hand side remainder terms. In particular, defining $\mc{G}_+[\bm{g}]= \mc{G}_+'[\bm{g}]+\mc{K}[\tau]$, we obtain Theorem \ref{thm:TDP}. 
\hfill $\square$

\subsection{Proof of Corollary \ref{cor:TDP_Xssss}}\label{subsec:cor_TDP_Xssss}
Given an inextensible curve $\X\in C^{4,\alpha}(\T)$, upon differentiating the inextensibility constraint $\abs{\X_s}^2=1$, we obtain the identities 
\begin{equation}\label{eq:X_IDs}
  \X_s\cdot\X_{ss}=0\,, \quad 
  \X_s\cdot\X_{sss}=-\abs{\X_{ss}}^2\,, \quad 
  \X_s\cdot\X_{ssss}=-3\X_{ss}\cdot\X_{sss}\,.
\end{equation}
Consider the solution to the tension determination problem \eqref{eq:TDP} with data $\bm{g}=\X_{ssss}$. Using \eqref{eq:X_IDs}, the main term $\mc{G}_0[\X_{ssss}]$ in the decomposition of Theorem \ref{thm:TDP} may be rewritten as 
\begin{equation}
  \mc{G}_0[\X_{ssss}] = -(I- \overline{\mc{L}}_\epsilon^{\rm tang}\p_{ss})^{-1}\p_s\overline{\mc{L}}_\epsilon^{\rm tang}\bigg[\X_{ss}\cdot\X_{sss}-\int_{\T}\X_{ss}\cdot\X_{sss}\,ds \bigg]\,.
\end{equation}
Using Lemma \ref{lem:TDP_auxlem}, for $\alpha<\gamma<1$, we may then estimate 
\begin{equation}
\begin{aligned}
  \norm{\mc{G}_0[\X_{ssss}]}_{C^{1,\gamma}} &\le c(\epsilon)\norm{\overline{\mc{L}}_\epsilon^{\rm tang}\bigg[\X_{ss}\cdot\X_{sss}-\int_{\T}\X_{ss}\cdot\X_{sss}\,ds \bigg]}_{C^{1,\gamma}}\\
  &\le c(\epsilon)\norm{\X_{ss}\cdot\X_{sss}}_{C^{0,\gamma}}
  \le c(\epsilon,\norm{\X}_{C^{2,\gamma}})\norm{\X}_{C^{3,\gamma}}\,.
\end{aligned}
\end{equation}

Furthermore, given two inextensible filaments with centerlines $\X^{(a)}$, $\X^{(b)}$ in $C^{4,\alpha}(\T)$, the difference between the main terms in the solutions to the corresponding tension determination problems satisfies 
\begin{equation}
\begin{aligned}
  \norm{\mc{G}_0^{(a)}[\X_{ssss}^{(a)}]-\mc{G}_0^{(b)}[\X_{ssss}^{(b)}]}_{C^{1,\gamma}} 
  &= \bigg\|(I- \overline{\mc{L}}_\epsilon^{\rm tang}\p_{ss})^{-1}\p_s\overline{\mc{L}}_\epsilon^{\rm tang}\bigg[\X_{ss}^{(a)}\cdot\X_{sss}^{(a)}-\X_{ss}^{(b)}\cdot\X_{sss}^{(b)}\\
  &\qquad -\int_{\T}\big(\X_{ss}^{(a)}\cdot\X_{sss}^{(a)}-\X_{ss}^{(b)}\cdot\X_{sss}^{(b)}\big)\,ds \bigg]\bigg\|_{C^{1,\gamma}}\\
  &\le c(\epsilon)\norm{\X_{ss}^{(a)}\cdot\X_{sss}^{(a)}-\X_{ss}^{(b)}\cdot\X_{sss}^{(b)}}_{C^{0,\gamma}}\\
  &\hspace{-2cm}\le c(\epsilon)\big(\|\X^{(a)}\|_{C^{3,\gamma}}\|\X^{(a)}-\X^{(b)}\|_{C^{2,\gamma}} + \|\X^{(b)}\|_{C^{2,\gamma}}\|\X^{(a)}-\X^{(b)}\|_{C^{3,\gamma}} \big)\,.
\end{aligned}
\end{equation}
Thus the contribution from $\mc{G}_0[\X_{ssss}]$ may be absorbed into the more regular terms $\tau_+[\X_{ssss}]$. The estimates \eqref{eq:tauXssss}, \eqref{eq:tauXssss_lip} for $\tau_\epsilon[\X_{ssss}]$ and $\tau_+[\X_{ssss}]$ follow directly from taking $\bm{g}=\X_{ssss}$ in the terms $\mc{G}_\epsilon$ and $\mc{G}_+$ from Theorem \ref{thm:TDP}.
\hfill $\square$

\section{Evolution}\label{sec:evolution}
Given the solution theory and bounds of Theorem \ref{thm:TDP} and Corollary \ref{cor:TDP_Xssss} for the tension determination problem, we may now turn to the proof of our main result, Theorem \ref{thm:main}. We begin by using the decomposition of Corollary \ref{cor:four_derivs} to rewrite the evolution \eqref{eq:evolution} as 
\begin{equation}\label{eq:X_evolution}
\begin{aligned}
\frac{\p\X}{\p t} 
&= - ({\bf I}+\mc{R}_{\rm n,\epsilon})\overline{\mc{L}}_\epsilon[\X_{ssss}] - \wt{\mc{R}}_{\rm n,+}[\X_{ssss}] + \mc{L}_\epsilon\big[(\tau\X_s)_s\big]\,,
\end{aligned}
\end{equation}
where, here and throughout this section, we have $\tau=\tau[\X_{ssss}]$. This dependence is crucial for the improvement in regularity of Corollary \ref{cor:TDP_Xssss}, but we will not denote this explicitly to avoid clutter.
Due to this decomposition, we see that the main part of the evolution \eqref{eq:X_evolution} is given by $\overline{\mc{L}}_\epsilon\p_s^4$, which, by Lemma \ref{lem:straight_Leps}, has the effect of taking 3 derivatives, making the evolution third order parabolic. 

Using Duhamel's formula, we may then write
\begin{equation}\label{eq:duhamel}
\begin{aligned}
\X(s,t) &= e^{-t\overline{\mc{L}}_\epsilon\p_s^4}\X^{\rm in}(s) \\
&\quad + \int_0^t e^{-(t-t')\overline{\mc{L}}_\epsilon\p_s^4}\bigg(-\mc{R}_{\rm n,\epsilon}\big[\overline{\mc{L}}_\epsilon[\X_{ssss}]\big] - \wt{\mc{R}}_{\rm n,+}[\X_{ssss}]+ \mc{L}_\epsilon[(\tau\X_s)_s]\bigg)\,dt'\,.
\end{aligned}
\end{equation}

Since the behavior of $\overline{\mc{L}}_\epsilon\p_s^4$ is given by an explicit Fourier multiplier, we may directly obtain the following estimates for the semigroup $e^{-t\overline{\mc{L}}_\epsilon\p_s^4}$. 
\begin{lemma}[Semigroup estimates]\label{lem:semigroup}
Let $n,m$ be nonnegative integers and $\alpha,\gamma\in(0,1)$ with $n+\alpha\ge m+\gamma$.
For $\bm{V}(s)\in C^{m,\gamma}(\T)$, we have 
\begin{equation}\label{eq:semigroup}
\norm{e^{-t\overline{\mc{L}}_\epsilon\p_s^4}\bm{V}}_{C^{n,\alpha}(\T)} 
\le c\, t^{-\frac{n+\alpha - (m +\gamma)}{3}} \norm{\bm{V}}_{C^{m,\gamma}(\T)}\,.
\end{equation}
In each case, $c$ may be taken to be an absolute constant independent of $\epsilon$.
If, in addition, $\bm{V}$ belongs to the little H\"older space $h^{m,\gamma}(\T)$, then $e^{-t\overline{\mc{L}}_\epsilon\p_s^4}\bm{V}\in C([0,+\infty),h^{m,\gamma}(\T))$.
\end{lemma}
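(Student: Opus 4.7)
My plan is to exploit the fact that $\overline{\mc L}_\epsilon\p_s^4$ is diagonalized in the Fourier basis on $\T$. Decomposing $\bm V=\sum_k \widehat{\bm V}(k)\,e^{2\pi iks}$ and splitting each coefficient into its $\be_z$ (tangential) and $\{\be_x,\be_y\}$ (normal) parts, Proposition~\ref{prop:Leps_spectrum} gives
$$e^{-t\overline{\mc L}_\epsilon\p_s^4}\bm V(s) = \sum_k M_{\epsilon,\cdot}(k,t)\,\widehat{\bm V}(k)\,e^{2\pi iks},\qquad M_{\epsilon,\cdot}(k,t):=\exp\!\big(-t\,m_{\epsilon,\cdot}(k)(2\pi k)^4\big),$$
where $m_{\epsilon,\cdot}(k)$ denotes the tangential or normal multiplier. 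The program is then: (i) prove an $\epsilon$-uniform lower bound $m_{\epsilon,\cdot}(k)(2\pi k)^4\ge c_0|k|^3$ for $|k|\ge 1$; (ii) translate this into \eqref{eq:semigroup} via a Littlewood-Paley argument; and (iii) extend to $h^{m,\gamma}$-continuity by density.

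Step (i) is where the main work lies. Setting $z:=2\pi\epsilon|k|$, each of \eqref{eq:eigsT}--\eqref{eq:eigsN} rearranges as $m_{\epsilon,\cdot}(k)=g_\cdot(z)/(2\pi\epsilon|k|)$ for a positive function $g_\cdot$ of $z$ alone, built from Bessel ratios (positivity inherited from the positive definiteness of $\overline{\mc L}_\epsilon$ via \eqref{eq:energyID}). Thus $m_{\epsilon,\cdot}(k)(2\pi k)^4 = (2\pi)^3\epsilon^{-1}|k|^3 g_\cdot(z)$. The small-$z$ expansions $K_0(z)\sim -\log z$, $K_1(z)\sim 1/z$, $K_2(z)\sim 2/z^2$ give $g_\cdot(z)\sim C_0\,z|\log z|$ as $z\to 0^+$, while the large-$z$ expansion $K_j(z)\sim \sqrt{\pi/(2z)}\,e^{-z}$ gives $g_\cdot(z)\to C_\infty>0$ as $z\to\infty$. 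A three-regime split then assembles the claim: for $z\ge 1$ the factor $g_\cdot$ is bounded below and $\epsilon^{-1}\ge 1$ suffices; for $z\le e^{-1}$ the small-$z$ asymptotic gives $\epsilon^{-1}g_\cdot(z)\gtrsim |k||\log z|\ge |k|$, so $m_{\epsilon,\cdot}(k)(2\pi k)^4\gtrsim |k|^4\ge |k|^3$; and on the compact range $z\in[e^{-1},1]$ continuity produces a positive minimum for $g_\cdot$ while $\epsilon^{-1}|k|^3\gtrsim |k|^4$. I expect the main difficulty to be ensuring uniformity in $\epsilon$, since $g_\cdot(z)\to 0$ at the origin and the lower bound survives only because of the compensating $\epsilon^{-1}$ factor.

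For step (ii), let $P_j$ denote the standard Littlewood-Paley projector onto frequencies $|k|\sim 2^j$ for $j\ge 0$ and $P_{-1}$ onto $|k|\le 1$. Since $P_j$ commutes with the Fourier multiplier $e^{-t\overline{\mc L}_\epsilon\p_s^4}$, step (i) yields
$$\|P_j e^{-t\overline{\mc L}_\epsilon\p_s^4}\bm V\|_{L^\infty}\;\le\;e^{-c_0 t\,2^{3(j-1)}}\,\|P_j\bm V\|_{L^\infty}\qquad(j\ge 1),$$
with the low-frequency block bounded trivially by $\|\bm V\|_{L^\infty}$. Invoking the standard characterization $\|\bm V\|_{C^{n,\alpha}(\T)}\sim \sup_{j\ge -1}2^{j(n+\alpha)}\|P_j\bm V\|_{L^\infty}$ for $n\in\mathbb{N}_0,\,\alpha\in(0,1)$, setting $\beta:=(n+\alpha)-(m+\gamma)\ge 0$, and using $\sup_{x>0}x^\beta e^{-c_0 t x^3}=C_\beta\,t^{-\beta/3}$ then produces \eqref{eq:semigroup} with an $\epsilon$-independent constant.

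Step (iii) is routine. For $\bm V\in C^\infty(\T)$ the orbit $t\mapsto e^{-t\overline{\mc L}_\epsilon\p_s^4}\bm V$ is smooth on $[0,\infty)$ with values in $C^{m,\gamma}(\T)$, as $\p_t$ brings down the smooth function $-\overline{\mc L}_\epsilon\p_s^4\bm V$. Combining the uniform contraction $\|e^{-t\overline{\mc L}_\epsilon\p_s^4}\bm V\|_{C^{m,\gamma}}\le C\|\bm V\|_{C^{m,\gamma}}$ from \eqref{eq:semigroup} at $n=m,\alpha=\gamma$ with density of $C^\infty(\T)$ in $h^{m,\gamma}(\T)$, a standard $3\varepsilon$-argument promotes strong continuity to all $\bm V\in h^{m,\gamma}(\T)$.
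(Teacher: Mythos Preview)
Your overall strategy matches the paper's: the bound \eqref{eq:semigroup} is obtained in \cite{ohm2024free} via Littlewood--Paley projections and the Besov characterization, and the $h^{m,\gamma}$-continuity is handled by density (the paper's Appendix~\ref{sec:h4alpha} approximates by $C^{m,\gamma+\delta}$ and splits high/low frequencies; your $C^\infty$-plus-$3\varepsilon$ variant is equivalent).

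There is, however, a genuine gap in Step~(ii). From Step~(i) you only have the \emph{pointwise} lower bound $\lambda_\cdot(\xi):=m_{\epsilon,\cdot}(\xi)(2\pi\xi)^4\ge c_0|\xi|^3$, and you then assert
\[
\|P_j e^{-t\overline{\mc L}_\epsilon\p_s^4}\bm V\|_{L^\infty}\le e^{-c_0 t\,2^{3(j-1)}}\|P_j\bm V\|_{L^\infty}.
\]
This does not follow: a Fourier multiplier bounded pointwise by $1$ need not be bounded on $L^\infty$ (the Hilbert transform is the standard counterexample). What is actually needed is an $L^1$ bound on the dyadically localized kernel $\mc F^{-1}[\wt\phi_j(\cdot)\,e^{-t\lambda_\cdot(\cdot)}]$, and this requires control of \emph{derivatives} of the symbol, not just the lower bound. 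Concretely, one needs estimates of the form
\[
\big|\p_\xi^\ell\lambda_\cdot(\xi)\big|\;\lesssim\;\lambda_\cdot(\xi)\,|\xi|^{-\ell},\qquad \ell=1,2,
\]
uniformly in $\epsilon$; these give $|\p_\xi^\ell e^{-t\lambda_\cdot(\xi)}|\lesssim |\xi|^{-\ell}e^{-c_0' t|\xi|^3}$ on each annulus, after which the argument of \eqref{eq:Linfty_Mj}--\eqref{eq:PjBound} yields $\|\mc F^{-1}[\wt\phi_j e^{-t\lambda_\cdot}]\|_{L^1}\lesssim e^{-c_0't2^{3j}}$ and hence your displayed inequality (with a possibly different absolute constant). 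The required derivative bounds do follow from the same Bessel analysis as Step~(i)---in the low-frequency regime $z<1$ one has $\lambda_\cdot(\xi)\sim|\log\epsilon|\,|\xi|^4$ with matching upper and lower constants (cf.\ \eqref{eq:mt_bds}), so the $|\log\epsilon|$ factors cancel in the ratio $|\lambda'|/\lambda$---but this step must be carried out, and it is the mechanism by which the constant in \eqref{eq:semigroup} comes out independent of~$\epsilon$.
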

The proof of the bound \eqref{eq:semigroup} appears in \cite[Lemma 2.2]{ohm2024free}. The final statement is shown in Appendix \ref{sec:h4alpha}.

Due to the presence of remainder terms $\mc{R}_{\rm n,\epsilon}$ and $\tau_\epsilon$ in \eqref{eq:duhamel} which are small in $\epsilon$ but not more regular than the leading $\overline{\mc{L}}_\epsilon\p_s^4$ term in \eqref{eq:X_evolution}, Lemma \ref{lem:semigroup} will not be sufficient to close a fixed point argument. We will additionally require a lemma making use of the maximal smoothing power of the semigroup $e^{-t\overline{\mc{L}}_\epsilon\p_s^4}$ while keeping track of its $\epsilon$-dependence. Given $T>0$ and $0\le k\in \Z$, we first define the function spaces 
\begin{equation}\label{eq:Ykspace_def}
\mc{Y}_k(T) = \big\{ \bm{V}\in C\big([0,T];h^{k,\alpha}(\T)\big)\,:\,\norm{\bm{V}}_{\mc{Y}_k}<\infty \big\}\,, \qquad \norm{\cdot}_{\mc{Y}_k}=\sup_{t\in [0,T]}\norm{\cdot}_{C^{k,\alpha}(\T)} \,,
\end{equation}
where we recall that $h^{k,\alpha}(\T)$ denotes the closure of smooth functions in $C^{k,\alpha}(\T)$.
We have the following result.
\begin{lemma}[Maximal regularity bound]\label{lem:max_reg}
Given $\bm{g}\in L^\infty(0,T; C^{1,\alpha}(\T))$, let
\begin{equation}
\bm{h}(s,t)= \int_0^t e^{-(t-t')\overline{\mc{L}}_\epsilon\p_s^4}\;\bm{g}(s,t')\,dt'\,.
\end{equation}
Then, for some absolute constant $c>0$,
\begin{equation}\label{eq:max_semi}
\norm{\bm{h}}_{L^\infty(0,T;C^{4,\alpha}(\T))} \le \big(\epsilon + T^{1/4}\abs{\log\epsilon}^{-3/4}+T \big)\,c\norm{\bm{g}}_{L^\infty(0,T;C^{1,\alpha}(\T))} \,.
\end{equation}
If, in addition, $\bm{g}\in \mc{Y}_1(T)$, then $\bm{h}\in \mc{Y}_4(T)$.
\end{lemma}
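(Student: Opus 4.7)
The strategy exploits the two-regime Fourier symbol structure of $\overline{\mc{L}}_\epsilon\p_s^4$. Writing the symbol as $\sigma(k)=m_{\epsilon,\cdot}(k)(2\pi k)^4$, the Bessel asymptotics recorded in Proposition~\ref{prop:Leps_spectrum} give $\sigma(k)\sim \abs{\log\epsilon}\abs{k}^4$ for $1\le\abs{k}\lesssim 1/\epsilon$ and $\sigma(k)\sim\abs{k}^3/\epsilon$ for $\abs{k}\gtrsim 1/\epsilon$. The three summands in \eqref{eq:max_semi} correspond, respectively, to the zeroth Fourier mode, the high-frequency tail $\abs{k}\gtrsim 1/\epsilon$, and the middle band $1\le\abs{k}\lesssim 1/\epsilon$.

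The plan is to first split $\bm{g}(s,t)=\langle\bm{g}\rangle(t)+\bm{g}_\perp(s,t)$ into its spatial mean and mean-zero components. Since $\p_s^4$ annihilates constants, $e^{-t\overline{\mc{L}}_\epsilon\p_s^4}\langle\bm{g}\rangle=\langle\bm{g}\rangle$, so the mean contribution to $\bm{h}$ is simply $\int_0^t\langle\bm{g}\rangle(t')\,dt'$, which is bounded in $C^{4,\alpha}$ by $T\|\bm{g}\|_{L^\infty_t C^{1,\alpha}}$. This immediately produces the $T$ term.

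For the mean-zero part I would dyadically decompose in frequency and handle each Littlewood-Paley block $P_N\bm{g}_\perp$ separately. On the high-frequency blocks ($N\gtrsim 1/\epsilon$), the pointwise bound $\int_0^t e^{-(t-t')\sigma(k)}\,dt'\le 1/\sigma(k)\sim \epsilon/\abs{k}^3$ turns the $\p_s^{4+\alpha}$ multiplier into $\abs{k}^{4+\alpha}/\sigma(k)\sim \epsilon\abs{k}^{1+\alpha}$; pairing this with the $C^{1,\alpha}$ norm of $\bm{g}$ on the block and summing produces the $\epsilon\,\|\bm{g}\|_{L^\infty_t C^{1,\alpha}}$ term. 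On the middle-frequency blocks, I would split the time integral at the smoothing timescale $\tau_k:=1/\sigma(k)\sim 1/(\abs{\log\epsilon}\abs{k}^4)$: for $t-t'<\tau_k$ use $1-e^{-(t-t')\sigma(k)}\le(t-t')\sigma(k)$, giving a $(t-t')\abs{k}^{4+\alpha}$ contribution that integrates to $\tau_k^{2}\sigma(k)\abs{k}^{\alpha}\sim \abs{k}^\alpha/(|\log\epsilon|\abs{k}^4)$; for $t-t'\ge\tau_k$ use the full exponential decay to obtain a contribution of $\abs{k}^\alpha/(\abs{\log\epsilon}\abs{k})\cdot(t\wedge\tau_k\,\sigma(k))$. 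Optimizing in $k$ (cutting at the scale $\abs{k}\sim(T\abs{\log\epsilon})^{-1/4}$) balances the two regimes at the quantity $T^{1/4}\abs{\log\epsilon}^{-3/4}$, yielding the middle term of \eqref{eq:max_semi}.

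The main obstacle is converting these pointwise symbol bounds into honest $C^{4,\alpha}$ estimates with explicit $\epsilon$-dependence, since the symbol is too rough for a direct Mikhlin multiplier theorem. I would bypass this either by applying the Besov characterization of $C^{4,\alpha}$ blockwise, or alternatively by invoking Lemma~\ref{lem:semigroup} at each dyadic scale with intermediate target pairs (e.g.\ $(m,\gamma)=(1,\alpha)\to(n,\alpha)=(3,\alpha)$ to secure an integrable $t^{-2/3}$ weight) and then gaining the last derivative via the direct pointwise multiplier estimate above. The $\mc{Y}_4(T)$ statement then follows from the final sentence of Lemma~\ref{lem:semigroup}: approximate $\bm{g}\in\mc{Y}_1(T)$ by spatially smooth $\bm{g}^{(n)}$, for which $\bm{h}^{(n)}\in C([0,T];h^{4,\alpha})$ by classical parabolic smoothing, and pass to the limit using the already-established $\mc{Y}_4(T)$-norm bound applied to $\bm{g}-\bm{g}^{(n)}$.
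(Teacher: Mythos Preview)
Your approach is essentially correct and matches the paper's strategy. The paper defers the bound \eqref{eq:max_semi} to \cite[Lemma 2.3]{ohm2024free}, whose proof is precisely the Besov/Littlewood--Paley decomposition you describe: split at the frequency scale $j_\epsilon\sim\abs{\log\epsilon}/\log 2$, use the symbol asymptotics $\sigma(k)\sim\abs{\log\epsilon}\abs{k}^4$ and $\sigma(k)\sim\abs{k}^3/\epsilon$ in the two regimes, and for the middle band balance $\min(t,1/\sigma(k))$ at $\abs{k}\sim(T\abs{\log\epsilon})^{-1/4}$. Your ``respectively'' sentence has the correspondences permuted (the mean gives the $T$ term, high frequencies give $\epsilon$, the middle band gives $T^{1/4}\abs{\log\epsilon}^{-3/4}$), and your middle-frequency time-splitting description is muddled---the clean route is simply $\int_0^t e^{-(t-t')\sigma(k)}\,dt' = (1-e^{-t\sigma(k)})/\sigma(k)\le\min(t,1/\sigma(k))$ and then optimize---but your balancing scale and conclusion are right.

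For the $\mc{Y}_4(T)$ statement, the paper's Appendix~\ref{sec:h4alpha} does exactly your density argument: approximate $\bm{g}\in\mc{Y}_1$ by $\bm{\varphi}\in L^\infty(0,T;C^{1,\alpha+\delta})$, show $\mc{U}[\bm{\varphi}]\in C([0,T];C^{4,\alpha})$, and close by the already-established bound on $\bm{g}-\bm{\varphi}$. The only difference is that you invoke ``classical parabolic smoothing'' for the continuity of $\mc{U}[\bm{\varphi}]$ in time, whereas the paper gives an explicit frequency-tail argument (choose $K$ so that $2^{-K\delta}\norm{\mc{U}[\bm{\varphi}]}_{B^{4+\alpha+\delta}_{\infty,\infty}}<\nu$, then use Lipschitz-in-time continuity on the finitely many low modes). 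Your route works too if you approximate by genuinely smooth data; the paper's version has the advantage of needing only the minimal $\delta$-improvement.
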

The proof of the bound \eqref{eq:max_semi} appears in \cite[Lemma 2.3]{ohm2024free}, and we show the final statement in Appendix \ref{sec:h4alpha}.

We will close a fixed point argument in the space
\begin{equation}\label{eq:fspace}
   \mc{Y}_4(T) \cap \bigg\{ \norm{\X-e^{-t\overline{\mc{L}}_\epsilon\p_s^4}\X^{\rm in}}_{\mc{Y}_1(T)}\le \frac{1}{2}\abs{\X^{\rm in}}_\star \bigg\} 
\end{equation}
for some choice of $\epsilon$, $T>0$. Note that the second condition serves to propagate the non-intersection condition \eqref{eq:star_norm}. 
In particular, we have
\begin{equation}
\begin{aligned}
  \abs{\X(\cdot,t)}_\star &\ge \abs{\X^{\rm in}}_\star - \norm{\X(\cdot,t)-\X^{\rm in}}_{C^{1,\alpha}} \\
  &\ge \abs{\X^{\rm in}}_\star - \norm{\X(\cdot,t)-e^{-t\overline{\mc{L}}_\epsilon\p_s^4}\X^{\rm in}}_{C^{1,\alpha}} - \norm{e^{-t\overline{\mc{L}}_\epsilon\p_s^4}\X^{\rm in}-\X^{\rm in}}_{C^{1,\alpha}} \,.
\end{aligned}
\end{equation}
For $\Y$ satisfying the linear equation
\begin{equation}
  \p_t\Y = -\overline{\mc{L}_\epsilon}\p_s^4\Y\,, \qquad \Y^{\rm in}= \X^{\rm in}\,,
\end{equation}
we note that at each $t$, we have 
\begin{equation}
  \norm{\p_t\Y}_{C^{1,\alpha}(\T)} \le c(\epsilon)\norm{\Y}_{C^{4,\alpha}(\T)} \le c(\epsilon)\norm{\X^{\rm in}}_{C^{4,\alpha}(\T)}\,. 
\end{equation}
We may then estimate
\begin{equation}
  \norm{\Y - \X^{\rm in}}_{C^{1,\alpha}(\T)} = \norm{\int_0^t\p_t\Y\,dt'}_{C^{1,\alpha}(\T)} \le c(\epsilon)\,t\,\norm{\X^{\rm in}}_{C^{4,\alpha}(\T)}\,.
\end{equation}
Thus we will have
\begin{equation}\label{eq:T1_def}
  \norm{e^{-t\overline{\mc{L}}_\epsilon\p_s^4}\X^{\rm in}-\X^{\rm in}}_{\mc{Y}_1} \le c(\epsilon)\,T\norm{\X^{\rm in}}_{C^{4,\alpha}(\T)} 
  \le \frac{1}{4}\abs{\X^{\rm in}}_\star
\end{equation}
for $T$ sufficiently small (depending on $\epsilon$). Therefore the second condition of \eqref{eq:fspace} guarantees that
\begin{equation}
  \sup_{t\in[0,T]} \abs{\X(\cdot,t)}_\star \ge \frac{1}{4}\abs{\X^{\rm in}}_\star\,.
\end{equation}

\subsection{Ball to ball}
Given $\X^{\rm in}\in h^{4,\alpha}(\T)$ with $\abs{\X^{\rm in}}_\star>0$, we define the map
\begin{equation}\label{eq:Lambda_def}
\begin{aligned}
  \Lambda[\X] &= e^{-t\overline{\mc{L}}_\epsilon\p_s^4}\X^{\rm in}(s) \\
&\quad + \int_0^t e^{-(t-t')\overline{\mc{L}}_\epsilon\p_s^4}\bigg(-\mc{R}_{\rm n,\epsilon}\big[\overline{\mc{L}}_\epsilon[\X_{ssss}]\big] - \wt{\mc{R}}_{\rm n,+}[\X_{ssss}]+ \mc{L}_\epsilon[(\tau\X_s)_s]\bigg)\,dt'
\end{aligned}
\end{equation}
and show that for some choice of $\epsilon$, $T>0$, $\Lambda$ maps the closed ball
\begin{equation}
  \mc{B}_{M,\star} = \bigg\{ \X\in \mc{Y}_4(T)\,: \, \norm{\X}_{\mc{Y}_4}\le M\,, \; \norm{\X-e^{-t\overline{\mc{L}}_\epsilon\p_s^4}\X^{\rm in}}_{\mc{Y}_1}\le \frac{1}{2}\abs{\X^{\rm in}}_\star \bigg\} 
\end{equation}
to itself.

We begin with the tension terms. Recalling that here $\tau=\tau[\X_{ssss}]$, we may use Corollary \ref{cor:TDP_Xssss} along with the $\mc{L}_\epsilon$ decomposition of Theorem \ref{thm:decomp} to write 
\begin{equation}\label{eq:tension_terms}
\begin{aligned}
  \mc{L}_\epsilon\big[(\tau\X_s)_s\big] &= \mc{L}_\epsilon\big[(\tau_\epsilon\X_s)_s\big] + \mc{L}_\epsilon\big[(\tau_+\X_s)_s\big] \\
  &= \underbrace{({\bf I}+\mc{R}_{\rm n,\epsilon})\left[\Phi\overline{\mc{L}}_\epsilon\big[\Phi^{-1}\big((\tau_\epsilon\X_s)_s\big)^\Phi_0\big] \right]}_{=: \,J_\epsilon[\X]} 
  + \underbrace{\mc{R}_{\rm n,+}[(\tau_\epsilon\X_s)_s]+ \mc{L}_\epsilon\big[(\tau_+\X_s)_s\big]}_{=: \,J_+[\X]}\,.
\end{aligned}
\end{equation}
Here the notation $(\cdot)_0^\Phi$ is as defined in \eqref{eq:subtract_mean}. We may obtain spatial estimates for the terms $J_\epsilon[\X]$ and $J_+[\X]$ as follows. Using Theorem \ref{thm:decomp}, we have
\begin{equation}
\begin{aligned}
  \norm{J_\epsilon[\X]}_{C^{1,\alpha}(\T)} &\le c(\norm{\X}_{C^{2,\alpha^+}},\abs{\X^{\rm in}}_\star)\norm{\overline{\mc{L}}_\epsilon\big[\Phi^{-1}\big((\tau_\epsilon\X_s)_s\big)^\Phi_0\big]}_{C^{1,\alpha}} \\
  &\le c(\norm{\X}_{C^{2,\alpha^+}},\abs{\X^{\rm in}}_\star)\,\abs{\log\epsilon}\epsilon^{-1}\norm{\big((\tau_\epsilon\X_s)_s\big)^\Phi_0}_{C^{0,\alpha}}\\
  &\le c(\norm{\X}_{C^{2,\alpha^+}},\abs{\X^{\rm in}}_\star)\,\abs{\log\epsilon}\epsilon^{-1}\norm{\tau_\epsilon}_{C^{1,\alpha}}\\
  &\le c(\norm{\X}_{C^{2,\alpha^+}},\abs{\X^{\rm in}}_\star)\,\epsilon^{-\alpha^+}\abs{\log\epsilon}^4\norm{\X}_{C^{4,\alpha}(\T)}\,,
\end{aligned}
\end{equation}
where we have also used Lemma \ref{lem:straight_Leps} in the second line and Corollary \ref{cor:TDP_Xssss} in the final line. In addition, we may bound
\begin{equation}\label{eq:Jplus}
\begin{aligned}
  \norm{J_+[\X]}_{C^{1,\gamma}(\T)} &\le
  \norm{\mc{R}_{\rm n,+}[(\tau_\epsilon\X_s)_s]}_{C^{1,\gamma}}+ \norm{\mc{L}_\epsilon\big[(\tau_+\X_s)_s\big]}_{C^{1,\gamma}}\\
  &\le c(\epsilon,\norm{\X}_{C^{3,\alpha}},\abs{\X^{\rm in}}_\star)\big(\norm{(\tau_\epsilon\X_s)_s}_{C^{0,\alpha}}
  + \norm{(\tau_+\X_s)_s}_{C^{0,\gamma}}\big)\\
  &\le c(\epsilon,\norm{\X}_{C^{3,\alpha}},\abs{\X^{\rm in}}_\star)\norm{\X}_{C^{4,\alpha}}\,,
\end{aligned}
\end{equation}
where we have used the mapping properties of $\mc{R}_{\rm n,+}$ and $\mc{L}_\epsilon$ from Theorem \ref{thm:decomp} and Lemma \ref{lem:straight_Leps} in the second line, and we have used the bounds for $\tau_\epsilon$ and $\tau_+$ from Corollary \ref{cor:TDP_Xssss} in the final line.

Using the estimate \eqref{eq:Jplus} in the Duhamel formula \eqref{eq:duhamel}, by Lemma \ref{lem:semigroup}, we have that the $J_+$ term satisfies 
\begin{equation}
\begin{aligned}
  \norm{\int_0^t e^{-(t-t')\overline{\mc{L}}_\epsilon\p_s^4}J_+[\X]\,dt'}_{C^{4,\alpha}(\T)}& \le c\int_0^t(t-t')^{-1+\frac{\gamma-\alpha}{3}}\norm{J_+[\X]}_{C^{1,\gamma}(\T)}\,dt'\\
  &\le \int_0^t(t-t')^{-1+\frac{\gamma-\alpha}{3}}c(\epsilon,\norm{\X}_{C^{3,\alpha}},\abs{\X^{\rm in}}_\star)\norm{\X}_{C^{4,\alpha}(\T)}\,dt'\\
  &\le t^{\frac{\gamma-\alpha}{3}}\,c(\epsilon,M,\abs{\X^{\rm in}}_\star)\norm{\X}_{\mc{Y}_4}
  \le t^{\frac{\gamma-\alpha}{3}}\,c(\epsilon,M,\abs{\X^{\rm in}}_\star)\,M\,.
\end{aligned}
\end{equation}
In addition, using Lemma \ref{lem:semigroup}, we may obtain the $C^{1,\alpha}(\T)$ bound
\begin{equation}
\begin{aligned}
  \norm{\int_0^t e^{-(t-t')\overline{\mc{L}}_\epsilon\p_s^4}J_+[\X]\,dt'}_{C^{1,\alpha}(\T)}& \le c\int_0^t(t-t')^{\frac{\gamma-\alpha}{3}}\norm{J_+[\X]}_{C^{1,\gamma}(\T)}\,dt'\\
  &\le \int_0^t(t-t')^{\frac{\gamma-\alpha}{3}}c(\epsilon,\norm{\X}_{C^{3,\alpha}},\abs{\X^{\rm in}}_\star)\norm{\X}_{C^{4,\alpha}(\T)}\,dt'\\
  &\le t^{1+\frac{\gamma-\alpha}{3}}\,c(\epsilon,M,\abs{\X^{\rm in}}_\star)\norm{\X}_{\mc{Y}_4}\\
  &\le t^{1+\frac{\gamma-\alpha}{3}}\,c(\epsilon,M,\abs{\X^{\rm in}}_\star)\,M\,.
\end{aligned}
\end{equation}
Furthermore, using Lemma \ref{lem:max_reg}, the term $J_\epsilon$ satisfies 
\begin{equation}
\begin{aligned}
  \norm{\int_0^t e^{-(t-t')\overline{\mc{L}}_\epsilon\p_s^4}J_\epsilon[\X]\,dt'}_{\mc{Y}_4} &\le (\epsilon + T^{1/4}\abs{\log\epsilon}^{-3/4}+T)\,c\norm{J_\epsilon[\X]}_{\mc{Y}_1}\\
  &\le (\epsilon + T^{1/4}\abs{\log\epsilon}^{-3/4}+T)\,c(M,\abs{\X^{\rm in}}_\star)\,\epsilon^{-\alpha^+}\abs{\log\epsilon}^4\norm{\X}_{\mc{Y}_4}\\
  &\le \epsilon^{1-\alpha^+}\abs{\log\epsilon}^4c(M,\abs{\X^{\rm in}}_\star)\,M\\
  &\qquad 
  + (T^{1/4}+T)\,c(\epsilon,M,\abs{\X^{\rm in}}_\star)\,M\,.
\end{aligned}
\end{equation}
Using Lemma \ref{lem:semigroup}, we additionally obtain the $C^{1,\alpha}(\T)$ bound
\begin{equation}
\begin{aligned}
  \norm{\int_0^t e^{-(t-t')\overline{\mc{L}}_\epsilon\p_s^4}J_\epsilon[\X]\,dt'}_{C^{1,\alpha}(\T)}& \le c\int_0^t\norm{J_\epsilon[\X]}_{C^{1,\alpha}(\T)}\,dt'
  \le t\,c(\epsilon,M,\abs{\X^{\rm in}}_\star)\,M\,.
\end{aligned}
\end{equation}

We next turn to the curve remainder terms $\wt{\mc{R}}_{\rm n,+}$ and $\mc{R}_{\rm n,\epsilon}$. Using Corollary \ref{cor:four_derivs} and Lemma \ref{lem:semigroup}, we may estimate $\wt{\mc{R}}_{\rm n,+}$ as
\begin{equation}
\begin{aligned}
  \norm{\int_0^t e^{-(t-t')\overline{\mc{L}}_\epsilon\p_s^4}\wt{\mc{R}}_{\rm n,+}[\X_{ssss}]\,dt'}_{C^{4,\alpha}(\T)}
  &\le c\int_0^t (t-t')^{-1+\frac{\gamma-\alpha}{3}}\norm{\wt{\mc{R}}_{\rm n,+}[\X_{ssss}]}_{C^{1,\gamma}(\T)}\,dt' \\
  &\hspace{-1cm}\le \int_0^t (t-t')^{-1+\frac{\gamma-\alpha}{3}}c(\epsilon,\norm{\X}_{C^{3,\gamma}},\abs{\X^{\rm in}}_\star)\norm{\X}_{C^{4,\alpha}(\T)}\,dt' \\
  &\hspace{-1cm}\le t^{\frac{\gamma-\alpha}{3}}\,c(\epsilon,M,\abs{\X^{\rm in}}_\star)\norm{\X}_{\mc{Y}_4}
  \le t^{\frac{\gamma-\alpha}{3}}\,c(\epsilon,M,\abs{\X^{\rm in}}_\star)\,M\,.
\end{aligned}
\end{equation}
In addition, by Corollary \ref{cor:four_derivs} and Lemma \ref{lem:max_reg}, the small-in-$\epsilon$ remainder $\mc{R}_{\rm n,\epsilon}$ satisfies 
\begin{equation}
\begin{aligned}
  &\norm{\int_0^t e^{-(t-t')\overline{\mc{L}}_\epsilon\p_s^4}\mc{R}_{\rm n,\epsilon}\big[\overline{\mc{L}}_\epsilon[\X_{ssss}]\big]\,dt'}_{\mc{Y}_4}
  \le (\epsilon+T^{1/4}\abs{\log\epsilon}^{-3/4}+T) \,c\norm{\mc{R}_{\rm n,\epsilon}\big[\overline{\mc{L}}_\epsilon[\X_{ssss}]\big]}_{\mc{Y}_1} \\
  &\qquad \le \epsilon^{1-\alpha^+}\abs{\log\epsilon}(\epsilon+T^{1/4}\abs{\log\epsilon}^{-3/4}+T)\,c(\norm{\X}_{\mc{Y}_4},\abs{\X^{\rm in}}_\star)\norm{\overline{\mc{L}}_\epsilon[\X_{ssss}]}_{\mc{Y}_1}\\
  &\qquad \le \epsilon^{1-\alpha^+}\abs{\log\epsilon}(\epsilon+T^{1/4}\abs{\log\epsilon}^{-3/4}+T)\,c(M,\abs{\X^{\rm in}}_\star)\big(\epsilon^{-1}\abs{\log\epsilon}\norm{\X}_{\mc{Y}_4} \big)\\
  &\qquad \le \epsilon^{1-\alpha^+}\abs{\log\epsilon}^2\,c(M,\abs{\X^{\rm in}}_\star)\,M +(T^{1/4}+T)\,c(\epsilon,M,\abs{\X^{\rm in}}_\star)\,M\,.
\end{aligned}
\end{equation}
For both remainders, we may obtain $C^{1,\alpha}(\T)$ bounds using Lemma \ref{lem:semigroup}:
\begin{equation}
\begin{aligned}
  \norm{\int_0^t e^{-(t-t')\overline{\mc{L}}_\epsilon\p_s^4}\wt{\mc{R}}_{\rm n,+}[\X_{ssss}]\,dt'}_{C^{1,\alpha}(\T)}
  &\le c\int_0^t (t-t')^{\frac{\gamma-\alpha}{3}}\norm{\wt{\mc{R}}_{\rm n,+}[\X_{ssss}]}_{C^{1,\gamma}(\T)}\,dt' \\
  &\le t^{1+\frac{\gamma-\alpha}{3}}\,c(\epsilon,M,\abs{\X^{\rm in}}_\star)\,M\,,\\
  \norm{\int_0^t e^{-(t-t')\overline{\mc{L}}_\epsilon\p_s^4}\mc{R}_{\rm n,\epsilon}[\X_{ssss}]\,dt'}_{C^{1,\alpha}(\T)}
  &\le c\int_0^t \norm{\mc{R}_{\rm n,\epsilon}[\X_{ssss}]}_{C^{1,\alpha}(\T)}\,dt' \\
  &\le t\,c(\epsilon,M,\abs{\X^{\rm in}}_\star)\,M\,.
\end{aligned}
\end{equation}

Finally, we use Lemma \ref{lem:semigroup} to estimate the contribution from $\X^{\rm in}$ in \eqref{eq:Lambda_def} as
\begin{equation}\label{eq:c_def}
  \norm{e^{-t\overline{\mc{L}}_\epsilon\p_s^4}\X^{\rm in}}_{\mc{Y}_4} \le c\norm{\X^{\rm in}}_{C^{4,\alpha}(\T)}\,, 
\end{equation}
where $c$ is an absolute constant. We will take $M=3c\norm{\X^{\rm in}}_{C^{4,\alpha}(\T)}$. 

Then, in total, we obtain the bounds 
\begin{equation}
\begin{aligned}
  &\norm{\Lambda[\X]}_{\mc{Y}_4} \le \bigg(\frac{1}{3}+ \epsilon^{1-\alpha^+}\abs{\log\epsilon}^4c(M,\abs{\X^{\rm in}}_\star)
  + (T^{\frac{\gamma-\alpha}{3}}+ T^{1/4}+T)\,c(\epsilon,M,\abs{\X^{\rm in}}_\star) \bigg)M\,,\\
  &\norm{\Lambda[\X]- e^{-t\overline{\mc{L}}_\epsilon\p_s^4}\X^{\rm in}}_{\mc{Y}_1} \le T(1+T^{\frac{\gamma-\alpha}{3}})\,c(\epsilon,M,\abs{\X^{\rm in}}_\star)\,M\,.
\end{aligned}  
\end{equation}
We first take $\epsilon$ small enough that both
\begin{equation}\label{eq:epscond1}
\begin{aligned}
  &\epsilon^{1-\alpha^+}\abs{\log\epsilon}^4c(M,\abs{\X^{\rm in}}_\star)\le \frac{1}{3}
  \quad \text{and} \\
  &\epsilon\le \min\bigg\{\frac{1}{2}\,\textstyle r_\star\big(\frac{|\X^{\rm in}|_\star}{4},M\big), \;\epsilon_{\rm n}\big(\frac{|\X^{\rm in}|_\star}{4},M\big),\;\epsilon_{\rm t}\big(\frac{|\X^{\rm in}|_\star}{4},M\big)\bigg\}
\end{aligned} 
\end{equation}
hold, where $r_\star$ is as in \eqref{eq:rstar}, $\epsilon_{\rm n}$ is as in Theorem \ref{thm:decomp}, and $\epsilon_{\rm t}$ is as in Theorem \ref{thm:TDP}. For this choice of $\epsilon$, we then take $T$ small enough that
\begin{equation}\label{eq:Tcond1}
  (T^{\frac{\gamma-\alpha}{3}}+ T^{1/4}+T)\,c(\epsilon,M,\abs{\X^{\rm in}}_\star)\le \frac{1}{3}\,, \qquad 
  T(1+T^{\frac{\gamma-\alpha}{3}})\,c(\epsilon,M,\abs{\X^{\rm in}}_\star)\,M \le \frac{1}{2}\abs{\X^{\rm in}}_\star\,,
\end{equation}
and, recalling \eqref{eq:T1_def},
\begin{equation}\label{eq:Tcond2}
  c(\epsilon)\,T\,M \le \frac{1}{4}\abs{\X^{\rm in}}_\star\,.
\end{equation}
We thus obtain that $\Lambda$ maps $\mc{B}_{M,\star}$ to itself.

\subsection{Contraction estimates}
We next show that for some choice of $\epsilon$ and $T$, the map $\Lambda$ is a contraction on $\mc{B}_{M,\star}$. Let $\X^{(a)}$, $\X^{(b)}$ denote the centerlines of two nearby filaments both belonging to $\mc{B}_{M,\star}$. We aim to bound $\Lambda[\X^{(a)}]-\Lambda[\X^{(b)}]$. 

We again begin with the tension terms, which we again split using the decomposition \eqref{eq:tension_terms}. As before, using the superscript $(\cdot)^{(a)}$ to denote terms corresponding to filament $(a)$ and $(\cdot)^{(b)}$ for filament $(b)$, we may bound the difference $J_\epsilon^{(a)}[\X^{(a)}]-J_\epsilon^{(b)}[\X^{(b)}]$ as
\begin{equation}\label{eq:Jeps_lip}
\begin{aligned}
  &\norm{J_\epsilon^{(a)}[\X^{(a)}]-J_\epsilon^{(b)}[\X^{(b)}]}_{C^{1,\alpha}(\T)} 
  \le \norm{(\mc{R}_{\rm n,\epsilon}^{(a)}-\mc{R}_{\rm n,\epsilon}^{(b)})\big[\Phi^{(a)}\overline{\mc{L}}_\epsilon[(\Phi^{(a)})^{-1}\big((\tau_\epsilon^{(a)}\X_s^{(a)})_s \big)_0^{\Phi^{(a)}}] \big]}_{C^{1,\alpha}}\\
  &\quad + c(\|\X^{(b)}\|_{C^{2,\alpha^+}},\abs{\X^{\rm in}}_\star)\norm{(\Phi^{(a)}-\Phi^{(b)})\overline{\mc{L}}_\epsilon\big[(\Phi^{(a)})^{-1}\big((\tau_\epsilon^{(a)}\X_s^{(a)})_s \big)_0^{\Phi^{(a)}}\big]}_{C^{1,\alpha}}\\
  &\quad + c(\|\X^{(b)}\|_{C^{2,\alpha^+}},\abs{\X^{\rm in}}_\star)\norm{\overline{\mc{L}}_\epsilon\big[(\Phi^{(a)})^{-1}\big((\tau_\epsilon^{(a)}\X_s^{(a)})_s \big)_0^{\Phi^{(a)}}- (\Phi^{(b)})^{-1}\big((\tau_\epsilon^{(b)}\X_s^{(b)})_s \big)_0^{\Phi^{(b)}}\big]}_{C^{1,\alpha}}\\
  &\le c(\|\X^{(a)}\|_{C^{2,\alpha^+}},\|\X^{(b)}\|_{C^{2,\alpha^+}},\abs{\X^{\rm in}}_\star)\times\\
  &\quad \times \bigg(\norm{\X^{(a)}-\X^{(b)}}_{C^{2,\alpha^+}}\norm{\overline{\mc{L}}_\epsilon\big[(\Phi^{(a)})^{-1}\big((\tau_\epsilon^{(a)}\X_s^{(a)})_s \big)_0^{\Phi^{(a)}}\big]}_{C^{1,\alpha}}\\
  &\quad + \epsilon^{-1}\abs{\log\epsilon}\norm{(\Phi^{(a)})^{-1}\big((\tau_\epsilon^{(a)}\X_s^{(a)})_s \big)_0^{\Phi^{(a)}}- (\Phi^{(b)})^{-1}\big((\tau_\epsilon^{(b)}\X_s^{(b)})_s \big)_0^{\Phi^{(b)}}}_{C^{0,\alpha}}\bigg) \\
  & \le c(\|\X^{(a)}\|_{C^{2,\alpha^+}},\|\X^{(b)}\|_{C^{2,\alpha^+}},\abs{\X^{\rm in}}_\star)\,\epsilon^{-1}\abs{\log\epsilon}\bigg(\norm{\X^{(a)}-\X^{(b)}}_{C^{2,\alpha^+}}\norm{(\tau_\epsilon^{(a)}\X_s^{(a)})_s }_{C^{0,\alpha}}\\
  &\quad + \norm{(\tau_\epsilon^{(a)}\X_s^{(a)})_s -(\tau_\epsilon^{(b)}\X_s^{(b)})_s }_{C^{0,\alpha}}\bigg)\\
  %
  %
  & \le c(\|\X^{(a)}\|_{C^{4,\alpha}},\|\X^{(b)}\|_{C^{2,\alpha^+}},\abs{\X^{\rm in}}_\star)\,\epsilon^{-\alpha^+}\abs{\log\epsilon}^4\norm{\X^{(a)}-\X^{(b)}}_{C^{4,\alpha}}\,.
\end{aligned}
\end{equation}
Here we have used the Lipschitz bounds of Theorem \ref{thm:decomp} and Corollary \ref{cor:TDP_Xssss} as well as the mapping properties of Lemma \ref{lem:straight_Leps}.

Next, we have that for $\alpha<\gamma<1$, the more regular tension remainder terms $J_+$ satisfy  
\begin{equation}\label{eq:Jplus_lip}
\begin{aligned}
  &\norm{J_+^{(a)}[\X^{(a)}]-J_+^{(b)}[\X^{(b)}]}_{C^{1,\gamma}(\T)} 
  \le \norm{\mc{R}_{\rm n,+}^{(a)}[(\tau_\epsilon^{(a)}\X_s^{(a)})_s] - \mc{R}_{\rm n,+}^{(b)}[(\tau_\epsilon^{(b)}\X_s^{(b)})_s]}_{C^{1,\gamma}}\\
  &\hspace{6cm} + \norm{\mc{L}_\epsilon^{(a)}[(\tau_+^{(a)}\X_s^{(a)})_s] - \mc{L}_\epsilon^{(b)}[(\tau_+^{(b)}\X_s^{(b)})_s]}_{C^{1,\gamma}}\\
  %
  %
    %
  %
  &\quad \le c(\epsilon,\|\X^{(a)}\|_{C^{3,\alpha}},\|\X^{(b)}\|_{C^{3,\alpha}},\abs{\X^{\rm in}}_\star)\bigg(\norm{\X^{(a)}-\X^{(b)}}_{C^{2,\alpha^+}}\norm{(\tau_\epsilon^{(a)}\X_s^{(a)})_s}_{C^{0,\alpha}} \\
  &\hspace{3cm}+ \norm{(\tau_\epsilon^{(a)}\X_s^{(a)})_s-(\tau_\epsilon^{(b)}\X_s^{(b)})_s}_{C^{0,\alpha}}\bigg)\\
  &\qquad + c(\epsilon,\|\X^{(a)}\|_{C^{3,\gamma}},\|\X^{(b)}\|_{C^{3,\gamma}},\abs{\X^{\rm in}}_\star)\bigg(\norm{\X^{(a)}-\X^{(b)}}_{C^{2,\gamma^+}}\norm{(\tau_+^{(a)}\X_s^{(a)})_s}_{C^{0,\gamma}} \\
  &\hspace{3cm}+ \norm{(\tau_+^{(a)}\X_s^{(a)})_s-(\tau_+^{(b)}\X_s^{(b)})_s}_{C^{0,\gamma}}\bigg)\\
  &\quad \le c(\epsilon,\|\X^{(a)}\|_{C^{3,\gamma}},\|\X^{(b)}\|_{C^{3,\gamma}},\abs{\X^{\rm in}}_\star)\norm{\X^{(a)}-\X^{(b)}}_{C^{4,\alpha}}\,.
\end{aligned}
\end{equation}
Here we have used the Lipschitz bounds of Theorem \ref{thm:decomp} for $\mc{R}_{\rm n,+}$ and $\mc{L}_\epsilon$, along with Corollary \ref{cor:TDP_Xssss} for $\tau_\epsilon[\X_{ssss}]$ and $\tau_+[\X_{ssss}]$.

Using \eqref{eq:Jeps_lip} in the Duhamel formula \eqref{eq:duhamel}, by Lemma \ref{lem:max_reg} we obtain the bound
\begin{equation}
\begin{aligned}
  &\norm{\int_0^t e^{-(t-t')\overline{\mc{L}}_\epsilon\p_s^4}\big(J_\epsilon^{(a)}[\X^{(a)}] - J_\epsilon^{(b)}[\X^{(b)}]\big)\,dt'}_{\mc{Y}_4} \\
  &\quad \le (\epsilon + T^{1/4}\abs{\log\epsilon}^{-3/4}+T)\,c\norm{J_\epsilon^{(a)}[\X^{(a)}]- J_\epsilon^{(b)}[\X^{(b)}]}_{\mc{Y}_1}\\
  %
  %
  &\quad \le \epsilon^{1-\alpha^+}\abs{\log\epsilon}^4c(M,\abs{\X^{\rm in}}_\star)\norm{\X^{(a)}-\X^{(b)}}_{\mc{Y}_4}\\
  &\qquad
  + (T^{1/4}+T)\,c(\epsilon,M,\abs{\X^{\rm in}}_\star)\norm{\X^{(a)}-\X^{(b)}}_{\mc{Y}_4}\,.
\end{aligned}
\end{equation}
Moreover, using \eqref{eq:Jplus_lip} and Lemma \ref{lem:semigroup}, we have 
\begin{equation}
\begin{aligned}
  &\norm{\int_0^t e^{-(t-t')\overline{\mc{L}}_\epsilon\p_s^4}\big(J_+^{(a)}[\X^{(a)}]-J_+^{(b)}[\X^{(b)}]\big)\,dt'}_{C^{4,\alpha}(\T)}\\
  &\qquad \le c\int_0^t(t-t')^{-1+\frac{\gamma-\alpha}{3}}\norm{J_+^{(a)}[\X^{(a)}]-J_+^{(b)}[\X^{(b)}]}_{C^{1,\gamma}(\T)}\,dt'\\
  &\qquad \le c(\epsilon,M,\abs{\X^{\rm in}}_\star)\int_0^t(t-t')^{-1+\frac{\gamma-\alpha}{3}}\norm{\X^{(a)}-\X^{(b)}}_{C^{4,\alpha}(\T)}\,dt'\\
  &\qquad \le t^{\frac{\gamma-\alpha}{3}}\,c(\epsilon,M,\abs{\X^{\rm in}}_\star)\norm{\X^{(a)}-\X^{(b)}}_{\mc{Y}_4}\,.
\end{aligned}
\end{equation}
Next, applying Lemma \ref{lem:semigroup} and Corollary \ref{cor:four_derivs}, we may bound the difference in remainder terms $\wt{\mc{R}}_{\rm n,+}$ as 
\begin{equation}
\begin{aligned}
  &\norm{\int_0^t e^{-(t-t')\overline{\mc{L}}_\epsilon\p_s^4}\big(\wt{\mc{R}}_{\rm n,+}^{(a)}[\X_{ssss}^{(a)}]- \wt{\mc{R}}_{\rm n,+}^{(b)}[\X_{ssss}^{(b)}]\big)\,dt'}_{C^{4,\alpha}(\T)} \\
  &\quad \le c\int_0^t (t-t')^{-1+\frac{\gamma-\alpha}{3}}\bigg(\norm{(\wt{\mc{R}}_{\rm n,+}^{(a)}-\wt{\mc{R}}_{\rm n,+}^{(b)})[\X_{ssss}^{(a)}]}_{C^{1,\gamma}} + \norm{\wt{\mc{R}}_{\rm n,+}^{(b)}[\X_{ssss}^{(a)}-\X_{ssss}^{(b)}]}_{C^{1,\gamma}}\bigg)\,dt' \\
  &\quad \le c(\epsilon,M,\abs{\X^{\rm in}}_\star)\int_0^t (t-t')^{-1+\frac{\gamma-\alpha}{3}}\norm{\X^{(a)}-\X^{(b)}}_{C^{4,\alpha}(\T)}\,dt' \\
  &\quad \le t^{\frac{\gamma-\alpha}{3}}\,c(\epsilon,M,\abs{\X^{\rm in}}_\star)\norm{\X^{(a)}-\X^{(b)}}_{\mc{Y}_4} \,.
\end{aligned}
\end{equation}
Finally, by Lemma \ref{lem:max_reg} and Corollary \ref{cor:four_derivs}, the difference in remainder terms $\mc{R}_{\rm n,\epsilon}$ satisfies 
\begin{equation}
\begin{aligned}
  &\norm{\int_0^t e^{-(t-t')\overline{\mc{L}}_\epsilon\p_s^4}\big(\mc{R}_{\rm n,\epsilon}^{(a)}\big[\overline{\mc{L}}_\epsilon[\X_{ssss}^{(a)}]\big]-\mc{R}_{\rm n,\epsilon}^{(b)}\big[\overline{\mc{L}}_\epsilon[\X_{ssss}^{(b)}]\big]\big)\,dt'}_{\mc{Y}_4}\\
  &\quad \le (\epsilon+T^{1/4}\abs{\log\epsilon}^{-3/4}+T)\,c\bigg(
  \norm{(\mc{R}_{\rm n,\epsilon}^{(a)}-\mc{R}_{\rm n,\epsilon}^{(b)})\big[\overline{\mc{L}}_\epsilon[\X_{ssss}^{(a)}]\big]}_{\mc{Y}_1}\\
  &\hspace{6cm}  + 
  \norm{\mc{R}_{\rm n,\epsilon}^{(b)}\big[\overline{\mc{L}}_\epsilon[\X_{ssss}^{(a)}-\X_{ssss}^{(b)}]\big]}_{\mc{Y}_1}\bigg) \\
  &\quad \le \epsilon^{1-\alpha^+}\abs{\log\epsilon}(\epsilon+T^{1/4}\abs{\log\epsilon}^{-3/4}+T)\,c(M,\abs{\X^{\rm in}}_\star)\times\\
  &\qquad \times\bigg(\norm{\X^{(a)}-\X^{(b)}}_{\mc{Y}_4} \norm{\overline{\mc{L}}_\epsilon[\X_{ssss}^{(a)}]}_{\mc{Y}_1}
  + \norm{\overline{\mc{L}}_\epsilon[\X_{ssss}^{(a)}-\X_{ssss}^{(b)}]}_{\mc{Y}_1}\bigg)\\
  &\quad \le \epsilon^{1-\alpha^+}\abs{\log\epsilon}(\epsilon+T^{1/4}\abs{\log\epsilon}^{-3/4}+T)\,c(M,\abs{\X^{\rm in}}_\star)\big(\epsilon^{-1}\abs{\log\epsilon}\norm{\X^{(a)}-\X^{(b)}}_{\mc{Y}_4}\big)\\
  &\quad \le \epsilon^{1-\alpha^+}\abs{\log\epsilon}^2\,c(M,\abs{\X^{\rm in}}_\star)\norm{\X^{(a)}-\X^{(b)}}_{\mc{Y}_4}\\
  &\qquad +(T^{1/4}+T)\,c(\epsilon,M,\abs{\X^{\rm in}}_\star)\norm{\X^{(a)}-\X^{(b)}}_{\mc{Y}_4}\,.
\end{aligned}
\end{equation}

Returning to the expression \eqref{eq:Lambda_def} for the map $\Lambda$, in total, we may estimate the difference $\Lambda[\X^{(a)}]-\Lambda[\X^{(b)}]$ as
\begin{equation}
\begin{aligned}
  &\norm{\Lambda[\X^{(a)}]-\Lambda[\X^{(b)}]}_{\mc{Y}_4}\\
  &\quad\le \bigg((T^{\frac{\gamma-\alpha}{3}} + T^{1/4}+T)\,c(\epsilon,M,\abs{\X^{\rm in}}_\star) + \epsilon^{1-\alpha^+}\abs{\log\epsilon}^4\,c(M,\abs{\X^{\rm in}}_\star) \bigg)\norm{\X^{(a)}-\X^{(b)}}_{\mc{Y}_4}\,.
\end{aligned}
\end{equation}
We first take $\epsilon$ small enough that 
\begin{equation}\label{eq:epscond2}
\epsilon^{1-\alpha^+}\abs{\log\epsilon}^4\,c(M,\abs{\X^{\rm in}}_\star)\le \frac{1}{3}
\end{equation}
and \eqref{eq:epscond1} hold.
We then take $T$ such that $(T^{\frac{\gamma-\alpha}{3}} + T^{1/4}+T)\,c(\epsilon,M,\abs{\X^{\rm in}}_\star)\le \frac{1}{3}$ and \eqref{eq:Tcond1}, \eqref{eq:Tcond2} hold. We thus obtain that the map $\Lambda$ admits a unique fixed point on $\mc{B}_{M,\star}$, which establishes Theorem \ref{thm:main}.



\appendix
\section{Estimates for the map $\Phi$}\label{sec:Phi_comm}
Here we record useful estimates for the map $\Phi$ identifying the tangential/normal directions along a curved filament with the tangential/normal directions along the straight filament $\mc{C}_\epsilon$. The definition of the map $\Phi$ \eqref{eq:mapPhi_def} involves specifying an orthonormal frame about the filament centerline, which we may define as $(\be_{\rm t}(s),\be_{\rm n_1}(s),\be_{\rm n_2}(s))$ satisfying $\be_{\rm t}(s)=\X_s$ along with the ODEs
\begin{equation}\label{eq:frame}
\frac{d}{ds}
\begin{pmatrix}
\be_{\rm t}(s)\\
\be_{\rm n_1}(s)\\
\be_{\rm n_2}(s)
\end{pmatrix}
 = \begin{pmatrix}
 0 & \kappa_1(s) & \kappa_2(s) \\
 -\kappa_1(s) & 0 & \kappa_3 \\
-\kappa_2(s) & -\kappa_3& 0
 \end{pmatrix}\begin{pmatrix}
\be_{\rm t}(s)\\
\be_{\rm n_1}(s)\\
\be_{\rm n_2}(s)
\end{pmatrix}\,, \quad 
\kappa_1^2+\kappa_2^2=\abs{\X_{ss}}^2\,.
\end{equation}
Here, by \cite[Lemma 1.1]{closed_loop}, $\kappa_3$ may be taken to be a constant satisfying $\abs{\kappa_3}\le \pi$.

Writing $\kappa_1=\X_{ss}\cdot\be_{\rm n_1}$ and $\kappa_2=\X_{ss}\cdot\be_{\rm n_2}$ and using \eqref{eq:frame}, we see that we may bound $\norm{\kappa_1}_{C^{\ell,\alpha}}\,,\,\norm{\kappa_2}_{C^{\ell,\alpha}}\le c\norm{\X}_{C^{2+\ell,\alpha}}$ for integers $\ell \ge 0$.
Immediately by the definitions \eqref{eq:mapPhi_def} and \eqref{eq:frame}, we may obtain the following single-filament commutator estimates: 
\begin{equation}\label{eq:Phi_commutator_est}
\begin{aligned}
\norm{[\p_s,\Phi]\,\bm{g}}_{L^\infty} &\le c(\norm{\X}_{C^2})\norm{\bm{g}}_{L^\infty}\,, \qquad 
\norm{[\p_s,\Phi]\,\bm{g}}_{C^{\ell,\alpha}} \le c(\norm{\X}_{C^{2+\ell,\alpha}})\norm{\bm{g}}_{C^{\ell,\alpha}} \\
\norm{[\p_s,\Phi^{-1}]\,\bm{h}}_{L^\infty}&\le c(\norm{\X}_{C^2})\norm{\bm{h}}_{L^\infty}\,, \qquad 
\norm{[\p_s,\Phi^{-1}]\,\bm{h}}_{C^{\ell,\alpha}} \le c(\norm{\X}_{C^{2+\ell,\alpha}})\norm{\bm{h}}_{C^{\ell,\alpha}}\,.
\end{aligned}
\end{equation}
However, due to the frame dependence of $\Phi$, Lipschitz bounds between pairs of curves are more subtle. In particular, to compare two nearby curves, we need a systematic way of choosing frames that are also ``nearby" in a quantitative sense. 
Consider two filaments $\Sigma_\epsilon^{(a)}$ and $\Sigma_\epsilon^{(b)}$ as in \eqref{eq:SigmaEps}-\eqref{eq:rstar} with centerlines parameterized by $\X^{(a)}(s)$ and $\X^{(b)}(s)$, respectively. Following \cite{ohm2024free}, let $\X^{(a)}(0)$, $\X^{(b)}(0)$ denote the nearest points on the curves, i.e.
\begin{equation}
\min_{s_1,s_2\in \T} \abs{\X^{(a)}(s_1)-\X^{(b)}(s_2)}\,.
\end{equation}
Note that this pair of points need not be unique, in which case the pair may be selected arbitrarily among all such pairs. These will serve as the base points for the parameterizations $\X^{(a)}(s)$ and $\X^{(b)}(s)$. We then consider pairs of curves that are ``close" in the sense that  
\begin{equation}\label{eq:XaXb_close}
\norm{\X^{(a)}-\X^{(b)}}_{C^{2,\alpha}(\T)} =\delta
\end{equation}
for some $\delta\ge 0$ sufficiently small. In \cite[Lemma 1.1]{ohm2024free}, we show that, given two such nearby curves, there exists a systematic choice of nearby orthonormal frames such that the following holds. 
\begin{lemma}[Nearby parameterizations for nearby curves]\label{lem:XaXb_C2beta}
Given $\Sigma_\epsilon^{(a)}$ and $\Sigma_\epsilon^{(b)}$ as in \eqref{eq:SigmaEps}-\eqref{eq:rstar} with centerlines $\X^{(a)}(s)$, $\X^{(b)}(s)$ satisfying \eqref{eq:XaXb_close}, there exists a choice of orthonormal frames $(\be_{\rm t}^{(a)},\be_{\rm n_1}^{(a)},\be_{\rm n_2}^{(a)})$ and $(\be_{\rm t}^{(b)},\be_{\rm n_1}^{(b)},\be_{\rm n_2}^{(b)})$, respectively, satisfying \eqref{eq:frame} such that the frame coefficients satisfy
\begin{equation}\label{eq:XaXb_C2beta}
\sum_{\ell=1}^3\norm{\kappa_\ell^{(a)}-\kappa_\ell^{(b)}}_{C^{0,\alpha}(\T)} \le c(\|\X^{(a)}\|_{C^{2,\alpha}},\|\X^{(b)}\|_{C^{2,\alpha}})\,\norm{\X^{(a)}-\X^{(b)}}_{C^{2,\alpha}(\T)}\,.
\end{equation}
\end{lemma}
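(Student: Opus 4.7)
The plan is to construct the two frames by prescribing nearby initial data at $s=0$, solving the ODE system \eqref{eq:frame} with $\kappa_j(s) = \X_{ss}(s)\cdot\be_{\rm n_j}(s)$ for $j=1,2$, and choosing $\kappa_3$ as the unique constant in $(-\pi,\pi]$ that makes the resulting frame close up after one loop; the estimate \eqref{eq:XaXb_C2beta} then follows from a Grönwall-type argument. For the initial data, at $s=0$ I would pick any unit vector $\be_{\rm n_1}^{(a)}(0)$ perpendicular to $\X^{(a)}_s(0)$ and set $\be_{\rm n_2}^{(a)}(0) = \X^{(a)}_s(0)\times\be_{\rm n_1}^{(a)}(0)$. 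For curve $(b)$, I would then project $\be_{\rm n_1}^{(a)}(0)$ onto the plane perpendicular to $\X^{(b)}_s(0)$ and normalize; since $|\X^{(a)}_s(0)-\X^{(b)}_s(0)|\le\norm{\X^{(a)}-\X^{(b)}}_{C^1}$, this is well-defined when $\delta$ is small and yields $|\be_{\rm n_j}^{(a)}(0)-\be_{\rm n_j}^{(b)}(0)|\le c\,\delta$ for $j=1,2$.

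For each curve I would then solve \eqref{eq:frame} with a yet-to-be-determined constant $\kappa_3$; antisymmetry of the coefficient matrix automatically preserves orthonormality. Solving first with $\kappa_3 = 0$ yields a relatively parallel (Bishop) frame whose monodromy at $s=1$ is a rotation of the normal plane by some holonomy angle $\Theta(\X)\in(-\pi,\pi]$, and I would then set $\kappa_3 := -\Theta(\X)$, so that $|\kappa_3|\le\pi$ as required and the full frame becomes periodic. The map $\X\mapsto\Theta(\X)$ can be written as an integral functional of $\X_{ss}$ against the Bishop frame along $\X$, and standard ODE stability applied to \eqref{eq:frame} with $\kappa_3=0$ shows that nearby $C^2$ curves produce uniformly-nearby Bishop frames; hence
\[
|\kappa_3^{(a)}-\kappa_3^{(b)}| = |\Theta(\X^{(a)})-\Theta(\X^{(b)})| \le c(\|\X^{(a)}\|_{C^2},\|\X^{(b)}\|_{C^2})\,\delta,
\]
which gives \eqref{eq:XaXb_C2beta} for $\ell=3$.

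Subtracting \eqref{eq:frame} for curves $(a)$ and $(b)$ produces a linear ODE for the frame differences $(\delta\be_{\rm t},\delta\be_{\rm n_1},\delta\be_{\rm n_2})$ whose source terms involve $\kappa_\ell^{(a)}-\kappa_\ell^{(b)}$ multiplied by frame vectors of curve $(b)$, together with $\X^{(a)}_{ss}-\X^{(b)}_{ss}$ entering through the definition of the $\kappa_j$. A Grönwall argument, starting from the initial frame differences bounded above and using the periodicity imposed by our choice of $\kappa_3$, yields $\norm{\be_{\rm n_j}^{(a)}-\be_{\rm n_j}^{(b)}}_{C^0(\T)} \le c\,\delta$. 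Since the right-hand side of \eqref{eq:frame} lies in $C^{0,\alpha}$ with bounds controlled by $\norm{\X}_{C^{2,\alpha}}$, the frame vectors are actually $C^{1,\alpha}$ in $s$; differentiating \eqref{eq:frame} in $s$ (or interpolating against the uniform $C^1$ bound on each frame) then upgrades this to $\norm{\be_{\rm n_j}^{(a)}-\be_{\rm n_j}^{(b)}}_{C^{0,\alpha}(\T)} \le c(\norm{\X^{(a)}}_{C^{2,\alpha}},\norm{\X^{(b)}}_{C^{2,\alpha}})\,\delta$. Writing
\[
\kappa_j^{(a)}-\kappa_j^{(b)} = (\X^{(a)}_{ss}-\X^{(b)}_{ss})\cdot\be_{\rm n_j}^{(a)} + \X^{(b)}_{ss}\cdot(\be_{\rm n_j}^{(a)}-\be_{\rm n_j}^{(b)})
\]
and applying the $C^{0,\alpha}$ product rule closes the bound for $j=1,2$.

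The main obstacle I foresee is controlling the closure constant $\kappa_3$: one must show both that the holonomy functional $\Theta$ is Lipschitz in $C^2$ \emph{and} that $\Theta(\X^{(a)})$ and $\Theta(\X^{(b)})$ lie in the same fundamental domain of $\R/2\pi\Z$, so that the truncation into $(-\pi,\pi]$ produces no $2\pi$ jump between the two curves. Both hold once $\delta$ is small enough relative to a threshold depending on $\norm{\X^{(a)}}_{C^2}$ and $\norm{\X^{(b)}}_{C^2}$, which is consistent with the "nearby" hypothesis of the lemma. Everything downstream is then standard linear ODE perturbation theory applied to \eqref{eq:frame}.
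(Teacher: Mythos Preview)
The paper does not actually prove this lemma here; it is quoted in Appendix~\ref{sec:Phi_comm} with a direct citation to \cite[Lemma 1.1]{ohm2024free}, so there is no in-paper argument to compare against. Your proposal is the natural construction one would expect in the cited reference: build a Bishop frame by solving \eqref{eq:frame} with $\kappa_3=0$, read off the holonomy $\Theta$, twist by the constant $\kappa_3=-\Theta$ to obtain a periodic frame, and then run Gr\"onwall on the difference of the two frame ODEs. The product-rule step $\kappa_j^{(a)}-\kappa_j^{(b)}=(\X_{ss}^{(a)}-\X_{ss}^{(b)})\cdot\be_{\rm n_j}^{(a)}+\X_{ss}^{(b)}\cdot(\be_{\rm n_j}^{(a)}-\be_{\rm n_j}^{(b)})$ closes the estimate exactly as you indicate, since the frame ODE immediately upgrades the $C^0$ frame-difference bound to $C^1\subset C^{0,\alpha}$.

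One refinement on the obstacle you flag: the statement ``both hold once $\delta$ is small enough relative to a threshold depending on $\|\X^{(a)}\|_{C^2}$ and $\|\X^{(b)}\|_{C^2}$'' is not quite right for the branch-cut issue, because whether $\Theta(\X^{(a)})$ sits near $\pm\pi$ depends on $\X^{(a)}$ itself, not merely on its norm. The clean fix is to note that the lemma only asks for frames satisfying \eqref{eq:frame} with $\kappa_3$ constant; the bound $|\kappa_3|\le\pi$ is recorded separately (from \cite{closed_loop}) and is not part of the hypothesis. So you may fix $\kappa_3^{(a)}\in(-\pi,\pi]$ and then choose $\kappa_3^{(b)}$ as the representative of $-\Theta(\X^{(b)})$ lying in $(\kappa_3^{(a)}-\pi,\kappa_3^{(a)}+\pi]$; this avoids any $2\pi$ jump at the cost of allowing $|\kappa_3^{(b)}|\le\pi+c\delta$, which is harmless downstream. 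With that adjustment your argument is complete.
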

Whenever the map $\Phi$ is needed for two sufficiently nearby filaments, we will always choose a parameterization satisfying Lemma \ref{lem:XaXb_C2beta}.

In particular, given such nearby filaments with centerlines $\X^{(a)}$ and $\X^{(b)}$, using the parameterization of Lemma \ref{lem:XaXb_C2beta}, it may be seen that the difference between the maps $\Phi^{(a)}$ and $\Phi^{(b)}$ satisfies
\begin{equation}\label{eq:Phi_lip_est}
\begin{aligned}
%
&\norm{(\Phi^{(a)})^{-1}\bm{h}_0^{\Phi^{(a)}}-(\Phi^{(b)})^{-1}\bm{h}_0^{\Phi^{(b)}}}_{C^{\ell,\alpha}} \\
&\hspace{3cm} \le c(\|\X^{(a)}\|_{C^{1+\ell,\alpha}},\|\X^{(b)}\|_{C^{1+\ell,\alpha}})\norm{\X^{(a)}-\X^{(b)}}_{C^{1+\ell,\alpha}}\norm{\bm{h}}_{C^{\ell,\alpha}} \,.
\end{aligned}
\end{equation}
Here the notation $\bm{h}_0^{\Phi^{(a)}}$ is as in \eqref{eq:subtract_mean}.
Furthermore, we may obtain the following commutator Lipschitz bounds:
\begin{equation}\label{eq:Phi_comm_lip_est}
\begin{aligned}
%
&\norm{\big[\p_s,(\Phi^{(a)})^{-1}\big]\bm{h}_0^{\Phi^{(a)}}-\big[\p_s,(\Phi^{(b)})^{-1}\big]\bm{h}_0^{\Phi^{(b)}}}_{C^{\ell,\alpha}}\\ 
&\hspace{3cm}\le c(\|\X^{(a)}\|_{C^{2+\ell,\alpha}},\|\X^{(b)}\|_{C^{2+\ell,\alpha}})\norm{\X^{(a)}-\X^{(b)}}_{C^{2+\ell,\alpha}}\norm{\bm{h}}_{C^{\ell,\alpha}} \,.
\end{aligned}
\end{equation}

\section{Proof of Lemma \ref{lem:TDP_auxlem}}\label{sec:TDP_auxlem_pf}
We begin by outlining the general proof strategy and tools. As in \cite[section 2]{ohm2025angle} and \cite[section 3]{ohm2024free}, we consider $C^{\ell,\alpha}(\T)$ as a subset of $C^{\ell,\alpha}(\R)$, which, using \cite[Theorem 2.36]{bahouri2011fourier}, we may characterize as the Besov space $B^{\ell+\alpha}_{\infty,\infty}(\R)$. 
Let $\phi(\xi)$ be a smooth cutoff function supported on the annulus $\{\frac{3}{4}\le \abs{\xi}\le \frac{8}{3}\}$ and satisfying $\sum_{j\in\Z}\phi(2^{-j}\xi)=1$. Let 
\begin{equation}\label{eq:dyadic_phi}
\phi_j(\xi) = \phi(2^{-j}\xi)
\end{equation}
and, given a function $h(s)$, $s\in\R$, let
\begin{equation}
 \mc{F}[h](\xi) = \int_{-\infty}^\infty h(s)\,e^{-2\pi i \xi s}\, ds 
\end{equation}
denote the Fourier transform of $h$. We denote the Littlewood-Paley projection $P_jh$ onto frequencies supported in annulus $j$ by
\begin{equation}\label{eq:littlewood_p}
\mc{F}[P_j h](\xi)  = \phi_j(\xi)\mc{F}[h](\xi)\,.
\end{equation}
We further denote
\begin{equation}
P_{\le j_*} := \sum_{j\le j_*} P_j \,.
\end{equation}
For $\nu\in \R$, we may then define the following Besov seminorm and norm:
\begin{equation}\label{eq:besov}
|h|_{\dot B^\nu_{\infty,\infty}} = \sup_{j\in \Z} 2^{j\nu}\norm{P_j h}_{L^\infty}\,, \qquad
\norm{h}_{B^\nu_{\infty,\infty}} =\sup\big(\norm{P_{\le 0} \,h}_{L^\infty}, \sup_{j> 0} \,2^{j\nu}\norm{P_j h}_{L^\infty}\big)\,.
\end{equation}

Now, given a Fourier multiplier $m(\xi)$ and a function $h(s)$, $s\in \R$, let 
\begin{equation}
T_m h = \mc{F}^{-1}[m\mc{F}[h]] \,, \qquad P_jT_m h = \mc{F}^{-1}[\phi_jm\mc{F}[h]] \,.
\end{equation}
Our general strategy for controlling the Besov norm \eqref{eq:besov} of the operator $T_m h$ proceeds as follows. Letting 
\begin{equation}
\wh M_j(\xi)= \phi_j(\xi)m(\xi)\,, \qquad M_j=\mc{F}^{-1}[\phi_jm]\,,
\end{equation}
we have that by \cite[Lemma 2.1]{ohm2025angle}, if 
\begin{equation}\label{eq:Linfty_Mj}
\abs{\wh M_j}\le A\,, \qquad
\abs{\p_\xi^2\wh M_j}\le B
\end{equation}
then
\begin{equation}\label{eq:L1_Mj}
\norm{M_j}_{L^1(\R)} \lesssim 2^j \sqrt{AB}\,.
\end{equation}
Noting that $P_jT_m h =M_j*h$, by Young's inequality for convolutions, we may then bound
\begin{equation}\label{eq:PjBound}
\norm{P_jT_m h}_{L^\infty(\R)}=\norm{M_j*h}_{L^\infty(\R)}\le \norm{M_j}_{L^1(\R)}\norm{h}_{L^\infty(\R)} 
\lesssim 2^j \sqrt{AB}\norm{h}_{L^\infty(\R)} \,.
\end{equation}
Thus, to show Lemma \ref{lem:TDP_auxlem}, our strategy will be to obtain bounds of the form \eqref{eq:Linfty_Mj} on the multipliers corresponding to $(I- \overline{\mc{L}}_\epsilon^{\rm tang}\p_{ss})^{-1}$ and $(I- \overline{\mc{L}}_\epsilon^{\rm tang}\p_{ss})^{-1}\p_s$.

We begin by defining
\begin{equation}
  m^{(1)}_\epsilon(\xi) = \frac{1}{1+(2\pi\xi)^2m_{\epsilon,\rm t}(\xi)}\,, \qquad
  m^{(2)}_\epsilon(\xi) = \frac{i2\pi\xi}{1+(2\pi\xi)^2m_{\epsilon,\rm t}(\xi)}
\end{equation}
where $m_{\epsilon,\rm t}$ is given by \eqref{eq:eigsT}.
By \cite[Lemma 3.4]{ohm2024free}, we have that $m_{\rm \epsilon,t}$ satisfies the bounds  
\begin{equation}\label{eq:mt_bds}
\begin{aligned}
\abs{\p_{\xi}^\ell m_{\epsilon,{\rm t}}^{-1}(\xi)} &\le 
\begin{cases}
c\,\epsilon\abs{\xi}^{1-\ell}\, , & \abs{\xi}\ge\frac{1}{2\pi\epsilon} \,, \\
c\,\abs{\log\epsilon}^{-1}\abs{\xi}^{-\ell}\,, & \abs{\xi}<\frac{1}{2\pi\epsilon} \,,
\end{cases} 
\quad \ell=0,1,2\,,\\
\abs{\p_{\xi}^\ell m_{\epsilon,{\rm t}}(\xi)} &\le 
\begin{cases}
c\,\epsilon^{-1} \abs{\xi}^{-\ell-1}\,, & \abs{\xi}\ge\frac{1}{2\pi\epsilon} \,, \\
c\,\abs{\log\epsilon}\abs{\xi}^{-\ell} \,, & \abs{\xi}<\frac{1}{2\pi\epsilon}\,,
\end{cases}
\quad \ell=0,1,2\,.
\end{aligned}
\end{equation}
From \eqref{eq:mt_bds}, we may then obtain the following bounds for $m^{(1)}_\epsilon(\xi)$ and $m^{(2)}_\epsilon(\xi)$: 
\begin{equation}\label{eq:meps_bds}
\begin{aligned}
\abs{\p_{\xi}^\ell m_\epsilon^{(1)}(\xi)} &\le 
\begin{cases}
c\,\epsilon\abs{\xi}^{-1-\ell}\, , & \abs{\xi}\ge\frac{1}{2\pi\epsilon} \,, \\
c\,\abs{\xi}^{-\ell}(1+\abs{\xi}^2\abs{\log\epsilon})^{-1}\,, & \abs{\xi}<\frac{1}{2\pi\epsilon} \,,
\end{cases} 
\quad \ell=0,1,2\,,\\
\abs{\p_{\xi}^\ell m_\epsilon^{(2)}(\xi)} &\le 
\begin{cases}
c\,\epsilon\abs{\xi}^{-\ell}\,, & \abs{\xi}\ge\frac{1}{2\pi\epsilon} \,, \\
c\,\abs{\xi}^{1-\ell}(1+\abs{\xi}^2\abs{\log\epsilon})^{-1} \,, & \abs{\xi}<\frac{1}{2\pi\epsilon}\,,
\end{cases}
\quad \ell=0,1,2\,.
\end{aligned}
\end{equation}
We next define the frequency-localized Fourier multipliers 
\begin{equation}
  \wh{M}_j^{(1)}(\xi)= \phi_j(\xi)m^{(1)}_\epsilon(\xi) \,, \qquad
  \wh{M}_j^{(2)}(\xi)= \phi_j(\xi)m^{(2)}_\epsilon(\xi)\,,
\end{equation}
where $\phi_j$ is as in \eqref{eq:dyadic_phi}.
Letting $j_\epsilon = \frac{\abs{\log(2\pi\epsilon)}}{\log(2)}$, by \eqref{eq:meps_bds}, we then have 
\begin{equation}
\begin{aligned}
  \norm{\p_\xi^\ell \wh{M}_j^{(1)}}_{L^\infty} &\le \begin{cases}
    c\,\epsilon \,2^{-j(\ell+1)}\,, & j\ge j_\epsilon\\
    c\,2^{-j\ell}(1+2^{2j}\abs{\log\epsilon})^{-1}\,, & j<j_\epsilon\,,
  \end{cases} 
  \quad \ell=0,1,2\,, \\
  \norm{\p_\xi^\ell \wh{M}_j^{(2)}}_{L^\infty} &\le \begin{cases}
    c\,\epsilon \,2^{-j\ell}\,, & j\ge j_\epsilon\\
    c\,2^{j(1-\ell)}(1+2^{2j}\abs{\log\epsilon})^{-1}\,, & j<j_\epsilon\,,
  \end{cases}
  \quad \ell=0,1,2\,.
\end{aligned}
\end{equation}
Using \eqref{eq:L1_Mj}, we obtain the physical space estimates
\begin{equation}\label{eq:phys_ests}
\begin{aligned}
  \norm{M_j^{(1)}}_{L^1} &\le \begin{cases}
    c\,\epsilon\,2^{-j}\,, & j\ge j_\epsilon\\
    c\,(1+2^{2j}\abs{\log\epsilon})^{-1} \,, & j<j_\epsilon\,,
  \end{cases}\\
  \norm{M_j^{(2)}}_{L^1} &\le \begin{cases}
    c\,\epsilon \,, & j\ge j_\epsilon\\
    c\,2^j(1+2^{2j}\abs{\log\epsilon})^{-1}\,, & j<j_\epsilon\,.
  \end{cases}
\end{aligned}
\end{equation}

Now, for $h(s)\in C^{0,\alpha}(\T)$, we may use the dyadic partition of unity \eqref{eq:dyadic_phi} to write
\begin{equation}
\begin{aligned}
  (I-\overline{\mc{L}}_\epsilon^{\rm tang}\p_{ss})^{-1}[h] &= T_{m^{(1)}_\epsilon}h = \sum_jP_jT_{m^{(1)}_\epsilon}h\\
  (I-\overline{\mc{L}}_\epsilon^{\rm tang}\p_{ss})^{-1}\p_s[h] &= T_{m^{(2)}_\epsilon}h = \sum_jP_jT_{m^{(2)}_\epsilon}h\,.
\end{aligned}
\end{equation}
We will estimate $T_{m^{(1)}_\epsilon}h$ and $T_{m^{(2)}_\epsilon}h$ using \eqref{eq:PjBound}.

We begin with the lowest frequencies. Since we are now working on the torus $\T$, we have that $P_{\le 0}T_{m^{(1)}_\epsilon}h = P_0T_{m^{(1)}_\epsilon}h$, and we may estimate 
\begin{equation}
  \norm{P_{\le 0}T_{m^{(1)}_\epsilon}h}_{L^\infty} = \norm{M_0^{(1)}*h}_{L^\infty} 
  \le \norm{M_0^{(1)}}_{L^1}\norm{h}_{L^\infty}
  \le c(\epsilon)\norm{h}_{L^\infty}\,.
\end{equation}
Similarly, $P_{\le 0}T_{m^{(1)}_\epsilon}h$ may be estimated as
\begin{equation}
  \norm{P_{\le 0}T_{m^{(2)}_\epsilon}h}_{L^\infty} = \norm{M_0^{(2)}*h}_{L^\infty} 
  \le \norm{M_0^{(2)}}_{L^1}\norm{h}_{L^\infty}
  \le c(\epsilon)\norm{h}_{L^\infty}\,.
\end{equation}
For frequencies  $j>0$, we will make use of the fattened frequency projection $\wt P_j=P_{j-1}+P_j+P_{j+1}$.
We first consider low frequencies $0<j<j_\epsilon$. Using \eqref{eq:phys_ests}, we may estimate
\begin{equation}
\begin{aligned}
  \sup_{0<j< j_\epsilon} 2^{j(\ell+2+\alpha)}\norm{P_jT_{m^{(1)}_\epsilon}h}_{L^\infty} 
  &= \sup_{0<j< j_\epsilon} 2^{j(\ell+2+\alpha)}\norm{P_jT_{m^{(1)}_\epsilon}\wt P_jh}_{L^\infty}\\
  &=\sup_{0<j< j_\epsilon} 2^{j(\ell+2+\alpha)}\norm{M_j^{(1)}*(\wt P_jh)}_{L^\infty} \\
  &\le \sup_{0<j< j_\epsilon} 2^{2j}\norm{M_j^{(1)}}_{L^1}2^{j(\ell+\alpha)}\norm{\wt P_jh}_{L^\infty}
  \le c(\epsilon)\abs{h}_{\dot B^{\ell+\alpha}_{\infty,\infty}}\,,
  \\
  \sup_{0<j< j_\epsilon} 2^{j(\ell+1+\alpha)}\norm{P_jT_{m^{(2)}_\epsilon}h}_{L^\infty}  &= \sup_{0<j< j_\epsilon} 2^{j(\ell+1+\alpha)}\norm{P_jT_{m^{(2)}_\epsilon}\wt P_jh}_{L^\infty}\\
  &=\sup_{0<j< j_\epsilon} 2^{j(\ell+1+\alpha)}\norm{M_j^{(2)}*(\wt P_jh)}_{L^\infty} \\
  &\le \sup_{0<j< j_\epsilon} 2^j\norm{M_j^{(2)}}_{L^1}2^{j(\ell+\alpha)}\norm{\wt P_jh}_{L^\infty}
  \le c(\epsilon)\abs{h}_{\dot B^{\ell+\alpha}_{\infty,\infty}}\,.
\end{aligned}
\end{equation}
Finally, for high frequencies $j\ge j_\epsilon$, using \eqref{eq:phys_ests}, we may estimate
\begin{equation}
\begin{aligned}
  \sup_{j\ge j_\epsilon} 2^{j(\ell+1+\alpha)}\norm{P_jT_{m^{(1)}_\epsilon}h}_{L^\infty} &\le \sup_{j\ge j_\epsilon} 2^j\norm{M_j^{(1)}}_{L^1} 2^{j(\ell+\alpha)}\norm{\wt P_jh}_{L^\infty}
  \le c\,\epsilon\abs{h}_{\dot B^{\ell+\alpha}_{\infty,\infty}} \\
  \sup_{j\ge j_\epsilon} 2^{j(\ell+\alpha)}\norm{P_jT_{m^{(2)}_\epsilon}h}_{L^\infty} &\le 
  \sup_{j\ge j_\epsilon} \norm{M_j^{(2)}}_{L^1}2^{j(\ell+\alpha)}\norm{\wt P_jh}_{L^\infty}
  \le c\,\epsilon\abs{h}_{\dot B^{\ell+\alpha}_{\infty,\infty}}\,.
\end{aligned}
\end{equation}
We define the operators $\mc{M}_{1,+}$ and $\mc{M}_{2,+}$ in \eqref{eq:M1defs}, \eqref{eq:M2defs} as the sum of low frequencies 
\begin{equation}
  \mc{M}_{1,+}[h] = \sum_{j< j_\epsilon} P_jT_{m^{(1)}_\epsilon}h\,, \quad
  \mc{M}_{2,+}[h] = \sum_{j< j_\epsilon} P_jT_{m^{(2)}_\epsilon}h\,,
\end{equation}
and similarly define
\begin{equation}
  \mc{M}_{1,\epsilon}[h] = \sum_{j\ge j_\epsilon} P_jT_{m^{(1)}_\epsilon}h\,, \quad
  \mc{M}_{2,\epsilon}[h] = \sum_{j\ge j_\epsilon} P_jT_{m^{(2)}_\epsilon}h\,.
\end{equation}
Using the equivalence of the $B^{\nu}_{\infty,\infty}$ \eqref{eq:besov} and $C^{\lfloor\nu\rfloor,\nu-\lfloor\nu\rfloor}$ norms, we obtain Lemma \ref{lem:TDP_auxlem}.

\section{Proof of semigroup mapping properties}\label{sec:h4alpha}
Here we complete the proofs of Lemmas \ref{lem:semigroup} and \ref{lem:max_reg}. Note that the bounds \eqref{eq:semigroup} and \eqref{eq:max_semi} were established in \cite[Lemmas 2.2 and 2.3]{ohm2024free}, respectively. It only remains to verify the assertions about little H\"older spaces $h^{k,\alpha}$.

We complete the proof of Lemma \ref{lem:max_reg} in detail; the proof of the analogous claim in Lemma \ref{lem:semigroup} for time-independent $\bm{V}(s)$ follows by nearly identical arguments. Let $\mc{U}$ denote the map
\begin{equation}
\mc{U}[\bm{g}](s,t):= \int_0^t e^{-(t-t')\overline{\mc{L}}_\epsilon\p_s^4}\;\bm{g}(s,t')\,dt'\,.
\end{equation}
By \eqref{eq:max_semi}, we have that $\mc{U}$ maps $L^\infty(0,T;C^{1,\beta}(\T))$ to $L^\infty(0,T;C^{4,\beta}(\T))$ for $\beta\in(0,1)$.
Given $\bm{g}\in \mc{Y}_1=C([0,T];h^{1,\alpha}(\T))$, we may approximate $\bm{g}$, e.g. by mollification, by smoother functions $\bm{\varphi}\in L^\infty(0,T;C^{1,\alpha+\delta})$, $\alpha<\alpha+\delta<1$. For any such $\bm{\varphi}$, we have $\mc{U}[\bm{\varphi}]\in L^\infty(0,T;C^{4,\alpha+\delta})\subset L^\infty(0,T;h^{4,\alpha})$. We aim to show that $\mc{U}[\bm{\varphi}]\in C([0,T];C^{4,\alpha+\delta})$, i.e., for any $\nu>0$, there is some $\eta>0$ such that if $\abs{t-t'}<\eta$, then 
\begin{equation}
  \norm{\mc{U}[\bm{\varphi}](\cdot,t)-\mc{U}[\bm{\varphi}](\cdot,t')}_{C^{4,\alpha}(\T)}<\nu\,.
\end{equation}
We will use the equivalence of the $C^{4,\alpha}$ norm and the Besov $B^{4+\alpha}_{\infty,\infty}$ norm \eqref{eq:besov}. Recalling the notation $P_j$ \eqref{eq:littlewood_p} for the Littlewood-Paley projection, we first note that 
\begin{equation}
  \norm{\mc{U}[\bm{\varphi}]}_{B^{4+\alpha+\delta}_{\infty,\infty}(\T)}=\sup \bigg(\norm{P_{\le 0}\,\mc{U}[\bm{\varphi}]}_{L^\infty(\T)},\sup_{j>0}2^{j(4+\alpha+\delta)}\norm{P_j\mc{U}[\bm{\varphi}]}_{L^\infty(\T)}\bigg)<\infty\,.
\end{equation}
For $j\ge 0$, we then have
\begin{equation}
\begin{aligned}
  2^{j(4+\alpha)}\norm{P_j\mc{U}[\bm{\varphi}](\cdot,t)-P_j\mc{U}[\bm{\varphi}](\cdot,t')}_{L^\infty(\T)}
  &\le 2^{-j\delta}\sup_{t\in[0,T]}\norm{\mc{U}[\bm{\varphi}]}_{B^{4+\alpha+\delta}_{\infty,\infty}(\T)}\,.
\end{aligned}
\end{equation}
Furthermore, for any fixed $j$, we have that 
$\norm{P_j\mc{U}[\bm{\varphi}]}_{C^{4,\alpha}(\T)}=\norm{\mc{U}[P_j\bm{\varphi}]}_{C^{4,\alpha}(\T)}$ is uniformly continuous in time, i.e.
\begin{equation}
  \norm{P_j\mc{U}[\bm{\varphi}](\cdot,t)-P_j\mc{U}[\bm{\varphi}](\cdot,t')}_{B^{4+\alpha}_{\infty,\infty}(\T)} \le c_j\abs{t-t'}\,.
\end{equation}

Given $\nu>0$, we may first choose $K\in\Z$ large enough that
\begin{equation}
  2^{-K\delta}\sup_{t\in[0,T]}\norm{\mc{U}[\bm{\varphi}]}_{B^{4+\alpha+\delta}_{\infty,\infty}(\T)}< \nu\,.
\end{equation}
Then, noting that we have $P_{\le 0}=P_0$ on $\T$, we may take 
\begin{equation}
  \abs{t-t'} < \frac{\nu}{\max_{0\le j\le K}c_j}\,.
\end{equation}
Altogether, we obtain
\begin{equation}
  \norm{\mc{U}[\bm{\varphi}](\cdot,t)-\mc{U}[\bm{\varphi}](\cdot,t')}_{B^{4+\alpha}_{\infty,\infty}(\T)}<\nu\,.
\end{equation}

Finally, since $\bm{g}\in \mc{Y}_1$ may be approximated by functions $\bm{\varphi}\in L^\infty(0,T;C^{1,\alpha+\delta})$ satisfying $\mc{U}[\bm{\varphi}]\in C([0,T];C^{4,\alpha+\delta})$, by density we obtain $\mc{U}[\bm{g}]\in C([0,T];h^{4,\alpha})$.


\subsubsection*{Acknowledgments} LO acknowledges support from NSF grant DMS-2406003 and from the Wisconsin Alumni Research Foundation.


\bibliographystyle{abbrv} 
\bibliography{StokesBib}

\begin{thebibliography}{10}

\bibitem{alazard2014cauchy}
T.~Alazard, N.~Burq, and C.~Zuily.
\newblock On the cauchy problem for gravity water waves.
\newblock {\em Inventiones mathematicae}, 198(1):71--163, 2014.

\bibitem{alazard2020paralinearization}
T.~Alazard and O.~Lazar.
\newblock Paralinearization of the muskat equation and application to the
  cauchy problem.
\newblock {\em Archive for Rational Mechanics and Analysis}, 237(2):545--583,
  2020.

\bibitem{alazard2009paralinearization}
T.~Alazard and G.~M{\'e}tivier.
\newblock Paralinearization of the dirichlet to neumann operator, and
  regularity of three-dimensional water waves.
\newblock {\em Communications in Partial Differential Equations},
  34(12):1632--1704, 2009.

\bibitem{albritton2025rods}
D.~Albritton and L.~Ohm.
\newblock Rods in flows: the pde theory of immersed elastic filaments.
\newblock {\em arXiv preprint arXiv:2503.14440}, 2025.

\bibitem{NorwayPoF}
H.~I. Andersson, E.~Celledoni, L.~Ohm, B.~Owren, and B.~K. Tapley.
\newblock An integral model based on slender body theory, with applications to
  curved rigid fibers.
\newblock {\em Physics of Fluids}, 33(4):041904, 2021.

\bibitem{bahouri2011fourier}
H.~Bahouri, J.-Y. Chemin, and R.~Danchin.
\newblock {\em Fourier Analysis and Nonlinear Partial Differential Equations}.
\newblock Springer, 2011.

\bibitem{batchelor1970slender}
G.~Batchelor.
\newblock Slender-body theory for particles of arbitrary cross-section in
  {S}tokes flow.
\newblock {\em J. Fluid Mech.}, 44(3):419--440, 1970.

\bibitem{beale1993growth}
J.~T. Beale, T.~Y. Hou, and J.~S. Lowengrub.
\newblock Growth rates for the linearized motion of fluid interfaces away from
  equilibrium.
\newblock {\em Communications on Pure and Applied Mathematics},
  46(9):1269--1301, 1993.

\bibitem{camalet2000generic}
S.~Camalet and F.~J{\"u}licher.
\newblock Generic aspects of axonemal beating.
\newblock {\em New Journal of Physics}, 2(1):24, 2000.

\bibitem{camalet1999self}
S.~Camalet, F.~J{\"u}licher, and J.~Prost.
\newblock Self-organized beating and swimming of internally driven filaments.
\newblock {\em Physical review letters}, 82(7):1590, 1999.

\bibitem{cameron2024critical}
S.~Cameron and R.~M. Strain.
\newblock Critical local well-posedness for the fully nonlinear peskin problem.
\newblock {\em Communications on Pure and Applied Mathematics}, 77(2):901--989,
  2024.

\bibitem{chen2023peskin}
K.~Chen and Q.-H. Nguyen.
\newblock The peskin problem with initial data.
\newblock {\em SIAM Journal on Mathematical Analysis}, 55(6):6262--6304, 2023.

\bibitem{cortez2012slender}
R.~Cortez and M.~Nicholas.
\newblock Slender body theory for {S}tokes flows with regularized forces.
\newblock {\em Commun. Appl. Math. Comput. Sci.}, 7(1):33--62, 2012.

\bibitem{cox1970motion}
R.~Cox.
\newblock The motion of long slender bodies in a viscous fluid part 1. general
  theory.
\newblock {\em J. Fluid Mech.}, 44(4):791--810, 1970.

\bibitem{EulerOriginal}
L.~Euler.
\newblock Methodus inveniendi curvas lineas maximi minimive proprietate
  gaudentes sive solution problematis isometrici latissimo sensu accepti.
\newblock 1744.

\bibitem{flynn2021vanishing}
P.~T. Flynn and H.~Q. Nguyen.
\newblock The vanishing surface tension limit of the muskat problem.
\newblock {\em Communications in Mathematical Physics}, 382:1205--1241, 2021.

\bibitem{gancedo2020global}
F.~Gancedo, R.~Granero-Belinch{\'o}n, and S.~Scrobogna.
\newblock Global existence in the lipschitz class for the n-peskin problem.
\newblock {\em arXiv preprint arXiv:2011.02294}, 2020.

\bibitem{garcia2023critical}
E.~Garc{\'\i}a-Ju{\'a}rez and S.~V. Haziot.
\newblock Critical well-posedness for the 2d peskin problem with general
  tension.
\newblock {\em arXiv preprint arXiv:2311.10157}, 2023.

\bibitem{garcia2025immersed}
E.~Garc{\'\i}a-Ju{\'a}rez, P.-C. Kuo, and Y.~Mori.
\newblock The immersed inextensible interface problem in 2d stokes flow.
\newblock {\em SIAM Journal on Mathematical Analysis}, 57(4):3454--3487, 2025.

\bibitem{garcia2023peskin}
E.~Garc{\'\i}a-Ju{\'a}rez, Y.~Mori, and R.~M. Strain.
\newblock The peskin problem with viscosity contrast.
\newblock {\em Analysis \& PDE}, 16(3):785--838, 2023.

\bibitem{gotz2000interactions}
T.~G{\"o}tz.
\newblock {\em Interactions of fibers and flow: asymptotics, theory and
  numerics}.
\newblock Doctoral dissertation, University of Kaiserslautern, 2000.

\bibitem{gray1955propulsion}
J.~Gray and G.~Hancock.
\newblock The propulsion of sea-urchin spermatozoa.
\newblock {\em Journal of Experimental Biology}, 32(4):802--814, 1955.

\bibitem{hines1978bend}
M.~Hines and J.~Blum.
\newblock Bend propagation in flagella. i. derivation of equations of motion
  and their simulation.
\newblock {\em Biophysical Journal}, 23(1):41--57, 1978.

\bibitem{hou1994removing}
T.~Y. Hou, J.~S. Lowengrub, and M.~J. Shelley.
\newblock Removing the stiffness from interfacial flows with surface tension.
\newblock {\em Journal of Computational Physics}, 114(2):312--338, 1994.

\bibitem{johnson1979flagellar}
R.~Johnson and C.~Brokaw.
\newblock Flagellar hydrodynamics. a comparison between resistive-force theory
  and slender-body theory.
\newblock {\em Biophysical journal}, 25(1):113--127, 1979.

\bibitem{johnson1980improved}
R.~E. Johnson.
\newblock An improved slender-body theory for {S}tokes flow.
\newblock {\em Journal of Fluid Mechanics}, 99(02):411--431, 1980.

\bibitem{keller1976swimming}
J.~B. Keller and S.~Rubinow.
\newblock Swimming of flagellated microorganisms.
\newblock {\em Biophysical Journal}, 16(2):151--170, 1976.

\bibitem{keller1976slender}
J.~B. Keller and S.~I. Rubinow.
\newblock Slender-body theory for slow viscous flow.
\newblock {\em Journal of Fluid Mechanics}, 75(4):705--714, 1976.

\bibitem{koiso1996motion}
N.~Koiso.
\newblock On the motion of a curve towards elastica.
\newblock {\em Actes de la Table Ronde de G{\'e}om{\'e}trie Diff{\'e}rentielle
  (Luminy, 1992)}, 1:403--436, 1996.

\bibitem{kuo2023tension}
P.-C. Kuo, M.-C. Lai, Y.~Mori, and A.~Rodenberg.
\newblock The tension determination problem for an inextensible interface in 2d
  stokes flow.
\newblock {\em Research in the Mathematical Sciences}, 10(4):46, 2023.

\bibitem{lannes2005well}
D.~Lannes.
\newblock Well-posedness of the water-waves equations.
\newblock {\em Journal of the American Mathematical Society}, 18(3):605--654,
  2005.

\bibitem{lauga2009hydrodynamics}
E.~Lauga and T.~R. Powers.
\newblock The hydrodynamics of swimming microorganisms.
\newblock {\em Reports on Progress in Physics}, 72(9):096601, 2009.

\bibitem{levien2008elastica}
R.~Levien.
\newblock The elastica: a mathematical history.
\newblock Technical report, Technical Report No. UCB/EECS-2008-103, 2008.

\bibitem{li2013sedimentation}
L.~Li, H.~Manikantan, D.~Saintillan, and S.~E. Spagnolie.
\newblock The sedimentation of flexible filaments.
\newblock {\em J. Fluid Mech.}, 735:705--736, 2013.

\bibitem{lin2019solvability}
F.-H. Lin and J.~Tong.
\newblock Solvability of the stokes immersed boundary problem in two
  dimensions.
\newblock {\em Communications on Pure and Applied Mathematics}, 72(1):159--226,
  2019.

\bibitem{matsutani2010euler}
S.~Matsutani.
\newblock Euler's elastica and beyond.
\newblock {\em Journal of Geometry and Symmetry in Physics}, 17:45--86, 2010.

\bibitem{maxian2021integral}
O.~Maxian, A.~Mogilner, and A.~Donev.
\newblock Integral-based spectral method for inextensible slender fibers in
  stokes flow.
\newblock {\em Physical Review Fluids}, 6(1):014102, 2021.

\bibitem{maxian2022hydrodynamics}
O.~Maxian, B.~Sprinkle, C.~S. Peskin, and A.~Donev.
\newblock Hydrodynamics of a twisting, bending, inextensible fiber in stokes
  flow.
\newblock {\em Physical Review Fluids}, 7(7):074101, 2022.

\bibitem{moreau2025n}
C.~Moreau, F.~Alouges, A.~Lefebvre-Lepot, and J.~Levillain.
\newblock The n-link model for slender rods in a viscous fluid: well-posedness
  and convergence to classical elastohydrodynamics equations.
\newblock {\em HAL Id: hal-04944051}, 2025.

\bibitem{inverse}
Y.~Mori and L.~Ohm.
\newblock Accuracy of slender body theory in approximating force exerted by
  thin fiber on viscous fluid.
\newblock {\em Studies in Applied Mathematics}, 2021.

\bibitem{mori2023well}
Y.~Mori and L.~Ohm.
\newblock Well-posedness and applications of classical elastohydrodynamics for
  a swimming filament.
\newblock {\em Nonlinearity}, 36(3):1799, 2023.

\bibitem{closed_loop}
Y.~Mori, L.~Ohm, and D.~Spirn.
\newblock Theoretical justification and error analysis for slender body theory.
\newblock {\em Communications on Pure and Applied Mathematics},
  73(6):1245--1314, 2020.

\bibitem{free_ends}
Y.~Mori, L.~Ohm, and D.~Spirn.
\newblock Theoretical justification and error analysis for slender body theory
  with free ends.
\newblock {\em Archive for Rational Mechanics and Analysis}, 235(3):1905--1978,
  2020.

\bibitem{mori2019well}
Y.~Mori, A.~Rodenberg, and D.~Spirn.
\newblock Well-posedness and global behavior of the peskin problem of an
  immersed elastic filament in stokes flow.
\newblock {\em Communications on Pure and Applied Mathematics}, 72(5):887--980,
  2019.

\bibitem{nguyen2020paradifferential}
H.~Q. Nguyen and B.~Pausader.
\newblock A paradifferential approach for well-posedness of the muskat problem.
\newblock {\em Archive for Rational Mechanics and Analysis}, 237(1):35--100,
  2020.

\bibitem{oelz2011curve}
D.~{\"O}elz.
\newblock On the curve straightening flow of inextensible, open, planar curves.
\newblock {\em SeMA Journal}, 54(1):5--24, 2011.

\bibitem{ohm2024free}
L.~Ohm.
\newblock A free boundary problem for an immersed filament in 3d stokes flow.
\newblock {\em arXiv preprint arXiv:2404.04737}, 2024.

\bibitem{ohm2025angle}
L.~Ohm.
\newblock On an angle-averaged neumann-to-dirichlet map for thin filaments.
\newblock {\em Archive for Rational Mechanics and Analysis}, 249(1):8, 2025.

\bibitem{pironneau1974optimal}
O.~Pironneau and D.~Katz.
\newblock Optimal swimming of flagellated micro-organisms.
\newblock {\em Journal of Fluid Mechanics}, 66(2):391--415, 1974.

\bibitem{pozrikidis1992boundary}
C.~Pozrikidis.
\newblock {\em Boundary integral and singularity methods for linearized viscous
  flow}.
\newblock Cambridge University Press, 1992.

\bibitem{shelley2000stokesian}
M.~J. Shelley and T.~Ueda.
\newblock The stokesian hydrodynamics of flexing, stretching filaments.
\newblock {\em Physica D: Nonlinear Phenomena}, 146(1-4):221--245, 2000.

\bibitem{spagnolie2011comparative}
S.~E. Spagnolie and E.~Lauga.
\newblock Comparative hydrodynamics of bacterial polymorphism.
\newblock {\em Phys. Rev. Lett.}, 106(5):058103, 2011.

\bibitem{tong2021regularized}
J.~Tong.
\newblock Regularized stokes immersed boundary problems in two dimensions:
  Well-posedness, singular limit, and error estimates.
\newblock {\em Communications on Pure and Applied Mathematics}, 74(2):366--449,
  2021.

\bibitem{tong2023geometric}
J.~Tong and D.~Wei.
\newblock Geometric properties of the 2-d peskin problem.
\newblock {\em arXiv preprint arXiv:2304.09556}, 2023.

\bibitem{tornberg2006numerical}
A.-K. Tornberg and K.~Gustavsson.
\newblock A numerical method for simulations of rigid fiber suspensions.
\newblock {\em J. Comput. Phys.}, 215(1):172--196, 2006.

\bibitem{tornberg2004simulating}
A.-K. Tornberg and M.~J. Shelley.
\newblock Simulating the dynamics and interactions of flexible fibers in stokes
  flows.
\newblock {\em Journal of Computational Physics}, 196(1):8--40, 2004.

\bibitem{wiggins1998flexive}
C.~H. Wiggins and R.~E. Goldstein.
\newblock Flexive and propulsive dynamics of elastica at low reynolds number.
\newblock {\em Physical Review Letters}, 80(17):3879, 1998.

\end{thebibliography}

\end{document}